\newtheorem{theo}{Theorem}[section]
\newtheorem{lemma}[theo]{Lemma}
\newtheorem{defi}[theo]{Definition}
\newtheorem{prop}[theo]{Proposition}
\newtheorem{cor}[theo]{Corollary}
\newtheorem{remark}[theo]{Remark}
\newtheorem{example}[theo]{Example}
\numberwithin{equation}{section}
\def\Qcoh{\operatorname{Qcoh}}
\def\lto{\longrightarrow}
\def\bR{{\mathbf R}}
\def\bL{{\mathbf L}}
\def\pre-tr{\operatorname{pre-tr}}
\def\Hom{\operatorname{Hom}}
\def\End{\operatorname{End}}
\newcommand{\bbA}{{\mathbb A}}
\newcommand{\bbZ}{{\mathbb Z}}
\newcommand{\bbP}{{\mathbb P}}
\newcommand{\bbN}{{\mathbb N}}
\newcommand{\cY}{{\mathcal Y}}
\newcommand{\cO}{{\mathcal O}}
\newcommand{\cM}{{\mathcal M}}
\newcommand{\cD}{{\mathcal D}}
\newcommand{\cA}{{\mathcal A}}
\newcommand{\cB}{{\mathcal B}}
\newcommand{\cC}{{\mathcal C}}
\newcommand{\cE}{{\mathcal E}}
\newcommand{\cU}{{\mathcal U}}
\newcommand{\cS}{{\mathcal S}}
\newcommand{\cT}{{\mathcal T}}
\newcommand{\cH}{{\mathcal H}}
\newcommand{\cHom}{{\mathcal Hom}}
\newcommand{\cX}{{\mathcal X}}
\newcommand{\Perf}{\operatorname{Perf}}
\newcommand{\supp}{\operatorname{Supp}}
\newcommand{\Ker}{\operatorname{Ker}}
\newcommand{\Ext}{\operatorname{Ext}}
\newcommand{\Id}{\operatorname{Id}}
\newcommand{\Ind}{\operatorname{Ind}}
\newcommand{\Res}{\operatorname{Res}}
\newcommand{\id}{\operatorname{id}}
\newcommand{\dg}{\operatorname{dg}}
\newcommand{\Mod}{\operatorname{Mod}}
\newcommand{\op}{\operatorname{op}}
\newcommand{\red}{\operatorname{red}}
\newcommand{\ns}{\operatorname{ns}}
\newcommand{\sg}{\operatorname{sg}}
\newcommand{\hf}{\operatorname{hf}}
\newcommand{\chf}{\operatorname{chf}}
\newcommand{\Fid}{\operatorname{Fid}}
\newcommand{\pt}{\operatorname{pt}}
\newcommand{\lhf}{\operatorname{lhf}}
\newcommand{\lchf}{\operatorname{lchf}}
\newcommand{\Coker}{\operatorname{Coker}}
\newcommand{\qc}{\operatorname{qc}}
\newcommand{\Inj}{\operatorname{Inj}}
\newcommand{\noeth}{\operatorname{noeth}}
\title[Categorical resolution of singularities]
{Categorical resolution of singularities}
\author{Valery A.~Lunts}
\address{Department of Mathematics, Indiana University,
Bloomington, IN 47405, USA} \email{vlunts@indiana.edu}
\begin{document}

\begin{abstract} Building on the concept of a smooth DG algebra we
define the notion of a smooth derived category. We then propose the
definition of a categorical resolution of singularities. Our main
example is the derived category $D(X)$ of quasi-coherent sheaves on
a scheme $X$. We prove that $D(X)$ has a canonical categorical
resolution if the base field is perfect and $X$ is a separated
scheme of finite type with a dualizing complex.
\end{abstract}

\thanks{This research was supported in part by NSF grant 48-294-16.}

\maketitle

\tableofcontents

\section{Introduction}

There is a good notion of smoothness for DG algebras. Namely, a DG
algebra $A$ is smooth if it is perfect as a DG $A^{\op}\otimes
A$-module. If $A$ is derived equivalent to a DG algebra $B$ then $A$
is smooth if and only if $B$ is such. Therefore it makes sense to
define smoothness of the derived category $D(A)$ of DG $A$-modules.
This also allows one to discuss smoothness of cocomplete
triangulated categories $T$ which have a compact generator (and come
from a DG category). For example $T$ may be the derived category of
quasi-coherent sheaves on a quasi-compact separated scheme. If $k$
is a perfect field and $X$ is a separated $k$-scheme essentially of
finite type, then $X$ is regular if and only if the category
$D(X)=D(QcohX)$ is smooth.

For any DG algebra $B$ one may view the full subcategory $\Perf
(B)\subset D(B)$ as a "dense smooth subcategory" of $D(B).$ So it is
natural to define (Definition 4.1) a {\it categorical resolution} of
$D(B)$ as a pair $(A,X)$, where $A$ is a smooth DG algebra and $X$
is a DG $B^{\op}\otimes A$-module such that the restriction of the
functor
$$(-)\stackrel{\bL}{\otimes }_BX:D(B) \to D(A)$$
to the subcategory $\Perf(B)$ is full and faithful.

In this paper we give examples of categorical resolutions. In
particular we show that the Koszul duality functor is sometimes a
categorical resolution (Proposition 5.6).

Our main example is the derived category $D(X)$ of quasi-coherent
sheaves on a scheme $X.$ If $\tilde{X}\stackrel{\pi}{\to}X$ is the
usual resolution of singularities, then $\bL \pi ^*:D(X)\to
D(\tilde{X})$ is a categorical resolution if and only if $X$ has
rational singularities. This may suggest that our definition of
categorical resolution is not the right one. However we believe that
this definition still makes sense and that a categorical resolution
of $D(X)$ may in a sense be "better" than the usual $D(\tilde{X}).$
(For example a categorical resolution of $D(X)$ exists for many
nonreduced schemes $X$.)

We show that if $k$ is a perfect field, then for any separated
$k$-scheme $X$ of finite type that has a dualizing complex there
exists a categorical resolution (Theorem 6.3). The corresponding
"resolving" smooth DG algebra $A$ is derived equivalent to
$A^{\op},$ but usually has unbounded cohomology. This is a canonical
categorical resolution of $D(X);$ it has the flavor of Koszul
duality. (After this paper was written we learned that the
smoothness of this DG algebra $A$ was conjectured by Kontsevich.) It
was pointed to us by Van den Bergh that our result implies the
smoothness of the unbounded homotopy category of injectives $K(\Inj
X)$ which was studied by Krause in [Kr]. We discuss this in the last
section.

In a forthcoming paper [Lu2] we propose categorical resolutions of
$D(X)$ of a different kind. Namely we construct new smooth
categories by "glueing" smooth schemes. This is an extension of the
work [Lu1].

It is our pleasure to thank Michel Van den Bergh, Mike Mandell,
Bernhard Keller and Michael Artin for answering many question. We
are also grateful to participants of the seminar on Algebraic
Varieties at the Steklov Institute, where these ideas were
presented. Dmitri Orlov pointed out to me the results in [Rou] and
Dmitri Kaledin informed me of the paper [Ku] in which a similar
notion appears but the approach is different. Alexander Kuznetsov
drew my attention to the recent preprint [BuDr], where a categorical
resolution is constructed for projective curves with only nodes and
cusps as singularities. (As is pointed out in [BuDr], in some cases
this resolution coincides with the one constructed in [Lu1].)

After our talk in Banff in October 2008 Osamu Iyama suggested a
connection with Auslander algebras, but we did not work it out in
this paper.

\section{Triangulated categories, DG categories, compact object}

This section contains some preliminaries.

Fix a field $k.$ All categories are assumed to be $k$-linear and
$\otimes $ means $\otimes _k$ unless mentioned otherwise.

\subsection{Generation of triangulated categories}
Fix a triangulated category $T.$

Let $I$ be a full subcategory of $T.$ We denote by $\langle
I\rangle$ the smallest strictly full subcategory of $T$ containing
$I$ and closed under finite direct sums, direct summands and shifts.
We denote by $\overline{I}$ the smallest strictly full subcategory
of $T$ containing $I$ and closed under direct sums (existing in $T$)
and shifts.

Let $I_1,I_2$ be two full subcategories of $T.$ We denote by
$I_1*I_2$ the strictly full subcategory of objects $M$ such that
there exists an exact triangle $M_1\to M\to M_2$ with $M_i\in T_i.$
Put $I_1\diamond I_2 =\langle I_1*I_2\rangle.$

Define $\langle I\rangle _0=0$ and then define by induction $\langle
I\rangle _i=\langle I\rangle _{i-1}\diamond \langle I\rangle $ for
$i\geq 1.$ Put $\langle I\rangle _\infty=\bigcup _{i\geq 0}\langle
I\rangle _i.$

The objects of $\langle I\rangle _i$ are the direct summands of the
objects obtains by taking an $i$-fold extension of finite direct
sums of objects of $I$ ([BoVdB],2.2).

\begin{defi} We say that

\begin{itemize}
\item $I$ {\rm generates} $T$ if given $C\in T$ with $\Hom
(D[i],C)=0$ for all $D\in I$ and all $i\in \bbZ,$ then $C=0.$
\item $I$ {\rm classically generates} $T$ if $T=\langle I\rangle
_\infty.$
\item An object $D\in T$ is a {\rm strong classical generator} for
$T$ if $\langle I\rangle _d=T$ for some $d \in \bbN.$
\end{itemize}
\end{defi}

\subsection{Cocomplete triangulated categories and compact objects}
A triangulated category $T$ is called {\it cocomplete} if it has
arbitrary direct sums. An object $C\in T$ is called {\it compact} if
$\Hom (C,-)$ commutes with direct sums. Denote by $T^c\subset T$ the
full triangulated subcategory of compact objects. $T$ is called {\it
compactly generated} if $T$ is generated by a set of compact
objects. We say that $T$ is {\it Karoubian} if every projector in
$T$ splits. The following theorem summarizes some known facts
([BoNe],[Ne],[Rou]).

\begin{theo} Let $T$ be a cocomplete triangulated category.

a) Then $T$ and $T^c$ are Karoubian.

Assume in addition that $T$ is compactly generated.

b) Then a set of objects $\cE \subset T^c$ classically generates
$T^c$ if and only if it generates $T.$

c) If a set of objects $\cE \subset T^c$ generates $T$ then $T$
coincides with the smallest strictly full triangulated subcategory
of $T$ which contains $\cE$ and is closed under direct sums.
\end{theo}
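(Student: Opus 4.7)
The plan is to prove the three parts separately: part (a) by a telescope construction, part (c) by an inductive approximation scheme, and part (b) by combining (c) with a finiteness observation.

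\textbf{(a)} To split an idempotent $p\colon X\to X$ in the cocomplete $T$, I would form the telescope
\[
Y := \operatorname{hocolim}\Bigl(X\xrightarrow{p} X\xrightarrow{p} X\xrightarrow{p}\cdots\Bigr),
\]
defined as the cone of the map $\id-\mathrm{sh}_p\colon \bigoplus_{n\ge 0}X\to \bigoplus_{n\ge 0}X$ whose $n$-th component is $\id_X$ on the $n$-th summand and $-p$ into the $(n+1)$-st. Performing the same construction for $\id-p$ gives a candidate $Z$, and a routine octahedron chase yields $X\simeq Y\oplus Z$, with $Y$ realising the image of $p$. That $T^c$ is Karoubian is then automatic: a retract of a compact object is compact, because $\Hom(-,-)$ commutes with direct sums on any summand of something on which it does.

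\textbf{(c)} Write $\cS\subset T$ for the smallest strictly full triangulated subcategory containing $\cE$ and closed under arbitrary direct sums. For $X\in T$, I would build inductively a tower $0=X_0\to X_1\to X_2\to\cdots$ of objects of $\cS$ with compatible maps $f_n\colon X_n\to X$, by "killing" at each stage all morphisms from shifts of $\cE$ into the current error term. Concretely, let $Y_n$ be the cone of $f_n$; form the tautological map
\[
g_n\colon \bigoplus_{E\in\cE,\; i\in\bbZ,\; \phi\in\Hom(E[i],Y_n)} E[i]\;\longrightarrow\; Y_n,
\]
compose its shift with the connecting map $Y_n[-1]\to X_n$, and let $X_{n+1}$ be the cone of the resulting $\bigoplus E_\phi[i_\phi-1]\to X_n$. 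By construction every morphism $E[i]\to Y_n$ dies in $Y_{n+1}$. Setting $W:=\operatorname{hocolim} X_n\in\cS$, the cone $V$ of $W\to X$ satisfies $\Hom(E[i],V)=0$ for all $E\in\cE$, $i\in\bbZ$, so $V=0$ by the generation hypothesis; therefore $X\simeq W\in\cS$.

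\textbf{(b)} The easy direction is formal: if $\cE$ classically generates $T^c$, fix any set $\cG$ of compact generators of $T$; then $\cG\subset\langle\cE\rangle_\infty$, so vanishing of $\Hom(E[i],C)$ against all $E\in\cE$ forces vanishing against all $G\in\cG$, hence $C=0$. For the converse, apply (c) to a compact $C\in T^c$: write $C\simeq\operatorname{hocolim} X_n$ as above. Compactness of $C$ makes the identity $C\to C$ factor through some $X_n$, so $C$ is a summand of $X_n$. The remaining point—and the main obstacle—is to promote this to $C\in\langle\cE\rangle_\infty$, that is, to replace the possibly infinite sums appearing in the construction of $X_n$ by finite ones once we track only the compact summand $C$. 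This is exactly the content of Neeman's key technical lemma: a compact retract of an iterated $\diamond$-extension of arbitrary coproducts of shifts of $\cE$ already lies in the thick subcategory classically generated by $\cE$. Everything else—the telescope for (a) and the Brown-representability-style induction for (c)—is routine once properly set up; the compactness-to-finite-sums reduction is where the real work lives.
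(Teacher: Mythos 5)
The paper does not prove this theorem; it only records it as a summary of known results with citations to [BoNe], [Ne], [Rou]. So there is no in-text argument to compare against. Judged on its own merits, your sketch is essentially the standard line of proof found in those sources, and your division of labour is accurate: you correctly identify the one genuinely hard point—the compactness-to-finite-sums reduction in the converse direction of (b)—and place it where it belongs (Neeman's lemma, the input to the Thomason--Neeman localization theorem). What follows are the places where the sketch is lighter than it sounds.

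In (a), "a routine octahedron chase yields $X \simeq Y \oplus Z$" undersells the Bökstedt--Neeman argument. The actual proof does not split $X$ directly from the two telescopes by an octahedron; one shows that the composite $X \to Y \oplus Z$ is an isomorphism by a computation in the Verdier quotient (or, in Neeman's book, by an argument with a $2\times 2$ telescope and a comparison of two ways to compute the same homotopy colimit). The statement that the splitting piece of a compact object is again compact is fine: retracts of isomorphisms are isomorphisms, applied to $\Hom(X,\bigoplus M_i) \to \bigoplus \Hom(X,M_i)$.

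In (c), the construction is correct, but the final step needs one lemma you use silently: for compact $E$, the natural map $\colim_n \Hom(E[i],X_n) \to \Hom\bigl(E[i],\operatorname{hocolim} X_n\bigr)$ is an isomorphism. With that, the proof that $W\to X$ is an isomorphism really proceeds by showing $\Hom(E[i],W)\to\Hom(E[i],X)$ is bijective: surjectivity because a map $E[i]\to X$ is a class in $\Hom(E[i],Y_0)$ which dies in $Y_1$ and hence lifts to $X_1$; injectivity because a map $E[i]\to X_n$ killed by $X_n\to X$ factors through $Y_n[-1]\to X_n$ and therefore dies in $X_{n+1}$ by construction. Your phrasing jumps directly to $\Hom(E[i],V)=0$, which is the conclusion, not the argument.

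In (b), the easy direction is correct and complete. For the converse, you reduce to the key lemma that a compact retract of an object of $\langle\overline{\cE}\rangle_d$ already lies in $\langle\cE\rangle_\infty$. This is exactly the nontrivial input and you are right that it is "where the real work lives"; as stated, your proposal does not prove it and so does not constitute a self-contained proof of (b). Since the paper itself cites this entire theorem to [BoNe], [Ne], [Rou], delegating precisely this lemma is consistent with the paper's level of detail, but it should be cited explicitly (e.g.\ to Neeman's thick-subcategory/smashing lemma or to Rouquier's account) rather than invoked as folklore.
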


\subsection{DG algebras and their derived categories}
A {\it DG algebra} is a graded unital associative ($k$-) algebra
with a differential $d$ of degree $+1$ satisfying the Leibnitz rule
and such that $d(1)=0.$ A homomorphism of DG algebras is a degree
zero $k$-linear homomorphism (not necessarily unital) of graded
associative rings which commutes with the differential. DG algebras
$A$ and $B$ are quasi-isomorphic if there exist a diagram of DG
algebras and homomorphisms
$$A\leftarrow A_1\rightarrow ...\leftarrow A_n \rightarrow B,$$
where all arrows are quasi-isomorphisms.

Let $A$ be a DG algebra. Denote by $A\text{-mod}$ the DG category
([Ke1]) of unital {\it right} DG $A$-modules. For $M,N\in
A\text{-mod}$ we have the complex $\Hom (M,N)=\oplus _{n\in
\bbZ}\Hom^n(M,N),$ where $\Hom ^n(M,N)$ consists of degree $n$
homogeneous homomorphisms of graded modules over the {\it graded}
algebra $A.$ Let $Ho(A)=Ho(A\text{-mod})$ be the {\it homotopy}
category of $A\text{-mod},$ in which we replace the $\Hom$-complexes
by the cohomology in degree zero. This is a triangulated category
and we denote by $D(A)$ the derived category of $A,$ which is the
Verdier localization of $Ho(A)$ with respect to quasi-isomorphisms.
The categories $Ho(A)$ and $D(A)$ are cocomplete and the
localization functor $Ho(A)\to D(A)$  preserves direct sums.

A DG $A$-module $S$ is called h-injective (resp. h-projective) if
for every acyclic DG $A$-module $M$ the complex $\Hom (M,S)$ is
acyclic (resp. $\Hom (S,M)$ is acyclic). There are enough
h-injectives and h-projectives in $A\text{-mod:}$ for every $M\in
A\text{-mod}$ there exist quasi-isomorphisms $M\to I,$ $P\to M,$
where $I$ is h-injective and $P$ is h-projective. Denote by
$I(A),P(A)\subset A\text{-mod}$ the full DG subcategories consisting
of h-injectives and h-projectives respectively. The induced
triangulated functors $Ho(I(A))\to D(A),$ $Ho(P(A))\to D(A)$ are
equivalences. One uses h-injectives and h-projectives to define
right and left derived functors in the usual way.

Let $\phi :A\to B$ be a homomorphism (not necessarily unital) of DG
algebras. Denote $\phi (1_A)=e$. We have the adjoint DG functors of
extension and restriction of scalars
$$\phi
^*(-)=(-)\otimes_AB=(-)\otimes_AeB :A\text{-mod}\to B\text{-mod}$$
$$ \phi _*(-)=\Hom (eB,-):B\text{-mod}\to A\text{-mod}$$
and the induced triangulated functors $\phi ^* :Ho(A)\to Ho(B),$
$\phi _*:Ho (B)\to Ho(A).$ Define the derived functor $\bL \phi
^*:D(A)\to D(B)$ using h-projectives. So $(\bL \phi ^*,\phi _*)$ is
an adjoint pair of functors between $D(A)$ and $D(B).$ If $\phi$ is
a quasi-isomorphism, then $(\bL \phi ^*,\phi _*)$ is a pair of
mutually inverse equivalences. Sometimes the functors $\phi ^*$ and
$\phi_*$ are denoted by $\Ind $ and $\Res $ respectively.

Denote by $\Perf (A)\subset D(A)$ the full triangulated subcategory
which is classically generated by the DG $A$-module $A.$ We call
objects of $\Perf (A)$ the {\it perfect} DG $A$-modules. Note that a
the functor $\bL \phi ^*$ as above preserves perfect modules (even
though $\bL \phi ^*(A)\neq B$ when $\phi $ is not unital).

For any $M\in D(A)$ we have $\Hom _{Ho(A)}(A,M)=\Hom
_{D(A)}(A,M)=H^0(M).$ Thus $A$ is a generator for $D(A).$ Since
$H^0(-)$ commutes with direct sums, the object $A\in D(A)$ is
compact. Hence $\Perf (A)\subset D(A)^c.$

\begin{prop} [Ke1] $\Perf (A)=D(A)^c.$
\end{prop}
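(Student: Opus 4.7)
The plan is to deduce this directly from Theorem 2.2 applied to the pair $(D(A), A)$, with only one substantive inclusion to establish since the paper has already noted $\Perf(A)\subset D(A)^c$.

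First I would recap the ingredients already assembled in the text: the module $A\in D(A)$ is compact (because $\Hom_{D(A)}(A,-)=H^0(-)$ commutes with direct sums in $D(A)$), and the same identification shows that $A$ is a generator in the sense of Definition 2.1, since $H^i(M)=\Hom_{D(A)}(A[-i],M)=0$ for all $i\in\bbZ$ forces $M=0$. In particular $D(A)$ is compactly generated by the single object $A$.

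Next, by Theorem 2.2(b), any set $\cE\subset D(A)^c$ which generates the cocomplete category $D(A)$ automatically classically generates $D(A)^c$. Applying this with $\cE=\{A\}$ yields
\[
D(A)^c \;=\; \langle A\rangle_\infty,
\]
and the right-hand side is exactly the definition of $\Perf(A)$. Combined with the already-established inclusion $\Perf(A)\subset D(A)^c$, this gives the desired equality.

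There is essentially no obstacle once Theorem 2.2 is at hand: the whole content of the proposition is packaged into part (b) of that theorem, and the only thing to verify is that the single module $A$ plays the role of a compact generator, which is immediate from the formula $\Hom_{D(A)}(A,M)=H^0(M)$. The work is entirely in the preliminaries (the Bondal--Van den Bergh / Neeman theorem), and the present proposition is a clean corollary.
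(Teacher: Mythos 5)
Your proof is correct, and it is a clean, self-contained derivation. The paper itself gives no argument here: it simply cites Keller's paper [Ke1], where the result is proved directly (by an explicit homotopy-colimit/filtration argument, without passing through the general Bondal--Van den Bergh/Neeman machinery). What you do instead is package the proof into Theorem 2.2(b), which the paper has already recorded from [BoNe], [Ne], [Rou]. Your two ingredients are exactly the ones the paper has set up in the paragraph preceding the proposition: $\Hom_{D(A)}(A,M)=H^0(M)$ shows both that $A$ is compact (since $H^0$ commutes with direct sums) and that $A$ generates $D(A)$ in the sense of Definition~2.1. Then $D(A)$ is compactly generated by $\{A\}$, Theorem 2.2(b) gives $D(A)^c=\langle A\rangle_\infty$, and the right-hand side is the definition of $\Perf(A)$. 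Note that the inclusion $\Perf(A)\subset D(A)^c$, which you quote as a supporting fact, is not actually needed as a separate step --- 2.2(b) already yields the equality --- but it does no harm and makes the logic transparent. The trade-off relative to Keller's original argument is the usual one: your route is shorter but leans on the heavier Neeman-style result, whereas Keller's proof is more elementary and does not presuppose the Brown-representability/compact-generation formalism.
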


The following definition extends the notion of Morita equivalence to
DG algebras.

\begin{defi} DG algebras $A$ and $B$ are called {\rm derived equivalent}
 if there exists a DG $A^{\op}\otimes B$-module $K$ such that the
 functor $-\stackrel{\bL}{\otimes }_AK:D(A)\to D(B)$ is an
 equivalence of categories.
 \end{defi}

For example, if $\phi :A\to B$ is a quasi-isomorphism of DG algebras
 then $A$ and $B$ are derived equivalent ($K=B$).

\subsection{Derived categories of abelian Grothendieck categories}
Let $\cA$ be an abelian category, $C(\cA)$ the abelian category of
complexes over $\cA,$ $Ho(\cA),$ $D(\cA)$ - the corresponding
homotopy and derived categories. One can make $C(\cA)$ into a DG
category $C^{\dg}(\cA)$ in the usual way: given $M,N\in C(\cA)$ we
get the complex $\Hom (M,N)=\oplus _{n\in \bbZ}\Hom ^n(M,N),$ where
$\Hom ^n(M,N)=\prod _{i\in \bbZ}\Hom (M^i,N^{i+n}).$ Then
$Ho(C^{\dg}(\cA))=Ho(\cA).$

An object $I\in C(\cA)$ is called h-injective if for every acyclic
$M\in C(\cA)$ the complex $\Hom (M,I)$ is acyclic. Denote by
$I(\cA)\subset C^{\dg}(\cA)$ the full DG category of h-injectives.

Recall that an object $G\in \cA$ is called a g-object if the functor
$X\mapsto \Hom _{\cA}(G,X)$ is conservative, i.e. $X\to Y$ is an
isomorphism as soon as $\Hom (G,X)\to \Hom (G,Y)$ is an isomorphism.
Such an object $G$ is usually called a generator, but we already
used this term in Definition 2.1 in a different context.

Recall that an abelian category $\cA$ is called a {\it Grothendieck
category} if it has a g-object, small inductive limits and the
filtered inductive limits are exact. In particular $\cA$ has
arbitrary direct sums.

If $\cA$ is a Grothendieck category, then so is $C(\cA).$ Then the
categories $Ho(\cA),$ $D(\cA)$ are cocomplete and the natural
functors $C(\cA)\to Ho(\cA)\to D(\cA)$ preserve direct sums. The
following proposition is proved for example in [Ka-Sch], Thm.
14.1.7.

\begin{prop} Let $\cA$ be a Grothendieck category. Then for every
$M\in C(\cA)$ there exists a quasi-isomorphism $M\to I,$ where $I\in
C(\cA)$ is h-injective. Thus the triangulated category $Ho(I(\cA))$
is equivalent to $D(\cA).$ (Hence in particular the bi-functor $\bR
\Hom (-,-):D(\cA)^{\op}\times D(\cA)\to D(k)$ is defined.)
\end{prop}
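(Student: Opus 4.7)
The plan is to first construct h-injective resolutions in $C(\cA)$ and then deduce the equivalence $Ho(I(\cA))\isomoto D(\cA)$ as a formal consequence. For the first step, I would begin by invoking Grothendieck's classical theorem that any Grothendieck category has enough injectives (derived from the existence of a g-object and the AB5 condition via a transfinite small-object argument). For a bounded-below complex $M$ a Cartan--Eilenberg style resolution then produces a quasi-isomorphism $M\to I$ where $I$ is a bounded-below complex of injectives, and such an $I$ is automatically h-injective by a standard degreewise argument against an acyclic.

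The genuine obstacle is the unbounded case, since an unbounded complex of injectives is not in general h-injective. Here I would follow Spaltenstein's method, adapted to Grothendieck categories. For each $n$, the canonical truncation $\tau^{\geq -n}M$ is bounded below and therefore admits an h-injective resolution $\tau^{\geq -n}M\to I_n$ by the previous paragraph. Using the injectivity of the terms, these resolutions can be chosen compatibly so that the transition maps $I_{n+1}\to I_n$ are componentwise split surjections covering the truncation maps. Set $I:=\varprojlim_n I_n$, which exists termwise since $\cA$ has small limits. Two things then need to be checked: that $M\to I$ is a quasi-isomorphism (which follows because the tower stabilizes in each fixed cohomological degree) and, more delicately, that $I$ is h-injective. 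The latter uses the identification $\Hom(M',I)\simeq \varprojlim_n \Hom(M',I_n)$ for acyclic $M'$, together with the acyclicity of each $\Hom(M',I_n)$ and the vanishing of $\varprojlim{}^1$ on a tower whose transition maps are degreewise surjective. This h-injectivity of the limit is the main technical point and is where the full strength of the Grothendieck-category hypothesis (both AB5 to build the injective resolutions and the Mittag-Leffler behavior on surjective towers) is used.

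Once h-injective resolutions are available, the stated equivalence is formal. Essential surjectivity of $Ho(I(\cA))\to D(\cA)$ is immediate from the existence of resolutions. For full faithfulness, the key input is the standard observation that for any h-injective $I$ and any $K$, every quasi-isomorphism $K\to I$ admits a left inverse up to homotopy, so the canonical map $\Hom_{Ho(\cA)}(K,I)\to \Hom_{D(\cA)}(K,I)$ is bijective; applied to morphisms between h-injectives, this says that every roof $J\leftarrow K\to I$ with left leg a quasi-isomorphism can be rectified to a single morphism in $Ho(\cA)$, with no residual ambiguity. For a complete write-up I would refer to [Ka-Sch, Thm.~14.1.7] rather than reproduce the transfinite induction for the existence of h-injective resolutions.
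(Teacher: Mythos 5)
The paper offers no argument of its own here---Proposition 2.5 is proved by citation to [Ka-Sch], Thm.~14.1.7---so the relevant question is whether your sketch is sound. It is not, though the gap is subtle and not where you flag it.

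You construct h-injective resolutions $I_n$ of the truncations $\tau^{\geq -n}M$ with degreewise split surjective transition maps, set $I:=\varprojlim_n I_n$, and assert that $M\to I$ is a quasi-isomorphism ``because the tower stabilizes in each fixed cohomological degree.'' Two problems. First, the terms $I_n^k$ do \emph{not} stabilize as $n\to\infty$: the complexes $\tau^{\geq -(n+1)}M$ and $\tau^{\geq -n}M$ already differ in degree $-n$, and the Cartan--Eilenberg resolutions propagate this change upward, so there is no $n_0(k)$ after which $I_n^k$ is constant. Only the cohomology $H^k(I_n)=H^k(M)$ stabilizes (for $n\geq -k$). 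Second, and decisively, passing from stabilization of $H^k(I_n)$ to $H^k(\varprojlim I_n)\cong H^k(M)$ requires a Milnor-type $\lim^1$-sequence for the tower of complexes inside $C(\cA)$, and that argument rests on exactness of countable products in $\cA$ (axiom AB4*). AB4* is not a consequence of the Grothendieck axioms; Roos produced Grothendieck categories in which it fails. So the inverse-limit construction, which is fine for sheaves of abelian groups where Spaltenstein originally worked, does not go through in an arbitrary Grothendieck category. This is exactly why the general statement required new arguments (Alonso--Jerem\'{\i}as--Souto via Brown representability, Serp\'e, and [Ka-Sch, Thm.~14.1.7] via a transfinite construction governed by a generator), none of which is the naive truncation tower.

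Notably, the step you single out as ``the main technical point''---h-injectivity of the limit---is actually unproblematic: $\Hom(M',I_n)$ is a complex of abelian groups, the transition maps in that tower are degreewise surjective, and Mittag--Leffler / $\lim^1$ vanishing holds there, so $\Hom(M',I)=\varprojlim\Hom(M',I_n)$ is acyclic whenever $M'$ is. The genuine obstruction is in the quasi-isomorphism claim $M\to I$, which lives in $C(\cA)$ where limits are not exact. The formal deduction of $Ho(I(\cA))\simeq D(\cA)$ from existence of h-injective resolutions at the end of your write-up is standard and correct.
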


Derived categories (admitting a compact generator) of Grothendieck
categories can be described using DG algebras. The proof of the
following proposition is the same argument as in [Ke1],Lemma 4.2. We
present in here because it will be used again later.

\begin{prop} Let $\cA$ be a Grothendieck category such that the
triangulated category $D(\cA)$ has a compact generator $E.$ Denote
by $A$ the DG algebra $\bR \Hom (E,E).$ Then the functor $\bR \Hom
(E,-):D(\cA) \to D(A)$ is an equivalence of categories.
\end{prop}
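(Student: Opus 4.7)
The plan is to produce an explicit quasi-inverse of $F:=\bR\Hom(E,-)$ by Hom--tensor adjunction, reduce the check that the unit is an isomorphism to the single generator $A\in D(A)$ using Theorem 2.2(c), and then deduce that the counit is also an isomorphism from conservativity of $F$ together with the triangle identities.

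Setup at the DG level: by Proposition 2.5 choose an h-injective resolution $I\in C^{\dg}(\cA)$ of $E$, and replace $A$ by the genuine DG algebra $A^{\sharp}:=\Hom^{\bullet}_{C^{\dg}(\cA)}(I,I)$, which is quasi-isomorphic to $\bR\Hom(E,E)$. The DG functor $\Hom^{\bullet}(I,-)$ lands in right DG $A$-modules by precomposition and, because $I$ is h-injective, preserves quasi-isomorphisms; the induced functor $F:D(\cA)\to D(A)$ represents $\bR\Hom(E,-)$ and satisfies $F(I)=A$. The module $I$ carries an obvious left $A$-action by evaluation, so a putative inverse is defined by $G(M):=P\otimes_A I$ with $P\to M$ an h-projective resolution, and the usual DG Hom--tensor adjunction descends to an adjoint pair $(G,F)$ with $G(A)=I$.

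The unit $\eta:\id_{D(A)}\to FG$ is an isomorphism at $A$, because $\eta_A:A\to F(I)=\Hom^{\bullet}(I,I)=A$ is the identity. Let $\cS\subset D(A)$ be the full subcategory on which $\eta$ is invertible. It is strictly full, triangulated and closed under arbitrary direct sums: $G$ preserves direct sums because the derived tensor product does, while $F$ preserves direct sums because compactness of $E$ makes $H^nF=\Hom_{D(\cA)}(E[-n],-)$ commute with direct sums, and a morphism in $D(A)$ is invertible iff it is so on each cohomology. Since $A$ is a compact generator of $D(A)$ and $A\in\cS$, Theorem 2.2(c) forces $\cS=D(A)$.

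Finally $F$ is conservative: if $F(X)=0$ then $\Hom_{D(\cA)}(E[n],X)=H^{-n}F(X)=0$ for all $n\in\bbZ$, whence $X=0$ since $E$ generates $D(\cA)$. The triangle identities give, for each $Y\in D(\cA)$, a factorisation $\id_{F(Y)}=F(\epsilon_Y)\circ\eta_{F(Y)}$; since $\eta_{F(Y)}$ is already invertible, so is $F(\epsilon_Y)$, and conservativity of $F$ then forces $\epsilon_Y$ itself to be an isomorphism. Hence $F$ is an equivalence. The one genuinely non-formal ingredient is the passage from the symbol $\bR\Hom(E,E)$ to an actual DG algebra acting on a model of $E$, which the h-injective resolution supplies; everything else is standard DG-categorical bookkeeping, and this is what makes the proof a mild variant of the argument of [Ke1, Lemma 4.2].
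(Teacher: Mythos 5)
Your proof is correct, but it runs on a noticeably different track from the paper's. The paper proves full faithfulness of $\Psi_E=\bR\Hom(E,-)$ directly, by two successive closure arguments inside $Ho(I(\cA))$ (first fix the source object $E$ and vary the target $M$; then vary the source $N$), and then gets essential surjectivity for free because a fully faithful, coproduct-preserving triangulated functor sending the compact generator $E$ to the compact generator $A$ must hit everything, by Theorem 2.2(c). You instead build an explicit left adjoint $G=(-)\stackrel{\bL}{\otimes}_A I$, verify that the unit $\eta:\id_{D(A)}\to FG$ is invertible by a single closure argument on the $D(A)$ side (again using Theorem 2.2(c) and compactness of $E$ to see that both $F$ and $G$ preserve direct sums), and then convert this into invertibility of the counit via the triangle identity $\id_{F}=F\epsilon\circ\eta F$ together with conservativity of $F$, which itself is a restatement of the generation hypothesis. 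Both arguments rest on the same devissage principle, but you trade the second closure step of the paper for the construction of an adjoint plus a conservativity trick. What this buys you is an explicit quasi-inverse and a slightly more mechanical verification; the cost is that you must make sense of $P\otimes_A I$ as a complex in the abstract Grothendieck category $\cA$ and check that the DG Hom--tensor adjunction descends to the derived categories. You wave at both points, and both are fine: $\cA$ is $k$-linear and cocomplete, so the coequalizer defining the relative tensor product exists degreewise, and the derived adjunction follows by resolving $P$ h-projectively and the target h-injectively. The paper avoids this entirely by never leaving the homotopy category of h-injectives, which is the main reason its version reads as shorter.
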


\begin{proof} Since $Ho(I(\cA))\simeq D(\cA)$ we may assume that
$E$ is h-injective and hence $A=\Hom (E,E).$ Define the DG functor
$$I(\cA)\to A\text{-mod},\quad M\mapsto \Hom (E,M).$$
Let $\Psi _E:Ho(I(\cA))\to D(A)$ be the composition of the induced
functor $Ho(I(\cA))\to Ho(A)$ with the localization $Ho(A)\to D(A).$

 Let us prove that
$\Psi _E$ is full and faithful.

Let $T\subset Ho(I(\cA))$ be the full triangulated subcategory of
objects $M$ such that the map
$$\Hom (E,M[n])\to \Hom (\Psi _E(E),\Psi _E(M[n]))$$
is an isomorphism for all $n\in \bbZ.$ Then $T$ contains $E$ and is
closed under direct sums. Hence $T=Ho(I(\cA))$ by Theorem 2.2c).
Similarly let $S\subset Ho(I(\cA))$ be the full triangulated
category consisting of objects $N$ such that for each $M\in
Ho(I(\cA))$ the map
$$\Hom (N,M)\to \Hom (\Psi _E(N),\Psi _E(M))$$
is an isomorphism. Then $S$ contains $E$ and is closed under direct
sums. So $S=Ho(I(\cA)).$

The fully faithful triangulated functor $\Psi _E$ preserves direct
sums and takes the compact generator $E$ to the compact generator
$A.$ Since categories $Ho(I(\cA))$ and $D(A)$ are cocomplete it
follows from Theorem 2.2c) that $\Psi _E$ is essentially surjective.
\end{proof}

\begin{remark} In the context of Proposition 2.6 let $E^\prime$
be another compact generator of $D(\cA)$ with $A^\prime =\bR \Hom
(E^\prime,E^\prime).$ Then the DG algebras $A$ and $A^\prime$ are
derived equivalent. Indeed assume that $E$ and $E^\prime$ are
h-injective and consider the DG $A^{op}\otimes A^{\prime}$-module
$\Hom (E^\prime,E).$ Then using the notation in the proof of
Proposition 2.6 we have the obvious morphism of functors
$$\mu :\Psi _E(-)\stackrel{\bL}{\otimes}_A \Hom
(E^\prime,E)\to \Psi_{E^\prime}(-).$$ Both functors preserve direct
sums and $\mu (E)$ is an isomorphism. Hence $\mu$ is an isomorphism
(Theorem 2.2c). But $\Psi _E$ and $\Psi _{E^\prime}$ are
equivalences. Hence
$$(-)\stackrel{\bL}{\otimes }_A\Hom  (E^\prime,E):D(A)\to
D(A^\prime)$$ is also an equivalence. In fact it is easy to see
(using Lemma 2.14) that the DG algebras $A$ and $A^\prime$ are
quasi-isomorphic.
\end{remark}

Actually, Proposition 2.6 is a special case of the following general
theorem of Keller ([Ke1],Thm.4.3).

\begin{theo} Let $\cE$ be a Frobenius exact category. Assume that
the corresponding triangulated stable category $\underline{\cE}$ is
cocomplete and has a compact generator. Then $\underline{\cE}\simeq
D(A)$ for a DG algebra $A.$
\end{theo}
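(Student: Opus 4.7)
The plan is to mimic the proof of Proposition 2.6 almost verbatim, with the role of the DG enhancement $I(\cA)$ of $D(\cA)$ replaced by an appropriate DG enhancement of the stable category $\underline{\cE}$. Everything beyond the construction of this enhancement should be purely formal, driven by Theorem 2.2c).

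The first step, which is the technical heart of the argument, is to produce a DG category $\cE^{\dg}$ together with a natural triangulated equivalence $Ho(\cE^{\dg})\simeq \underline{\cE}$ in which arbitrary direct sums in $\underline{\cE}$ lift to the DG level. For a Frobenius exact category $\cE$ with subcategory $\cP$ of projective-injective objects, I would construct $\cE^{\dg}$ out of the DG category $C^{\dg}(\cP)$ of complexes over $\cP$: concretely, take the full DG subcategory of acyclic complexes of projective-injectives and identify its homotopy category with $\underline{\cE}$ via the functor sending an acyclic complex to its $0$-cocycles (the usual equivalence between a Frobenius stable category and the acyclic homotopy category of its projective-injectives). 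The cocompleteness hypothesis on $\underline{\cE}$ guarantees that suitably large direct sums exist inside this DG category and compute direct sums in $\underline{\cE}$.

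Once $\cE^{\dg}$ is constructed, lift the compact generator $E\in \underline{\cE}$ to an object of $\cE^{\dg}$ (still denoted $E$) and set
$$A := \End_{\cE^{\dg}}(E),$$
so that $H^n(A)=\Hom_{\underline{\cE}}(E,E[n])$. The DG functor
$$\cE^{\dg}\to A\text{-mod},\qquad M\mapsto \Hom_{\cE^{\dg}}(E,M)$$
induces on passing to $Ho$ and then localizing a triangulated functor
$$\Psi_E:\underline{\cE}\to D(A).$$
The functor $\Psi_E$ preserves direct sums and sends $E$ to $A\in D(A)$. At this point the argument of Proposition 2.6 goes through word for word: to prove full faithfulness, consider for fixed $E$ the full triangulated subcategory $T\subset \underline{\cE}$ of objects $M$ for which $\Hom(E,M[n])\to \Hom(\Psi_E(E),\Psi_E(M[n]))$ is an isomorphism for all $n$; it contains $E$ and is closed under direct sums, hence equals $\underline{\cE}$ by Theorem 2.2c); then repeat the argument in the first variable. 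For essential surjectivity, the essential image of $\Psi_E$ is a full triangulated subcategory of $D(A)$ closed under direct sums and containing the compact generator $A$, so it equals $D(A)$ again by Theorem 2.2c).

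The main obstacle is the first step: the construction of a DG enhancement with the correct cocompleteness behavior. Once granted, the rest of the proof is formal in exactly the sense of Proposition 2.6, and indeed this is what makes the proposition a special case (take $\cE$ to be the category of complexes in $\cA$ with an appropriate Frobenius structure coming from h-injectives). I would expect to need to be careful about set-theoretic size issues when defining the direct sums used in the classical generation argument, but these should be handled by Theorem 2.2 together with the hypothesis that $\underline{\cE}$ has a \emph{compact} generator.
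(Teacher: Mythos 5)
The paper does not prove Theorem 2.8 at all; it states it as Keller's result and cites [Ke1, Thm.~4.3]. So there is no in-paper argument to compare your proposal against, and your outline (DG-enhance $\underline{\cE}$ by the acyclic complexes of projective-injectives, lift the compact generator $E$, set $A=\End_{\cE^{\dg}}(E)$, and run the generation argument of Proposition 2.6 via Theorem 2.2c)) is the standard route and is consistent with the reference. Your identification of what is formal and what is not is essentially right.

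One point, though: the step you flag as the main obstacle --- realizing coproducts of $\underline{\cE}$ \emph{inside} the DG category $\cE^{\dg}$ --- is not actually needed for the formal argument, and the assertion that cocompleteness of $\underline{\cE}$ supplies such DG-level coproducts is not obvious (it is not clear that $\cP$ has arbitrary coproducts, nor that coproducts of acyclic complexes stay acyclic, from the hypothesis alone). What the argument actually requires is that the triangulated functor $\Psi_E:\underline{\cE}\to D(A)$ commute with direct sums, and this is automatic for formal reasons once $Ho(\cE^{\dg})\simeq\underline{\cE}$ is established and $E$ is compact: for a family $\{M_i\}$ with coproduct taken in $\underline{\cE}$, the canonical map $\bigoplus\Psi_E(M_i)\to\Psi_E(\bigoplus M_i)$ exists in $D(A)$ by universality, and on $H^n$ it is precisely $\bigoplus\Hom_{\underline{\cE}}(E,M_i[n])\to\Hom_{\underline{\cE}}(E,(\bigoplus M_i)[n])$, an isomorphism by compactness of $E$; here one uses only that cohomology and coproducts commute in $D(A)$. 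So the closure under direct sums of the subcategories $T$ and $S$ in your (and the paper's) Proposition 2.6--style argument goes through with no demand on the DG side. This turns your ``main obstacle'' into the purely standard fact that $\underline{\cE}\simeq K_{ac}(\cP)$ as triangulated categories, and the rest is indeed formal.
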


\begin{remark} As in Remark 2.7 one can show that the DG algebra $A$
in  Theorem 2.8 is well defined up to a derived equivalence.
\end{remark}

Triangulated categories which are equivalent to the stable category
$\underline{\cE}$ of a Frobenius exact category are called {\it
algebraic} in [Ke2]. For example derived categories of abelian
categories are algebraic.

\subsection{Schemes} Let $X$ be a $k$-scheme. We denote by $QcohX$ the abelian
category of quasi-coherent sheaves on $X.$ Put $D(X)=D(QcohX)$ and
denote by $\Perf (X)\subset D(X)$ the full subcategory of perfect
complexes (i.e. complexes which are locally quasi-isomorphic to a
finite complex of free $\cO_X$-modules of finite rank).

If $X$ is quasi-compact and quasi-separated, then $QcohX$ is a
Grothendieck category [ThTr], Appendix B.

The first assertion in the next theorem is due to Neeman  and the
second is in [BoVdB]

\begin{theo} Let $X$ be a quasi-compact and separated
scheme. Then

a) $D(X)^c=\Perf (X).$

b) The category $D(X)$ has a compact generator.
\end{theo}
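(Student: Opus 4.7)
The plan is to prove (a) first, since (b) will use it.

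For (a), I would establish both inclusions separately. For $\Perf(X)\subseteq D(X)^c$, the key point is that $\cO_X$ is compact in $D(X)$: on a quasi-compact and quasi-separated scheme, $\Gamma(X,-)$ on quasi-coherent sheaves commutes with arbitrary direct sums, and via a \v{C}ech-type spectral sequence so does $\bR\Gamma(X,-)$. Hence $\Hom_{D(X)}(\cO_X,-)=H^0\bR\Gamma(X,-)$ preserves direct sums. Compactness then propagates to the triangulated subcategory classically generated by $\cO_X$, which for a separated $X$ contains every perfect complex (since being perfect is local and one can use \v{C}ech descent with respect to a finite affine cover). For the reverse inclusion $D(X)^c\subseteq\Perf(X)$, following Neeman, I would exploit that both compactness and perfectness are Zariski-local: for an affine open $j\colon U\hookrightarrow X$, the morphism $j$ is quasi-compact and quasi-separated, so $\bR j_*$ preserves direct sums; by adjunction $j^*=\bL j^*$ therefore sends compact to compact. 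On the affine piece $U=\Spec R$ one has the classical identification $D(R)^c=\Perf(R)$, and gluing these local statements yields that a compact object of $D(X)$ is perfect.

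For (b), I would follow the Bondal--Van den Bergh induction. Choose a finite affine cover $X=U_1\cup\cdots\cup U_n$, which exists by quasi-compactness, and induct on $n$. For $n=1$ the scheme $X$ is affine and $\cO_X$ is already a compact generator. For the inductive step, write $X=V\cup U$ with $V=U_1\cup\cdots\cup U_{n-1}$ and $U=U_n$ affine, and let $Z=X\setminus V$, a closed subset contained in $U$. Since $V$ is quasi-compact and quasi-separated (intersections of affines in the separated $X$ are affine), the inductive hypothesis supplies a compact generator $G_V\in D(V)$. Let $j\colon V\hookrightarrow X$. The candidate compact generator of $D(X)$ is $K\oplus\bR j_*G_V$, where $K\in D(X)$ is a perfect complex with cohomology set-theoretically supported on $Z$: one constructs $K$ as a Koszul complex on finitely many sections of $\cO_X$ that cut out $Z$ on $U$ (the ideal of $Z$ is locally finitely generated because the complement $V$ is quasi-compact). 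By (a), $K$ is compact, and $\bR j_*G_V$ is compact in $D(X)$ because $\bR j_*$ preserves direct sums.

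The main obstacle is verifying that this pair actually compactly generates $D(X)$. If $M\in D(X)$ satisfies $\Hom(K[n],M)=0=\Hom(\bR j_*G_V[n],M)$ for all $n\in\bbZ$, then by adjunction the second vanishing gives $\Hom(G_V[n],j^*M)=0$, so $j^*M=0$ since $G_V$ generates $D(V)$. Combined with the localization triangle $\bR\Gamma_Z(M)\to M\to\bR j_*j^*M$, this forces $M\cong\bR\Gamma_Z(M)$, and the first vanishing together with the fact that $K$ classically generates the compact objects of $D_Z(X)$ then forces $\bR\Gamma_Z(M)=0$, whence $M=0$. The delicate points are setting up this localization triangle correctly (which relies on $X$ being separated so that $j$ is affine), checking that the Koszul object $K$ generates the subcategory of complexes supported on $Z$ up to summands and extensions, and confirming compactness of $\bR j_*G_V$---all of which rest on the combination of separatedness, quasi-compactness, and part (a).
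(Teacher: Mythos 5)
Your overall plan is the right one---Neeman's localization techniques for (a) and the Bondal--Van den Bergh induction on an affine cover for (b)---but there are three concrete errors, two of them fatal.

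First, in part (a) you claim that for separated $X$ the subcategory $\langle\cO_X\rangle_\infty$ classically generated by $\cO_X$ contains every perfect complex. This is false: already for $X=\bbP^1$ one has $\bR\Hom(\cO,\cO)=k$ concentrated in degree $0$, so $\langle\cO\rangle_\infty\simeq\Perf(k)$, which certainly does not contain $\cO(1)$. Classical generation by $\cO_X$ only holds in the (quasi-)affine case. To show $\Perf(X)\subseteq D(X)^c$ one instead argues directly: for $P$ perfect and a finite affine cover, a Mayer--Vietoris/\v Cech induction reduces compactness of $\Hom_{D(X)}(P,-)$ with respect to direct sums to the affine pieces, where $D(R)^c=\Perf(R)$ is classical. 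Your reverse inclusion $D(X)^c\subseteq\Perf(X)$ is fine.

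Second and more seriously, in part (b) the object $\bR j_*G_V$ is \emph{not} compact. The Bökstedt--Neeman theorem that $\bR j_*$ commutes with direct sums implies that its \emph{left} adjoint $j^*$ sends compacts to compacts; it says nothing about $\bR j_*$ preserving compactness (which would instead require the \emph{right} adjoint of $\bR j_*$ to preserve direct sums). Concretely, take $j\colon\bbA^2\setminus\{0\}\hookrightarrow\bbA^2$: then $\bR j_*\cO$ has a non-coherent $H^1$ and is not perfect, hence not compact. Third, the adjunction you invoke is backwards: $\bR j_*$ is \emph{right} adjoint to $j^*$, so $\Hom_X(N,\bR j_* M')\cong\Hom_V(j^*N,M')$; there is no natural isomorphism $\Hom_X(\bR j_* G_V,M)\cong\Hom_V(G_V,j^*M)$, and in $\Qcoh$ the would-be left adjoint $j_!$ of $j^*$ does not exist. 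The standard repair, which is exactly what Bondal--Van den Bergh do, is to invoke Thomason's extension theorem: since $[G_V\oplus G_V[1]]=0$ in $K_0(V)$, there is a perfect complex $\tilde G$ on $X$ with $\tilde G|_V\cong G_V\oplus G_V[1]$. Then $\tilde G$ is compact by part (a), and if $\Hom_X(K[n],M)=0$ for all $n$ one gets $\bR\Gamma_Z M=0$, hence $M\cong\bR j_*j^*M$, and now the adjunction is used in the correct direction: $\Hom_X(\tilde G,\bR j_*j^*M)\cong\Hom_V(\tilde G|_V,j^*M)$. Vanishing for all shifts then forces $j^*M=0$ and so $M=0$. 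Your Koszul-complex construction of $K$ and the use of the localization triangle $\bR\Gamma_Z M\to M\to\bR j_*j^*M$ are correct as stated; the missing ingredient is Thomason's extension theorem, which replaces the illegitimate $\bR j_*G_V$.
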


\begin{cor} Let $X$ be a quasi-compact separated scheme. Then there
exists a DG algebra $A,$ such that $D(X)\simeq D(A).$
\end{cor}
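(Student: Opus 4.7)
The statement is essentially a formal consequence of results already assembled in this section, so the plan is simply to combine them in the correct order.

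First I would note that since $X$ is separated (hence quasi-separated) and quasi-compact, the cited result from [ThTr, Appendix B] tells us that $\Qcoh X$ is a Grothendieck category. This places us in the setup needed to apply Proposition 2.6.

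Next, by Theorem 2.10b) the triangulated category $D(X)=D(\Qcoh X)$ admits a compact generator; call it $E$. At this point all hypotheses of Proposition 2.6 are verified for the Grothendieck category $\cA = \Qcoh X$ together with the compact generator $E$. Setting $A:=\bR\Hom(E,E)$, which is a DG algebra (well-defined because Proposition 2.5 guarantees enough h-injectives, so $\bR\Hom$ makes sense on $D(\cA)$), Proposition 2.6 yields an equivalence of triangulated categories
\[
\bR\Hom(E,-): D(X)\isomoto D(A).
\]

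There is no real obstacle here: the content has all been done previously. The only thing worth flagging is the mild point that the DG algebra $A$ depends a priori on the chosen compact generator $E$, but Remark 2.7 already addresses this by showing that any two such choices produce derived-equivalent (indeed quasi-isomorphic) DG algebras, so the conclusion $D(X)\simeq D(A)$ is canonical up to derived equivalence.
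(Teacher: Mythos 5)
Your argument is exactly the paper's: note that $\Qcoh X$ is a Grothendieck category, invoke Theorem 2.10b) for a compact generator, and apply Proposition 2.6. The extra remark about well-definedness via Remark 2.7 is a harmless addition beyond what the corollary asks for.
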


\begin{proof} Indeed, since $QcohX$ is a Grothendieck category  the
corollary follows from Proposition 2.6 and Theorem 2.10b).
\end{proof}

Thus many triangulated categories "in nature" look like $D(A)$ or
$\Perf (A)$ for a DG algebra $A.$

\subsection{A few lemmas}

\begin{lemma} Let $A$ and $B$ be DG algebras,
$M\in A^{\op}\otimes B\text{-mod}$ such that the functor
$$\Phi _M(-):=(-)\stackrel{\bL}{\otimes}_AM:D(A)\to D(B)$$
induces an equivalence of full subcategories $\Perf
(A)\stackrel{\sim}{\to} \Perf (B).$ Then $\Phi _M$ is an
equivalence. In particular $A$ and $B$ are derived equivalent.
\end{lemma}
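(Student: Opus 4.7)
The plan is to mimic the proof of Proposition 2.6, using the compact generator $A$ of $D(A)$ and exploiting the hypothesis to transport it to a compact generator of $D(B)$. The key structural facts are: (i) $\Phi_M$ has the right adjoint $\Hom(M,-)$ (on h-injective models), so it preserves all coproducts; and (ii) because $\Phi_M|_{\Perf(A)} : \Perf(A) \isomoto \Perf(B)$, the object $\Phi_M(A) = M \in D(B)$ lies in $\Perf(B) = D(B)^c$ (using Proposition 2.3), and classically generates $\Perf(B)$, hence generates $D(B)$ by Theorem 2.2b). In particular $M$ is compact in $D(B)$.

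First I would prove full faithfulness of $\Phi_M$ in two stages, a la the proof of Proposition 2.6. For fixed $n \in \bbZ$, let $T \subset D(A)$ be the strictly full subcategory of objects $N$ for which
\[
\Hom_{D(A)}(A, N[n]) \lto \Hom_{D(B)}(\Phi_M(A), \Phi_M(N)[n])
\]
is an isomorphism. Both $A$ and $\Phi_M(A) = M$ are compact, so $T$ is closed under coproducts; clearly it is also triangulated. Moreover $A \in T$ because $\Phi_M$ is fully faithful on $\Perf(A)$. By Theorem 2.2c), $T = D(A)$. Next, for each $N \in D(A)$, let $S \subset D(A)$ be the full subcategory of objects $N'$ such that $\Hom(N'[n], N) \to \Hom(\Phi_M(N')[n], \Phi_M(N))$ is an isomorphism for all $n$. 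Because $\Hom(-,N)$ and $\Hom(-, \Phi_M(N))$ send coproducts to products and $\Phi_M$ preserves coproducts, $S$ is closed under direct sums (and is evidently triangulated). The previous step shows $A \in S$, so by Theorem 2.2c) again $S = D(A)$. This establishes that $\Phi_M$ is fully faithful on all of $D(A)$.

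For essential surjectivity, let $\cD \subset D(B)$ denote the essential image of $\Phi_M$. Since $\Phi_M$ is fully faithful and preserves coproducts, $\cD$ is a strictly full triangulated subcategory of $D(B)$ closed under direct sums. By hypothesis $\cD$ contains $\Perf(B) = D(B)^c$, and in particular contains the compact generator $B \in D(B)$. Applying Theorem 2.2c) one more time gives $\cD = D(B)$, so $\Phi_M$ is an equivalence, and then $A$ and $B$ are derived equivalent by Definition 2.4.

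There is no real obstacle beyond correctly organizing the two applications of Theorem 2.2c); the only point that needs a moment's thought is checking that the target-fixed subcategory $S$ in the fully-faithful step is genuinely closed under arbitrary direct sums, which reduces to $\Phi_M$ commuting with coproducts together with the standard fact that $\Hom(-,X)$ converts coproducts into products.
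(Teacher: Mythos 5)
Your proof is correct and takes essentially the same route as the paper. The paper's own proof of this lemma extracts the three properties (preservation of direct sums, transport of a compact generator, isomorphism on $\Ext^\bullet$ of generators) and then defers to ``the same argument as in the proof of Proposition 2.6''; you simply carry out that argument explicitly, using Theorem 2.2c) three times for the two stages of full faithfulness and then for essential surjectivity. One small phrasing slip: in defining $T$ you should require the comparison map $\Hom_{D(A)}(A,N[n])\to\Hom_{D(B)}(\Phi_M(A),\Phi_M(N)[n])$ to be an isomorphism for \emph{all} $n$ (not a single fixed $n$), as otherwise the triangulated closure of $T$ would need a separate argument via the five lemma; this is clearly what you intended, and it matches the formulation in the proof of Proposition 2.6.
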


\begin{proof} The DG $A$-module  is a classical generator of
$\Perf (A).$ Hence the object $\Phi _M(A)$ is a classical generator
for $\Perf (B),$ and therefore by Proposition 2.3 and Theorem 2.2b)
it is a compact generator for $D(B).$  Thus the functor $\Phi _M$
has the following three properties:

a) it preserves direct sums;

b) it maps a compact generator $A$ to a compact generator $\Phi
_M(A);$

c) it induces an isomorphism $\Ext ^\bullet (A,A)\stackrel{\sim}{\to
}\Ext ^\bullet (\Phi_M(A),\Phi _M(A)).$

Using the same argument as in the proof of Proposition 2.6 it
follows easily from a),b),c) that $\Phi _M$ is an equivalence.
\end{proof}

\begin{lemma} Let $A$ and $B$ be DG algebras and $F:D(A)\to D(B)$ be
a triangulated functor with the following properties

a) $F(\Perf (A))\subset \Perf (B).$

b) The restriction of $F$ to $\Perf (A)$ is full and faithful.

c) $F$ preserves direct sums.

Then $F$ is full and faithful.
\end{lemma}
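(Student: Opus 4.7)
The plan is to prove full-faithfulness via a two-step d\'evissage argument that mimics the proof of Proposition 2.6, using Theorem 2.2c) twice. What we need is that for every $M,N\in D(A)$ and every $n\in\bbZ$, the natural map
$$\Hom_{D(A)}(M[n],N)\longrightarrow \Hom_{D(B)}(F(M)[n],F(N))$$
is an isomorphism. The only data we have is compatibility on $\Perf(A)$ and continuity of $F$, so the key is to bootstrap from $A\in\Perf(A)$ to all of $D(A)$ using direct-sum closure.

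First I would fix the object $M=A$ and consider the full triangulated subcategory $S\subset D(A)$ consisting of those $N$ for which the above map is an isomorphism for $M=A$ and all $n$. Hypothesis (b) ensures $A\in S$ (more generally $\Perf(A)\subset S$). To show $S$ is closed under direct sums I would use that $A$ is compact in $D(A)$ (Proposition 2.3) and that $F(A)\in\Perf(B)=D(B)^c$ by hypothesis (a), so $F(A)$ is also compact; combined with (c) this gives
$$\Hom\bigl(A[n],{\textstyle\bigoplus}N_i\bigr)=\bigoplus\Hom(A[n],N_i),\qquad \Hom\bigl(F(A)[n],F({\textstyle\bigoplus}N_i)\bigr)=\bigoplus\Hom(F(A)[n],F(N_i)),$$
so a coproduct of isomorphisms is again an isomorphism. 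Since $A$ is a compact generator of $D(A)$, Theorem 2.2c) forces $S=D(A)$.

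Next I would fix an arbitrary $N\in D(A)$ and consider the full triangulated subcategory $T_N\subset D(A)$ of objects $M$ for which the comparison map is an isomorphism for all shifts $n$. By the first step $A\in T_N$. For closure under direct sums of $T_N$, the contravariance in the first argument turns coproducts into products on both sides (and on the $F$-side one uses (c) to commute $F$ with $\bigoplus$), so a product of isomorphisms is an isomorphism. Applying Theorem 2.2c) once more yields $T_N=D(A)$, which is exactly the required full-faithfulness.

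The main (minor) obstacle is arranging closure of $S$ under direct sums in the first step: this is where one must invoke compactness of both $A$ and $F(A)$ together with (c). Everything else is formal, and indeed the whole argument is a direct transcription of the structure already used for $\Psi_E$ in the proof of Proposition 2.6.
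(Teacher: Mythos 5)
Your proof is correct and is essentially the paper's own argument: the paper's proof of this lemma is a one-line reference to the d\'evissage used for Proposition 2.6 and Lemma 2.12, which is exactly the two-step subcategory argument you spell out. You also correctly pinpoint the one non-formal ingredient, namely that closure of $S$ under coproducts in the first step requires compactness of both $A$ and $F(A)$ (the latter coming from hypothesis (a) together with $\Perf(B)=D(B)^c$).
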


\begin{proof} Same argument as in the proof of Proposition 2.6 and
Lemma 2.12.
\end{proof}

Let $\cA$ be an abelian category, $X,Y\in C(\cA)$ and $f:X\to Y$ a
morphism of complexes. Consider the cone $C_f\in C(\cA)$ of the
morphism $f$ and the DG algebra $\End(C_f).$ Let $\cC\subset
\End(C_f)$ be the DG subalgebra which preserves the complex $Y,$
$$\cC =\left( \begin{array}{cc}
              \End (Y) & \Hom (X[1],Y)\\
              0 & \End (X[1])
              \end{array}\right)$$
with the projections $p_X: \cC \to \End (X[1]),$ $p_Y:\cC \to \End
(Y).$ More generally, let $A\to \End (X)=\End (X[1])$ be a
homomorphism of DG algebras. Then we can consider the corresponding
DG algebra
$$\cC _A = \left( \begin{array}{cc}
              \End (Y) & \Hom (X[1],Y)\\
              0 & A
              \end{array}\right)$$
with the projections $p_A:\cC _A\to A$ and $p_Y:\cC _A \to \End
(Y).$

\begin{lemma} Assume that the induced map $f^*:\End(Y)\to \Hom
(X,Y)$ and the composition $A\to \End (X)\stackrel{f_*}{\to}\Hom
(X,Y)$ are quasi-isomorphisms. Then $p_A$ and $p_Y$ are
quasi-isomorphisms. In particular the DG algebras $A$ and $\End (Y)$
are quasi-isomorphic.
\end{lemma}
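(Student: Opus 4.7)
The plan is to exploit the block-triangular structure of $\cC_A$ to produce a short exact sequence of underlying complexes, and read the conclusion off from the resulting long exact sequence in cohomology.

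As a graded $k$-module, $\cC_A = \End(Y) \oplus \Hom(X[1],Y) \oplus A$. The first step is to check that the projection
$$\pi = (p_Y, p_A) : \cC_A \to \End(Y) \oplus A,$$
which forgets the off-diagonal block, is a map of complexes with kernel exactly $\Hom(X[1],Y)$ equipped with its ordinary Hom-differential. (The $f$-contributions in the differential of $\cC_A$ vanish on this kernel, because the diagonal entries vanish there.) This yields a short exact sequence of complexes
$$0 \to \Hom(X[1],Y) \to \cC_A \xrightarrow{\pi} \End(Y) \oplus A \to 0.$$

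Next I would compute the connecting map $\delta : H^n(\End(Y) \oplus A) \to H^{n+1}(\Hom(X[1],Y))$. Lifting a cycle $(a,d)$ to the block-diagonal element of $\cC_A^n$ and applying the endomorphism differential on the cone produces exactly the off-diagonal entry $fd' - (-1)^n af$, where $d'$ is the image of $d$ under $A \to \End(X[1])$. Under the shift identification $\Hom(X[1],Y) = \Hom(X,Y)[-1]$, this represents $f_*(d) - (-1)^n f^*(a) \in \Hom^n(X,Y)$.

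By hypothesis, both $f^* : H^*\End(Y) \to H^*\Hom(X,Y)$ and the composite $H^*A \to H^*\End(X) \xrightarrow{f_*} H^*\Hom(X,Y)$ are isomorphisms. Under these identifications $\delta$ becomes, up to signs, the difference map $(u,v) \mapsto u - v$ on $H^*\Hom(X,Y)^{\oplus 2}$: surjective, with kernel the diagonal, which maps isomorphically onto each summand under the two coordinate projections. The long exact sequence then forces $H^*(\pi)$ to be injective and identifies its image with this diagonal, so both $H^*(p_Y)$ and $H^*(p_A)$ are isomorphisms. The final assertion follows from the DG-algebra zigzag $\End(Y) \xleftarrow{p_Y} \cC_A \xrightarrow{p_A} A$. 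The only real obstacle is sign bookkeeping in the cone and Hom differentials; conceptually everything is driven by the short exact sequence above.
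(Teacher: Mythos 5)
Your proof is correct, but it takes a genuinely different route from the paper's one-line argument. The paper simply observes that the two surjections $p_A$ and $p_Y$ have acyclic kernels: $\Ker p_A$ has underlying graded object $\End(Y)\oplus\Hom(X[1],Y)$ with differential twisted by $f$, so it is (up to sign conventions) the cone of $f^*\colon\End(Y)\to\Hom(X,Y)$, and likewise $\Ker p_Y = A\oplus\Hom(X[1],Y)$ is the cone of the composite $A\to\End(X)\xrightarrow{f_*}\Hom(X,Y)$; both hypothesized quasi-isomorphisms thus have acyclic cones, and a surjection with acyclic kernel is a quasi-isomorphism. You instead quotient out the strictly upper-triangular part all at once, getting the short exact sequence
\begin{equation*}
0 \to \Hom(X[1],Y) \to \cC_A \xrightarrow{\ (p_Y,p_A)\ } \End(Y)\oplus A \to 0,
\end{equation*}
and then diagnose $p_Y$ and $p_A$ from the long exact sequence, identifying the connecting map as (up to sign) the difference $f^*(a)-f_*(d)$ so that $H^*(\cC_A)$ is carried isomorphically onto the fiber product $H^*\End(Y)\times_{H^*\Hom(X,Y)}H^*A$, from which both coordinate projections are isomorphisms. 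Both arguments are sound; the paper's is more economical (no need to compute a connecting homomorphism or track its sign), while yours makes visible the stronger statement that $H^*(\cC_A)$ is the equalizer of the two maps to $H^*\Hom(X,Y)$, which is a clean conceptual picture even if it is not needed for the stated conclusion.
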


\begin{proof} Indeed, our assumptions imply that the kernels $\Ker
p_A= \End (Y)\oplus \Hom (X[1],Y)$ and $\Ker p_Y =A\oplus \Hom
(X[1],Y)$ are acyclic.
\end{proof}

\section{Smooth DG algebras and smooth derived categories}

\begin{defi} (Kontsevich). A DG algebra $A$ is {\rm smooth} if $A\in
\Perf (A^{\op}\otimes A).$
\end{defi}

We thank Bernhard Keller for the following remark.

\begin{remark} If $A$ is smooth, then so is $A^{\op}.$ Indeed, the isomorphism of DG algebras
$$A^{\op}\otimes A\to A\otimes A^{\op},\quad a\otimes b\mapsto
b\otimes a$$ induces an equivalence $D(A^{\op}\otimes A)\simeq
D(A\otimes A^{\op})$ which preserves perfect DG modules and sends
$A$ to $A^{\op}.$
\end{remark}

\begin{lemma} Let $A$ and $B$ be smooth DG algebras. Then so is
$A\otimes B.$
\end{lemma}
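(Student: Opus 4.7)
The plan is to identify the enveloping algebra of the tensor product with the tensor product of the enveloping algebras, and then reduce smoothness of $A\otimes B$ to a general ``preservation of perfectness under external tensor product'' statement.

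First I would rewrite
$$(A\otimes B)^{\op}\otimes(A\otimes B)\;\cong\;A^{\op}\otimes B^{\op}\otimes A\otimes B\;\cong\;(A^{\op}\otimes A)\otimes (B^{\op}\otimes B),$$
using the obvious DG-algebra isomorphism that swaps the two middle factors (as in Remark 3.2; signs are built into the tensor product of DG algebras). Under this identification, the natural bimodule $A\otimes B$ over $(A\otimes B)^{\op}\otimes(A\otimes B)$ corresponds to the external tensor product of the bimodule $A$ over $A^{\op}\otimes A$ with the bimodule $B$ over $B^{\op}\otimes B$. So the lemma reduces to the claim that this external tensor product is perfect.

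The main step is therefore the general fact: if $R$ and $S$ are DG algebras and $M\in\Perf(R)$, $N\in\Perf(S)$, then $M\otimes_k N\in\Perf(R\otimes S)$. I would prove this by a standard d\'evissage. Let $\cT\subset \Perf(R)\times \Perf(S)$ be the class of pairs $(M,N)$ satisfying the conclusion. Clearly $(R,S)\in\cT$ since $R\otimes S$ is its own free rank-one module. The class $\cT$ is separately closed under shifts, finite direct sums, direct summands, and cones in each variable (since the functor $(-)\otimes_k(-)$ is a DG bi-functor preserving direct sums and triangles). Iterating in the first variable, starting from $(R,S)$, gives $(M,S)\in\cT$ for every $M\in\langle R\rangle_\infty=\Perf(R)$; then iterating in the second variable gives $(M,N)\in\cT$ for every $N\in\Perf(S)$.

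Applying this with $R=A^{\op}\otimes A$, $S=B^{\op}\otimes B$, $M=A$ (perfect by smoothness of $A$) and $N=B$ (perfect by smoothness of $B$) yields
$$A\otimes B\in\Perf\bigl((A^{\op}\otimes A)\otimes(B^{\op}\otimes B)\bigr)=\Perf\bigl((A\otimes B)^{\op}\otimes(A\otimes B)\bigr),$$
which is the definition of smoothness of $A\otimes B$. There is no real obstacle here; the only point to be careful about is the bookkeeping of the $\op$'s and the bimodule action under the rearrangement isomorphism in the first step.
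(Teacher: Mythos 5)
Your proof is correct and is essentially the same argument as the paper's: the paper simply observes in one sentence that the external tensor product bifunctor $\otimes:D(A^{\op}\otimes A)\times D(B^{\op}\otimes B)\to D((A\otimes B)^{\op}\otimes(A\otimes B))$ preserves perfect modules and sends $(A,B)$ to $A\otimes B$, while you unpack the d\'evissage that justifies the preservation-of-perfectness claim and record the identification of enveloping algebras explicitly. Nothing is missing; you have just made explicit what the paper leaves as a one-line assertion.
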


\begin{proof} The bifunctor $\otimes :D(A^{\op}\otimes A)\times
D(B^{\op}\otimes B)\to D((A\otimes B)^{\op}\otimes A\otimes B)$ maps
$\Perf (A^{\op}\otimes A)\times \Perf (B^{\op}\otimes B)\to \Perf
((A\otimes B)^{\op}\otimes A\otimes B)$ and sends $(A,B)$ to
$A\otimes B.$
\end{proof}

The next definition is the analogue for DG algebras of the notion of
finite global dimension for associative algebras.

\begin{defi} We say that a DG algebra $A$ is {\rm weakly smooth} if
$D(A)=\langle \overline{A}\rangle _d$ for some $d\in \bbN$
(Definition 2.1). That is every DG $A$-module is quasi-isomorphic to
a direct summand of a $d$-fold extension of direct sums of shifts of
$A.$
\end{defi}

\begin{lemma} Assume that the DG algebra $A$ is weakly smooth,
$D(A)= \langle \overline{A}\rangle _d.$ Then $\Perf (A)=\langle
A\rangle _d.$ In particular $A$ is a strong generator for $\Perf
(A).$
\end{lemma}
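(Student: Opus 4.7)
The inclusion $\langle A\rangle_d\subseteq\Perf(A)$ is immediate, since $\Perf(A)$ contains $A$ and is closed under shifts, finite direct sums, summands, and cones. For the reverse inclusion, I would prove by induction on $d$ the following strengthened statement $(*)$: for every compact $Z\in D(A)$, every $X\in\langle\overline A\rangle_d$, and every morphism $f\colon Z\to X$, there exist a compact $\widetilde X\in\langle A\rangle_d$ and a factorisation $Z\to\widetilde X\to X$ of $f$. Granting $(*)$, apply it to $Z=X=M\in\Perf(A)=D(A)^c$ (Proposition 2.3) with $f=\id_M$: this realises $M$ as a retract of some $\widetilde M\in\langle A\rangle_d$, and since $D(A)$ is Karoubian (Theorem 2.2a) and $\langle A\rangle_d$ is closed under summands, we get $M\in\langle A\rangle_d$.

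The base case $d=1$ is straightforward: $X\in\langle\overline A\rangle$ is a summand of some $\bigoplus_\alpha A[n_\alpha]$, so compactness of $Z$ forces the composite $Z\xrightarrow{f}X\hookrightarrow\bigoplus A[n_\alpha]$ to factor through a finite sub-sum $\widetilde X\in\langle A\rangle$, and projecting back to $X$ gives the required factorisation. For the inductive step, realise $X$ as a summand of $N$ sitting in a triangle $N_1\to N\to N_2$ with $N_1\in\langle\overline A\rangle_{d-1}$ and $N_2\in\langle\overline A\rangle$. First, apply the base case to $Z\to X\hookrightarrow N\to N_2$ to factor this composite through a compact $Y\in\langle A\rangle$; taking the homotopy pullback of $N\to N_2$ along $Y\to N_2$ yields a new triangle $N_1\to N^\flat\to Y$ together with a lift $Z\to N^\flat$. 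Next, apply the inductive hypothesis at level $d-1$ to the connecting map $\delta\colon Y\to N_1[1]$ (with $Y$ compact): this produces a compact $\widetilde N_1\in\langle A\rangle_{d-1}$, a map $\widetilde N_1\to N_1$, and a factorisation $\delta=(Y\to\widetilde N_1[1])\circ j[1]$. Building the triangle $\widetilde N_1\to\widetilde N^\flat\to Y$ from the new connecting map gives $\widetilde N^\flat\in\langle A\rangle_{d-1}\diamond\langle A\rangle=\langle A\rangle_d$, and the candidate factorisation of $f$ is $Z\to\widetilde N^\flat\to N^\flat\to N\to X$, where the last arrow is the retraction onto the summand $X$.

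The main obstacle is to verify that the lift $Z\to\widetilde N^\flat$ can be chosen so that the composition $Z\to\widetilde N^\flat\to X$ is exactly $f$, rather than $f$ modified by a correction term that factors through $N_1$ (the lift of $Z$ along $\widetilde N^\flat\to N^\flat$ is unique only up to a map $Z\to\widetilde N_1$). This is handled by strengthening $(*)$ to permit \emph{simultaneous} factorisation of finitely many morphisms $Z\to W$ through a single compact $\widetilde W\in\langle A\rangle_k$ — a form which is equivalent to the single-morphism version, since one can pack several maps together as one map out of $Z^{\oplus n}$. Using this stronger form at level $d-1$, one chooses $\widetilde N_1$ large enough to factor both the connecting map and the obstruction term coming from $\Hom(Z,N_1)$; the correction is then absorbed into $\widetilde N_1$ and disappears from the composition. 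This gives $(*)$ and hence $\Perf(A)=\langle A\rangle_d$. The final clause follows because $\langle A\rangle_d$ is by definition the closure of $\{A\}$ under the operations permitted in a strong generation of depth $d$.
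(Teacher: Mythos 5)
Your route is genuinely different from the paper's, and this is worth noting: the paper works at the DG chain level, observing that a $d$-fold extension $Q$ of direct sums of shifts of $A$ is the filtered union of its submodules $Q_j$ that are $d$-fold extensions of \emph{finite} sums of shifts, and then uses that $\Hom_{D(A)}(P,-)$ commutes with such filtered colimits (established first for $P=A$ via $\Hom_{D(A)}(A,M)=H^0(M)$, then propagated to all of $\Perf(A)$). Your argument stays entirely inside the triangulated category $D(A)$ and is the ``compact approximation'' technique in the style of Bondal--Van den Bergh and Rouquier; it would apply verbatim to any algebraic compactly generated category with a chosen compact generator, whereas the paper's argument leans on the explicit DG model. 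So the strategic choice is reasonable and more general in spirit, at the cost of a more delicate induction.

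However, your inductive step has a real gap. Once you factor $\delta\colon Y\to N_1[1]$ as $j[1]\circ\delta'$ with $\delta'\colon Y\to\widetilde N_1[1]$ and build the triangle $\widetilde N_1\to\widetilde N^\flat\to Y\xrightarrow{\delta'}\widetilde N_1[1]$, there is no reason for a lift $Z\to\widetilde N^\flat$ of the given $Z\to N^\flat$ (or even of $Z\to Y$) to \emph{exist}. The obstruction is $\delta'\circ(Z\to Y)\in\Hom(Z,\widetilde N_1[1])$, and you only know that its image under $j[1]_*$ vanishes (because $\delta\circ(Z\to Y)=0$), not that it vanishes itself. Your discussion in the last paragraph treats the lift as existing and worries only about its non-uniqueness up to a map $Z\to\widetilde N_1$, which is a separate and milder issue. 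The proposed fix (``simultaneous factorisation of finitely many morphisms $Z\to W$ by packing them as one map out of $Z^{\oplus n}$'') does not address this either, since the map you need to control is $\delta'$, which has source $Y$, not $Z$.

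The repair is to choose $\widetilde N_1$ and $\delta'$ more carefully before building $\widetilde N^\flat$. Let $C=\operatorname{cone}(Z\to Y)$; it is compact. Because $Z\to Y\to N_1[1]$ vanishes, $\delta$ factors as $Y\to C\xrightarrow{\bar\delta}N_1[1]$. Apply your inductive hypothesis $(*_{d-1})$ to $\bar\delta\colon C\to N_1[1]$ to obtain $\widetilde N_1\in\langle A\rangle_{d-1}$ with $\bar\delta= j[1]\circ\bar\delta'$, $\bar\delta'\colon C\to\widetilde N_1[1]$, and set $\delta'\colon Y\to C\to\widetilde N_1[1]$. Then $\delta'\circ(Z\to Y)=\bar\delta'\circ(Y\to C)\circ(Z\to Y)=0$ automatically, so a lift $Z\to\widetilde N^\flat$ exists. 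After this, the composite $Z\to\widetilde N^\flat\to N^\flat\to N$ may still differ from $\iota f$ by a term factoring through $N_1$; apply $(*_{d-1})$ once more to that correction $Z\to N_1$, factor it through some $\widetilde N_1''\in\langle A\rangle_{d-1}$, and replace $\widetilde N^\flat$ by $\widetilde N^\flat\oplus\widetilde N_1''\in\langle A\rangle_d$. With those two adjustments, your induction closes and the argument is complete; without them, the step as written does not go through.
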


\begin{proof} Recall that for any DG $A$-module $M$
$$\Hom _{D(A)}(A,M)=H^0(M).$$
Since cohomology commutes with filtered inductive limits of
complexes we have
$$\Hom _{D(A)}(A,\lim_\to M_i)=\lim _\to \Hom _{D(A)}(A,M_i)$$
for any filtered inductive system of DG $A$-modules $\{M_i\}$ (here
the inductive limit is taken in the abelian category of DG
$A$-modules with morphisms being closed morphisms of degree zero).
Hence this holds also for any perfect DG $A$-module instead of $A.$

Fix $P\in \Perf (A).$ By our assumption $P$ (as any DG $A$-module)
is isomorphic to a direct summand of a $d$-fold extension $Q$ of
direct sums of shifts of $A.$ That is we have morphisms
$P\stackrel{i}{\to} Q\stackrel{p}{\to }P,$ such that $p\cdot i=\id.$
Notice that the DG module $Q$ is the union of its DG submodules
$\{Q_j\}$ which are $d$-fold extensions of {\it finite} direct sums
of shifts of $A.$ Hence the morphism $i:P\to Q$ factors through some
$Q_j\subset Q,$ so that the composition $P\stackrel{i}{\to
}Q_j\stackrel{p}{\to }P$ is the identity. Hence $P$ is isomorphic to
a direct summand of $Q_j$, i.e. $P\in \langle A\rangle _d.$
\end{proof}

\begin{lemma} a) Suppose $A$ is smooth. Then it is weakly smooth.

b) Assume that $A$ is smooth and is concentrated in degree zero.
Then $A$ has finite global dimension.
\end{lemma}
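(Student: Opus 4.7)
For part (a), the plan is to transport the structural resolution of $A$ as a bimodule into a resolution of an arbitrary DG $A$-module. Smoothness gives $A\in \Perf(A^{\op}\otimes A)=\langle A^{\op}\otimes A\rangle_\infty$, so $A\in \langle A^{\op}\otimes A\rangle_d$ for some $d\in\bbN$. For each right DG $A$-module $M$, I would introduce the triangulated functor
$$\Phi_M : D(A^{\op}\otimes A)\to D(A),\qquad \Phi_M(N)=M\stackrel{\bL}{\otimes}_A N,$$
in which the left $A$-action on the bimodule $N$ absorbs the right action on $M$, leaving a right $A$-action inherited from the second factor of $N$. This functor preserves direct sums, shifts, cones, and summands, so it maps $\langle A^{\op}\otimes A\rangle_d$ into $\langle \Phi_M(A^{\op}\otimes A)\rangle_d$. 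The key point is then to check that $\Phi_M(A^{\op}\otimes A)\in \overline{A}$. Since $\Phi_M(A^{\op}\otimes A)\simeq M\otimes A$ as right DG $A$-modules and $k$ is a field, every complex of $k$-vector spaces is split quasi-isomorphic to its cohomology, so $M\simeq \bigoplus_n k[-n]^{\oplus \dim H^n(M)}$ in $D(k)$; tensoring with $A$ over $k$ preserves this quasi-isomorphism, giving
$$\Phi_M(A^{\op}\otimes A)\;\simeq\;\bigoplus_n A[-n]^{\oplus \dim H^n(M)}\;\in\;\overline{A}.$$
Evaluating the inclusion $\Phi_M(\langle A^{\op}\otimes A\rangle_d)\subset \langle \overline{A}\rangle_d$ at $A$ yields $M=\Phi_M(A)\in \langle \overline{A}\rangle_d$. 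Since $M$ was arbitrary, $D(A)=\langle \overline{A}\rangle_d$, which is the definition of weak smoothness.

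For part (b), the plan is to produce an honest finite projective resolution of each right $A$-module from the resolution of $A$ as a bimodule. Since $A$ sits in degree zero, so does $A^{\op}\otimes A$; and for an ordinary ring the perfect DG modules coincide with complexes quasi-isomorphic to bounded complexes of finitely generated projectives. Hence smoothness provides a bounded complex $P$ of finitely generated projective $A$-bimodules of some length $d$ with $P\simeq A$. For any right $A$-module $N$, each $P^i$ is a summand of a free bimodule $(A\otimes A)^{m_i}$, so each $N\otimes_A P^i$ is a summand of $(N\otimes A)^{m_i}$ --- a free right $A$-module --- and is therefore projective. Each $P^i$ is also projective (hence flat) as a left $A$-module, so $N\otimes_A P$ computes $N\stackrel{\bL}{\otimes}_A A=N$. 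Thus $N$ admits a projective resolution of length $\leq d$, and $A$ has global dimension $\leq d$.

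The only substantive step is the identification $\Phi_M(A^{\op}\otimes A)\in \overline{A}$ in part (a); this is where the field hypothesis enters, via the splitting of every complex of $k$-vector spaces into its cohomology. Part (b) is then a routine specialization, made concrete by the standard identification of $\Perf$ for an ordinary ring with bounded complexes of finitely generated projectives.
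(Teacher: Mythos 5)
Your proof is correct and takes essentially the same route as the paper: both apply the tensor functor $M\stackrel{\bL}{\otimes}_A(-):D(A^{\op}\otimes A)\to D(A)$ to a finite resolution of the diagonal bimodule $A$ inside $\langle A^{\op}\otimes A\rangle_d$, and both rest on the observation that its value on the free bimodule $A^{\op}\otimes A$ is $M\otimes_k A$, which lies in $\overline{A}$ because a complex of $k$-vector spaces splits into its cohomology. You simply make explicit the splitting step that the paper's phrase ``$N\simeq F_A(N)\in\langle\overline{A}\rangle_d$'' leaves implicit, and in part (b) you phrase the bounded resolution via projectives rather than free modules; the substance coincides.
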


\begin{proof} a) Any DG $A^{\op}\otimes A$-module $M$ defines a functor
$F_M:D(A)\to D(A),$ $F_M(-)=(-)\stackrel{\bL}{\otimes}_AM.$ We have
$F_A\simeq \Id _{D(A)}.$ Thus if $A\in \langle A^{\op}\otimes
A\rangle _d,$ then for any $N\in D(A),$ we have $N\simeq F_A(N)\in
\langle \overline{A}\rangle _d.$

b) A perfect DG $A^{\op}\otimes A$-module is a homotopy direct
summand if a bounded complex of free $A ^{\op}\otimes A$-modules (of
finite rank). Thus as in the proof of a) for any $A$-module $M$ the
complex $F_A(M)$ (which is quasi-isomorphic to $M$) is a homotopy
direct summand of a complex of free $A$-modules which is bounded
independently of $M.$ Hence $A$ has finite global dimension.
\end{proof}

\begin{example} Let $A$ be a finite inseparable field extension of
$k.$ Then $A$ is weakly smooth (with $d=1$), but not smooth.
\end{example}

Nevertheless one has the following result.

\begin{prop} Assume that the field $k$ is perfect. Let $A$ and $C$ be
localizations of finitely generated commutative $k$-algebras.

a) Assume that the algebras $A,C$ have finite global dimension. Then
the algebra $A\otimes C$ is also regular (hence so is $A\otimes A$)
and $A$ is a perfect DG $A\otimes A$-module (i.e. the DG algebra $A$
is smooth).

b) Vice versa if $A$ has infinite global dimension, then $A$ is not
a perfect DG $A\otimes A$-module (i.e. the DG algebra $A$ is not
smooth).
\end{prop}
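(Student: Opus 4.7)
The plan for part (a) is to reduce smoothness of the DG algebra $A$ to the statement that $A$ is a finitely generated module of finite projective dimension over the Noetherian ring $A\otimes A$. First I would invoke the fact that over a perfect field $k$, a localization of a finitely generated commutative $k$-algebra is regular if and only if the corresponding scheme is smooth over $k$. Thus the hypothesis that $A$ and $C$ have finite global dimension makes $\Spec A$ and $\Spec C$ smooth over $\Spec k$. Since smoothness is stable under fiber products, $\Spec(A\otimes_k C)$ is smooth over $k$, hence regular; this proves the first assertion of (a), and in particular $A\otimes A$ is a regular Noetherian ring of finite Krull dimension, so by Serre's theorem it has finite global dimension.

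Next, via the multiplication map $\mu:A\otimes A\to A$ the ring $A$ is a cyclic, in particular finitely generated, $A\otimes A$-module. Finite global dimension together with Noetherianity of $A\otimes A$ then yields a finite resolution of $A$ by finitely generated projective $A\otimes A$-modules, each of which is a direct summand of a finitely generated free module. This exhibits $A$ as an object of $\Perf(A\otimes A)$, so the DG algebra $A$ is smooth (noting that $A^{\op}=A$ since $A$ is commutative).

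For part (b) I would argue by contraposition using Lemma 3.6(b): that lemma says a smooth DG algebra concentrated in a single degree has finite global dimension, so a commutative $k$-algebra with infinite global dimension cannot be smooth, which is exactly the claim. The main obstacle is really the regularity of $A\otimes_k A$ used in (a); that step uses the perfectness of $k$ in an essential way, since regular $k$-algebras need not remain regular after base change when $k$ is imperfect, as Example 3.5 already illustrates in the closely related setting of a finite inseparable extension.
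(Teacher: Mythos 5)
Your argument is correct and follows the same overall route as the paper, with one genuine difference of presentation: for the regularity of $A\otimes_k C$ the paper works directly with local rings (reduce to a maximal ideal $\mathfrak{m}\subset B=A\otimes C$, apply Matsumura's Theorem 23.7 to the flat local homomorphism $A_{\mathfrak{n}}\to B_{\mathfrak{m}}$, and observe that $B/\mathfrak{n}B\simeq (A/\mathfrak{n})\otimes_k C$ is \'etale over $C$ precisely because $A/\mathfrak{n}$ is finite separable over a perfect field), whereas you replace that local analysis by the geometric slogan ``regular $=$ smooth over a perfect field, and smoothness is stable under fiber products.'' The two are the same underlying fact, but your phrasing sweeps a small issue under the rug: $A$ and $C$ are only \emph{localizations} of finitely generated $k$-algebras, so $\Spec A\to\Spec k$ is not a finite-type morphism and ``smooth'' in the EGA sense does not apply verbatim. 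What you really need is: each local ring of $A\otimes C$ is a localization of $A_0\otimes C_0$ at a prime lying over smooth points of $\Spec A_0$ and $\Spec C_0$ (for suitable finitely generated subalgebras $A_0\subset A$, $C_0\subset C$), and the smooth locus of $\Spec(A_0\otimes C_0)$ contains the product of the smooth loci. That repairs the argument, and it is essentially what the paper's reduction ``we may assume $A$ and $C$ are finitely generated'' plus the local computation accomplishes. On the other hand, you make explicit a step the paper leaves implicit: from $A\otimes A$ regular Noetherian with finite Krull dimension (hence finite global dimension by Serre) and $A$ a cyclic $A\otimes A$-module, one gets a finite resolution of $A$ by finitely generated projective $A\otimes A$-modules, which is exactly the statement $A\in\Perf(A\otimes A)$. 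That is a worthwhile clarification. Part (b) is identical to the paper: quote Lemma 3.6(b).
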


\begin{proof} a). Denote $B:= A\otimes C.$ Since $B$ is noetherian it
suffices to prove that it is regular.

 We need to prove that the localization $B_{\frak{m}}$ of $B$ at every
maximal ideal is a regular local ring. For this we may assume that
$A$ and $C$ are  finitely generated $k$-algebras. Put
$K=B/\frak{m}.$ Then by Nullstellensatz $\dim _kK <\infty.$ It
follows that the ideal $\frak{n}:=\frak{m} \cap (A\otimes 1)\subset
A$ is also maximal. Put $L=A/\frak{n}A;$ this is a finite separable
extension of  $k.$ Consider the obvious (flat) embedding of local
rings $A_{\frak{n}}\to B_{\frak{m}}.$ By Theorem 23.7 in [Ma] it
suffices to prove that the ring
$F:=B_{\frak{m}}/\frak{n}B_{\frak{m}}$ is regular.

Consider the embedding $A=A\otimes 1\hookrightarrow B$ and the
induced quotient $B/\frak{n}B\simeq L\otimes C,$ which is an etale
extension of $C$ (since the field $k$ is perfect). Thus
$B/\frak{n}B$ is a regular ring. But $F$ is a localization of
$B/\frak{n}B$ at (the image of) the ideal $\frak{m}.$ So $F$ is also
regular.

b). Follows from Lemma 3.6b).
\end{proof}

\subsection{Derived invariance of smoothness}

Let us show that smoothness is an invariant of the derived
equivalence class of DG algebras.

\begin{lemma} Assume that $A$ and $B$ are derived equivalent. Then
$A$ is smooth if and only if $B$ is smooth.
\end{lemma}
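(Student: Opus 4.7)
The plan is to show that a derived equivalence between $A$ and $B$ induces a derived equivalence between their enveloping algebras $A^{\op}\otimes A$ and $B^{\op}\otimes B$ that sends the diagonal bimodule $A$ to the diagonal bimodule $B$. Once this is established, the lemma follows at once: by Proposition 2.3 we have $\Perf(A^{\op}\otimes A)=D(A^{\op}\otimes A)^c$, and any equivalence of triangulated categories preserves compactness, so $A$ lies in $\Perf(A^{\op}\otimes A)$ if and only if $B$ lies in $\Perf(B^{\op}\otimes B)$.

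To build the equivalence of bimodule categories, let $K$ be a DG $A^{\op}\otimes B$-module such that $\Phi=(-)\stackrel{\bL}{\otimes}_A K:D(A)\to D(B)$ is an equivalence. I would first replace $K$ by an h-projective resolution (say, over $A^{\op}\otimes B$) so that all the tensor products appearing below automatically compute the derived ones. Since $\Phi$ is an equivalence, $K=\Phi(A)$ is a compact generator of $D(B)$, hence perfect by Proposition 2.3, so the inverse equivalence has the form $\Psi=(-)\stackrel{\bL}{\otimes}_B K'$ for the dual $K':=\bR\Hom_B(K,B)$, a DG $B^{\op}\otimes A$-module which we again resolve if needed. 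The relations $\Phi\circ\Psi\simeq\id_{D(B)}$ and $\Psi\circ\Phi\simeq\id_{D(A)}$, evaluated on the compact generators $B$ and $A$ respectively and tracked through naturality, upgrade to bimodule quasi-isomorphisms
$$K'\stackrel{\bL}{\otimes}_A K\simeq B \quad\text{in } D(B^{\op}\otimes B), \qquad K\stackrel{\bL}{\otimes}_B K'\simeq A \quad\text{in } D(A^{\op}\otimes A).$$

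Now define
$$\tilde\Phi:D(A^{\op}\otimes A)\to D(B^{\op}\otimes B), \qquad L\mapsto K'\stackrel{\bL}{\otimes}_A L\stackrel{\bL}{\otimes}_A K,$$
and $\tilde\Psi$ symmetrically by $M\mapsto K\stackrel{\bL}{\otimes}_B M\stackrel{\bL}{\otimes}_B K'$. The two displayed bimodule identities, combined with associativity of derived tensor product, give $\tilde\Phi\circ\tilde\Psi\simeq\id$ and $\tilde\Psi\circ\tilde\Phi\simeq\id$, so $\tilde\Phi$ is an equivalence. Moreover $\tilde\Phi(A)=K'\stackrel{\bL}{\otimes}_A A\stackrel{\bL}{\otimes}_A K\simeq K'\stackrel{\bL}{\otimes}_A K\simeq B$, so the diagonal bimodules correspond. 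Combining with the opening paragraph yields the lemma.

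The main obstacle is bookkeeping rather than geometry: one must verify that the natural isomorphism $\Phi\Psi\simeq\id$, which a priori lives in $D(B)$, genuinely promotes to a quasi-isomorphism of $B$-bimodules $K'\stackrel{\bL}{\otimes}_A K\simeq B$, and likewise on the other side. This is a standard manipulation once $K$ and $K'$ are h-projective as bimodules and one notes that $\Phi\Psi(B)\simeq K'\stackrel{\bL}{\otimes}_A K$ by direct computation; the bimodule structure on both sides is inherited from the ambient structures on $K$ and $K'$, and the identification as functors forces the identification as bimodules.
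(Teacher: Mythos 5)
Your argument is essentially the paper's own proof: both take the inverse functor in bimodule form $K' = \bR\Hom_B(K,B)$, promote the functor isomorphisms to bimodule quasi-isomorphisms $K'\stackrel{\bL}{\otimes}_A K\simeq B$ and $K\stackrel{\bL}{\otimes}_B K'\simeq A$, build the induced equivalence of enveloping-algebra derived categories $L\mapsto K'\stackrel{\bL}{\otimes}_A L\stackrel{\bL}{\otimes}_A K$, and observe that it carries the diagonal bimodule $A$ to $B$ while preserving perfectness. The only cosmetic difference is that you obtain compactness of $K$ directly as the image of the compact generator $A$ under an equivalence, whereas the paper deduces it from the fact that the right adjoint preserves direct sums.
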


\begin{proof} For $M\in D(A^{op}\otimes B)$ denote by
$\Phi _M (-):D(A)\to D(B)$ the functor
$(-)\stackrel{\bL}{\otimes}_AM.$  It has the right adjoint functor
$\Psi _M(-):=\bR \Hom _B(M,-)$. Assume that $\Phi _M$ is an
equivalence. Then so is $\Psi _M$, and hence in particular $\Psi _M$
preserves direct sums, i.e. $M$ is compact as a DG $B$-module. But
then we claim that for any $T\in D(B)$ the canonical morphism of DG
$A$-modules
$$T\stackrel{\bL}{\otimes }_B\bR \Hom _B(M,B)\to \bR \Hom _B(M,T)$$
is a quasi-isomorphism. Indeed, since $M$ is compact it suffices to
check the claim for $T=B$ (Theorem 2.2c), where it is obvious. It
follows that the functor $\Psi _M$ is isomorphic to the functor
$$\Phi _N(-)=(-)\stackrel{\bL}{\otimes }_BN, \quad \text{where} \ \
N=\bR \Hom _B (M,B).$$ The isomorphisms of functors
$$\Phi _N \cdot \Phi _M\simeq \Id, \quad \Phi _M \cdot \Phi _N\simeq
\Id$$ induce in particular the quasi-isomorphisms of DG
$A^{\op}\otimes A$- and $B^{\op}\otimes B$-modules respectively
$$M\stackrel{\bL}{\otimes }_BN\simeq A,\quad N\stackrel{\bL}{\otimes
}_AM\simeq B.$$

Now consider the functors
$${}_N\Delta _M(-):=N\stackrel{\bL}{\otimes
}_A(-)\stackrel{\bL}{\otimes}_AM:D(A^{op}\otimes A)\to
D(B^{op}\otimes B),$$
$${}_M\Delta _N(-):=M\stackrel{\bL}{\otimes
}_B(-)\stackrel{\bL}{\otimes}_BN:D(B^{op}\otimes B)\to
D(A^{op}\otimes A).$$ The quasi-isomorphisms above imply the
isomorphisms of functors
$${}_M\Delta _N\cdot {}_N\Delta _M\simeq \Id, \quad
{}_N\Delta _M\cdot {}_M\Delta _N\simeq \Id.$$ Hence ${}_M\Delta _N$
and ${}_N\Delta _M$ are mutually inverse equivalences. In particular
they preserve compact objects, i.e. perfect complexes. But notice
that ${}_N\Delta _M(A)\simeq B$. This proves the lemma.
\end{proof}

\begin{cor} Assume that the DG algebras $A$ and $B$ are
quasi-isomorphic. Then $A$ is smooth if and only if $B$ is smooth.
\end{cor}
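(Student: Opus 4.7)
The plan is to reduce this immediately to Lemma 3.10 by observing that quasi-isomorphism of DG algebras implies derived equivalence. The key input is already essentially in the preliminaries: the text, right after introducing the functors $(\bL\phi^{*},\phi_{*})$, states that when $\phi : A \to B$ is a quasi-isomorphism, these form a pair of mutually inverse equivalences $D(A) \simeq D(B)$. Since $\bL\phi^{*}(-) = (-)\stackrel{\bL}{\otimes}_{A} B$ for $B$ viewed as a DG $A^{\op}\otimes B$-module (via $\phi$ on the left), this is exactly a derived equivalence in the sense of Definition 2.4.

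Next, recall the definition of quasi-isomorphism of DG algebras from Section 2.3: $A$ and $B$ are quasi-isomorphic if there is a zigzag
\[
A \leftarrow A_{1} \to A_{2} \leftarrow \cdots \leftarrow A_{n} \to B
\]
of honest DG algebra quasi-isomorphisms. By the previous paragraph, each arrow in this zigzag produces a derived equivalence between its endpoints. Derived equivalence is transitive (if $K \in D(A^{\op}\otimes B)$ and $L \in D(B^{\op}\otimes C)$ give equivalences, then $K \stackrel{\bL}{\otimes}_{B} L \in D(A^{\op}\otimes C)$ gives an equivalence as well), so composing along the zigzag shows that $A$ and $B$ are derived equivalent.

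Finally, apply Lemma 3.10: smoothness is invariant under derived equivalence, hence $A$ is smooth if and only if $B$ is smooth. No step is a real obstacle; the only content is the observation that a DG algebra quasi-isomorphism gives a derived equivalence via the bimodule $B$, plus transitivity of derived equivalence along the zigzag.
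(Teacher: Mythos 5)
Your proof is correct and takes essentially the same route as the paper: pass from a single DG algebra quasi-isomorphism $\phi : A \to B$ to the derived equivalence $(-)\stackrel{\bL}{\otimes}_A B$, then invoke the derived invariance of smoothness (Lemma 3.9 in the paper's numbering, which you cite as Lemma 3.10). The paper just writes ``we may assume there exists a quasi-isomorphism $\phi : A \to B$,'' leaving the zigzag reduction implicit; you spell that reduction out via transitivity of derived equivalence, which is a harmless and slightly more careful rendering of the same argument.
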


\begin{proof} We may assume that there exists a
quasi-isomorphism $\phi :A\to B$ of DG algebras. Then the functor
$$(-)\stackrel{\bL}{\otimes }_AB:D(A)\to D(B)$$
is an equivalence of categories. So we are done by Lemma 3.9.
\end{proof}

\subsection{Gluing smooth DG algebras}

Let $A$ and $B$ be DG algebras and $N\in A^{\op}\otimes
B\text{-mod}.$ Then we obtain a new DG algebra
$$C=\left( \begin{array}{cc}
B & 0\\
N & A\end{array} \right).$$

\begin{prop} Assume that the DG algebras $A$ and $B$ are smooth.
 Also assume that $N\in \Perf (A^{\op}\otimes
B).$ Then $C$ is smooth.
\end{prop}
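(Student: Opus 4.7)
The plan is to prove $C \in \Perf(C^{\op} \otimes C)$ by showing that $C$ fits into a short exact sequence of $C$-bimodules whose other terms are each perfect, via functorially lifting perfection from $A$, $B$, and $A^{\op} \otimes B$.

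\textbf{Bimodule decomposition of $C$.} Let $e_A, e_B \in C$ be the diagonal idempotents with $e_A + e_B = 1$, and let $\pi_A\colon C\to A$, $\pi_B\colon C\to B$ be the projections. A direct matrix check shows that $e_A C$ and $Ce_B$ are $C$-sub-bimodules of $C$ (while $Ce_A$ and $e_B C$ are merely one-sided ideals). Since $e_A C + Ce_B = C$ and $e_A C \cap Ce_B = N$ (embedded as the off-diagonal block), one obtains a Mayer--Vietoris-type short exact sequence of $C$-bimodules
$$0 \to N \to e_A C \oplus Ce_B \to C \to 0,$$
in which $N$ carries its natural $C$-bimodule structure (left action via $\pi_A$, right action via $\pi_B$). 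So it suffices to show that $N$, $e_A C$, and $Ce_B$ all lie in $\Perf(C^{\op}\otimes C)$.

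\textbf{Transfer functor.} For $X, Y \in \{A, B\}$ consider the exact functor
$$H_{XY}\colon D(X^{\op} \otimes Y) \to D(C^{\op} \otimes C), \qquad M \mapsto Ce_X \otimes_X M \otimes_Y e_Y C;$$
the tensor products are already exact, since $Ce_X$ is free of rank one as a right $X$-module (generated by $e_X$) and $e_Y C$ is free of rank one as a left $Y$-module. On the classical generator, $H_{XY}(X \otimes_k Y) = Ce_X \otimes_k e_Y C$ is a direct summand of the free rank-one $C$-bimodule $C \otimes_k C$, via the decompositions $C = Ce_A \oplus Ce_B$ of $C$ as a left $C$-module and $C = e_A C \oplus e_B C$ of $C$ as a right $C$-module. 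Hence $H_{XY}$ carries $\Perf(X^{\op} \otimes Y)$ into $\Perf(C^{\op}\otimes C)$.

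\textbf{Conclusion.} Tracking through the identifications yields $H_{AA}(A) \cong e_A C$, $H_{BB}(B) \cong Ce_B$, and $H_{AB}(N) \cong N$ as $C$-bimodules, with the structures described above. Applying these to the perfect inputs given by the hypotheses ($A \in \Perf(A^{\op}\otimes A)$, $B \in \Perf(B^{\op}\otimes B)$, and $N \in \Perf(A^{\op}\otimes B)$) places $e_A C$, $Ce_B$, and $N$ in $\Perf(C^{\op}\otimes C)$, and the Mayer--Vietoris sequence above then gives $C \in \Perf(C^{\op}\otimes C)$, i.e., $C$ is smooth. The main obstacle is the matrix-level bookkeeping: determining which corners of $C$ are sub-bimodules, setting up the Mayer--Vietoris sequence, and verifying that the functors $H_{XY}$ applied to $A$, $B$, $N$ produce bimodules isomorphic to $e_A C$, $Ce_B$, $N$ with exactly the correct $C$-bimodule structures. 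Once these identifications are pinned down, the rest of the argument is formal.
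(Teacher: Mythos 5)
Your argument is correct and, modulo packaging, is the paper's own proof: each of your transfer functors $H_{XY}(M)=Ce_X\otimes_X M\otimes_Y e_YC$ is precisely the derived induction $\bL\Ind_{X^{\op}\otimes Y}$ along the nonunital inclusion $X^{\op}\otimes Y\hookrightarrow C^{\op}\otimes C$ (sending $1\mapsto e_X\otimes e_Y$) used in the paper, and your Mayer--Vietoris sequence $0\to N\to e_AC\oplus Ce_B\to C\to 0$ is exactly the paper's realization of the diagonal bimodule $C$ as the cone of $\bL\Ind_{A^{\op}\otimes B}(N)\to\bL\Ind_{A^{\op}\otimes A}(A)\oplus\bL\Ind_{B^{\op}\otimes B}(B)$. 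One small inaccuracy to correct: it is not true that all of $Ce_X$, $e_YC$ are free of rank one --- in fact $e_AC\cong A\oplus N$ as a left $A$-module and $Ce_B\cong B\oplus N$ as a right $B$-module --- but this is harmless, since the $H_{XY}$ should in any case be read as derived tensors (so they carry $\Perf(X^{\op}\otimes Y)$ into $\Perf(C^{\op}\otimes C)$ as soon as the classical generator $X\otimes_k Y$ lands in $\Perf(C^{\op}\otimes C)$, which your direct-summand observation $Ce_X\otimes_k e_YC\subset C\otimes_k C$ still provides), and in each of the three evaluations you actually need, namely $H_{AA}(A)$, $H_{BB}(B)$, $H_{AB}(N)$, at least one outer tensor factor is genuinely free of rank one, so the underived computation agrees with the derived one.
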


\begin{proof}
Since quasi-isomorphic DG algebras are derived equivalent we may
assume that the DG $A^{\op}\otimes B$-module $N$ is h-projective
(hence it is also h-projective as DG $A^{\op}$- or $B$-module).

If $D$ and $E$ are DG algebras we will denote by $M_E,$ ${}_DM,$
${}_DM_E$ respectively a DG $E$-, $D^{\op}$- $D^{\op}\otimes
E$-module.

 It is easy to see that a DG $C$-module is the same as
a triple $S=(S_A,S_B,\phi _S:S_A\otimes _AN\to S_B),$ where
$S_A,S_B$ are DG $A$- and $B$-modules respectively and $\phi _S$ is
a closed degree zero morphism of DG $B$-modules.

Similarly, a DG $C^{\op}\otimes C$-module is given by the following
data
$$\begin{array}{rcl}
M & = & \{ {}_BM_A,{}_AM_A,{}_BM_B,{}_AM_B;\\
  &   & {}_B\Theta _{AB}:({}_BM_A)\otimes _AN\to {}_BM_B,\\
 &   & {}_A\Theta _{AB}:({}_AM_A)\otimes _AN\to {}_AM_B,\\
 &   &  {}_{BA}\Theta _{A}:N\otimes_B({}_BM_A)\to {}_AM_A,\\
 &   & {}_{BA}\Theta _{B}:N\otimes _B({}_BM_B)\to {}_AM_B \}\\
\end{array}
$$
where all the $\Theta $'s are closed degree zero morphisms of the
corresponding DG modules, such that the diagram
$$\begin{array}{rcl}
N\otimes _B({}_BM_A)\otimes _AN & \stackrel{\id\otimes ({}_B\Theta
_{AB})}{\longrightarrow} & N\otimes _B({}_BM_B)\\
{}_{BA}\Theta _A\otimes \id \downarrow & & \downarrow {}_{BA}\Theta _B\\
{}_AM_A\otimes _AN & \stackrel{{}_A\Theta _{AB}}{\longrightarrow} &
{}_AM_B
\end{array}$$
commutes. It is convenient to describe such DG $C^{\op}\otimes
C$-module $M$ symbolically by a diagram
$$\begin{array}{rcl}
{}_BM_A & \stackrel{{}_B\Theta _{AB}}{\longrightarrow} & {}_BM_B\\
{}_{BA}\Theta _A\downarrow & & \downarrow {}_{BA}\Theta _B\\
{}_AM_A & \stackrel{{}_A\Theta _{AB}}{\longrightarrow} & {}_AM_B
\end{array}$$

Then the diagram corresponding to the diagonal DG module $C$ is
$$\begin{array}{ccl}
0 & \to & B \\
\downarrow & & \downarrow \id \\
A & \stackrel{\id}{\to} & N
\end{array}
$$

We have the obvious (non-unital) inclusions of DG algebras
$A^{\op}\otimes A\to C^{\op}\otimes C,$ $A^{\op}\otimes B\to
C^{\op}\otimes C,$ etc. Hence the corresponding DG functors of
 extension of scalars
$$\Ind _{A^{\op}\otimes A}:A^{\op}\otimes A\text{-mod}\to
C^{\op}\otimes C\text{-mod}, ... $$ Consider the corresponding
derived functors $\bL \Ind _{A^{\op}\otimes A}:D(A^{\op}\otimes
A)\to D(C^{\op}\otimes C),...$ They preserve perfect DG modules.

Consider the diagonal DG $A^{\op}\otimes A$-module $A.$ Then
$$\begin{array}{rcl}
\bL \Ind_{A^{\op}\otimes A}(A) & =
 & A\stackrel{\bL}{\otimes}_{A^{\op}\otimes A}(C^{\op}\otimes C)\\
  & = & A\stackrel{\bL}{\otimes}_{A^{\op}\otimes A}[(A^{\op}\otimes A)\oplus
  (A^{\op}\otimes N)]\\
     & = & A\oplus N.
\end{array}$$
 Thus $\bL \Ind_{A^{\op}\otimes A}(A)$ is quasi-isomorphic to the
DG $C^{\op}\otimes C$-module
$$\begin{array}{ccc}
0 & \to & 0 \\
\downarrow & & \downarrow  \\
A & \stackrel{\id}{\to} & N
\end{array}
$$
Similarly, $\bL \Ind_{B^{\op}\otimes B}(B)$ is quasi-isomorphic to
$$\begin{array}{ccl}
0 & \to & B \\
\downarrow & & \downarrow \id \\
0 & {\to} & N.
\end{array}
$$
Also $\bL \Ind_{A^{\op}\otimes B}(N)$ is equal to
$$\begin{array}{ccc}
0 & \to & 0 \\
\downarrow & & \downarrow \\
0 & {\to} & N,
\end{array}
$$
We conclude that the diagonal DG $C^{\op}\otimes C$-module $C$ is
quasi-isomorphic to the cone of the obvious morphism
$$\bL \Ind_{A^{\op}\otimes B}(N)\to \bL
\Ind_{A^{\op}\otimes A}(A)\oplus \bL \Ind_{B^{\op}\otimes B}(B).$$
Thus our assumptions on $A,B,$ and $N$ imply that $C$ is perfect.
\end{proof}

\subsection{Smoothness for schemes}

Next we show that for nice schemes the two notions of smoothness
coincide.

\begin{defi} A ($k$-) scheme $Y$ is {\rm essentially of finite type}
if $Y$ is a separated scheme which admits a finite open covering by
affine schemes $SpecC$, where $C$ is a localization of a finitely
generated $k$-algebra. In particular it is quasi-compact.
\end{defi}

\begin{prop} Assume that the field $k$ is perfect.
 Let $X$ be a scheme which is essentially of finite type. Let
$E\in \Perf (X)$ be a compact generator of $D(X),$ i.e. the functor
$F:D(X)\to D(A)$, $F(M)=\bR \Hom (E,M)$ is an equivalence, where
$A=\bR \Hom (E,E)$ (Proposition 2.6, Theorem 2.10, Corollary 2.11).
Then $X$ is a regular scheme if and only if the DG algebra $A$ is
smooth.
\end{prop}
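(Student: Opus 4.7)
The plan is to reduce the theorem to the geometric statement that $X$ is regular if and only if the diagonal sheaf $\Delta_* \cO_X$ is a perfect complex on $X \times_k X$, and to bridge this to smoothness of $A$ via a Künneth-type application of Proposition 2.6.

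First I would apply Proposition 2.6 to $X \times_k X$. Since $X$ is quasi-compact and separated, so is $X \times_k X$, and a standard argument shows that $E \boxtimes E \in D(X \times X)$ is a compact generator (the box product of compact generators on a separated Noetherian scheme over a field is a compact generator). A Künneth formula for $\bR\Hom$ of perfect complexes over a field gives a quasi-isomorphism of DG algebras $\bR\End_{X\times X}(E \boxtimes E) \simeq A \otimes A$. Thus Proposition 2.6 yields an equivalence $\Phi : D(X \times X) \xrightarrow{\sim} D(A \otimes A)$, and by Remark 3.2 this identification is compatible with testing smoothness of $A$ via $D(A^{\op} \otimes A)$.

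Second I would identify $\Phi(\Delta_* \cO_X)$ with the diagonal bimodule $A \in D(A^{\op} \otimes A)$. The cleanest way is to observe that both objects represent the identity functor: the Fourier--Mukai kernel $\Delta_* \cO_X$ induces $\id_{D(X)}$, and the bimodule $A$ induces $\id_{D(A)}$; under the compatible equivalences $D(X) \simeq D(A)$ and $D(X \times X) \simeq D(A \otimes A)$ the kernel representing the identity is essentially unique. Hence $A$ is perfect in $D(A^{\op} \otimes A)$, i.e.\ smooth, if and only if $\Delta_* \cO_X$ is perfect in $D(X \times X)$.

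Third I would establish the geometric equivalence: $\Delta_* \cO_X$ perfect $\iff X$ regular. For regular $\Rightarrow$ perfect, Proposition 3.7(a) gives that $X \times_k X$ is regular (using that $k$ is perfect and $X$ is essentially of finite type), and on a Noetherian regular scheme every coherent sheaf has finite Tor-dimension and is therefore perfect; so $\Delta_* \cO_X$ is perfect. For the converse, restrict to an affine open $U = \Spec R \subset X$ from the covering provided by Definition 3.13; then $\Delta_* \cO_X|_{U \times U}$ corresponds to $R$ viewed as an $R \otimes R$-module, and its perfection says exactly that the DG algebra $R$ (concentrated in degree zero) is smooth. By Lemma 3.6(b), $R$ then has finite global dimension, so $U$ is regular, and covering $X$ by such affines yields regularity of $X$.

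The main obstacle is the second step --- rigorously identifying $\Phi(\Delta_* \cO_X)$ with $A$. This requires careful bookkeeping of left/right module conventions and the verification that the Künneth quasi-isomorphism $\bR\End(E \boxtimes E) \simeq A \otimes A$ matches the canonical $(A \otimes A)$-action on $\bR\Hom_{X \times X}(E \boxtimes E, \Delta_* \cO_X) \simeq \bR\Hom_X(E,E) = A$ with the regular bimodule structure; all other ingredients are routine applications of the results already in Sections~2--3.
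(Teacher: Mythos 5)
Your step~3 (the geometric core: $\Delta_*\cO_X$ perfect on $X\times X$ iff $X$ regular, via Proposition~3.8(a) and Lemma~3.6(b)) is exactly the paper's argument and is fine. The gap is in steps~1--2, and it is precisely the thing you flag as ``the main obstacle'': the choice of compact generator $E\boxtimes E$ for $D(X\times X)$ does not lead to the diagonal bimodule.

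Concretely, K\"unneth gives $\bR\End_{X\times X}(E\boxtimes E)\simeq A\otimes A$, which is \emph{not} $A^{\op}\otimes A$. A right $A\otimes A$-module carries two commuting right $A$-actions, whereas what you need is an $A$-$A$-bimodule, i.e.\ a right $A^{\op}\otimes A$-module; these categories are genuinely different for a noncommutative $A$, and Remark~3.2 (the flip $A^{\op}\otimes A\cong A\otimes A^{\op}$) does not identify $D(A\otimes A)$ with $D(A^{\op}\otimes A)$ in the way you need. Worse, the object your equivalence sends $\Delta_*\cO_X$ to is
$$
\bR\Hom_{X\times X}(E\boxtimes E,\Delta_*\cO_X)\simeq\bR\Hom_X(E\overset{\bL}{\otimes}E,\cO_X)\simeq\bR\Hom_X(E,E^*),
$$
which is not $A$. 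Your ``unique kernel representing the identity'' shortcut cannot rescue this, because the Fourier--Mukai formalism on $D(X\times X)$ corresponds to bimodule functors $(-)\overset{\bL}{\otimes}_A M$ only once you have a left $A$-action in one factor, and that is exactly what $E\boxtimes E$ fails to supply.

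The paper's fix (Lemmas~3.14--3.15, with the computation deferred to Proposition~6.17) is to take $E^*\boxtimes E$ instead: then $\bR\End(E^*)\simeq A^{\op}$, so $\bR\End(E^*\boxtimes E)\simeq A^{\op}\otimes A$, and
$$
\bR\Hom_{X\times X}(E^*\boxtimes E,\Delta_*\cO_X)\simeq\bR\Hom_X(E^*\otimes E,\cO_X)\simeq\bR\Hom_X(E,E^{**})\simeq A
$$
(using that $E$ is perfect, hence reflexive), and the left/right actions match the regular bimodule structure. If you replace $E\boxtimes E$ by $E^*\boxtimes E$ throughout, the rest of your outline goes through and coincides with the paper's proof. (Minor: the internal references should read Proposition~3.8 and Definition~3.12, not 3.7 and 3.13.)
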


\begin{proof} Note that Proposition 3.8 provides a local version
of this proposition. Indeed, if $X=SpecC$ then $\cO _X$ is a compact
generator of $D(X),$ so that $D(X)=D(C)$ (Serre's theorem).

Notice that the contravariant functor $M\mapsto M^*:= \bR \cHom
(M,\cO _X)$ is an auto-equivalence of the category $\Perf (X).$ It
follows that $E^*$ is also a generator of $D(X)$.

Moreover the following result implies that $E^*\boxtimes E\in \Perf
(X\times X)$ is a compact generator for $D(X\times X).$

\begin{lemma} Let $Y$ and $Z$ be quasi-compact
separated schemes. Assume that $S\in \Perf(Y)$, $T\in \Perf(Z)$ are
the compact generators of $D(Y)$ and $D(Z)$ respectively. Then
$S\boxtimes T$ is a compact generator of $D(Y\times Z)$
\end{lemma}

\begin{proof} It is [BoVdB], Lemma 3.4.1.
\end{proof}

\begin{lemma} There exist  canonical quasi-isomorphisms of DG
algebras

a) $\bR \Hom (E^*,E^*)\simeq A^{\op},$

b) $\bR \Hom (E^*\boxtimes E,E^*\boxtimes E)\simeq A^{\op}\otimes
A.$

Let $\Delta :X\to X\times X$ be the diagonal closed embedding.

c) There exists a canonical equivalence of categories $D(X\times
X)\to D(A^{\op}\otimes A)$ which takes the object $\Delta _*\cO _X$
to the diagonal DG $A^{\op}\otimes A$-module $A.$
\end{lemma}

\begin{proof} The proof is essentially the same as that of
Proposition 6.17 below. We omit it.
\end{proof}

It follows from part c) of Lemma 3.15 that $\Delta _*\cO _X\in \Perf
(X\times X)=D(X\times X)^c$ if and only if $A\in \Perf
(A^{\op}\otimes A)=D(A^{\op}\otimes A)^c.$ If $X$ is regular, then
$X\times X$ is also regular by Proposition 3.8a) hence
$D^b(coh(X\times X))=\Perf (X\times X),$ so in this case $A$ is
smooth.

Vice versa, assume that $X$ is not regular. It suffices to prove
that $\Delta _*\cO _X$ is not in  $\Perf (X\times X).$ The question
is local, so we may assume that $X=SpecC,$ where $C$ is a
localization of a finitely generated $k$-algebra. Then $C$ has
infinite global dimension and by Proposition 3.8b) we know that $C$
is not a perfect DG $C\otimes C$-module.
\end{proof}

\subsection{Smooth triangulated categories} Let $T$ be a cocomplete
triangulated category with a compact generator. We would like to say
that $T$ is smooth if there exists an equivalence of triangulated
categories $T\simeq D(A),$ where $A$ is a smooth DG algebra.
However, we don't know if this is well defined, because there exist
DG algebras which are not derived equivalent, but their derived
category are equivalent as triangulated categories. So the
triangulated category $T$ should come with an {\it enhancement},
i.e. some DG category. For example, $T$ maybe the derived category
of an abelian Grothendieck category or the stable category of a
Frobenius exact category. Then using Proposition 2.6, Theorem 2.8
and Remarks 2.7, 2.9 we may define the notion of smoothness for $T.$

\begin{defi}
a) Let $A$ be a DG algebra. We call its derived category $D(A)$ {\rm
smooth} if $A$ is smooth.

b) Let $\cA$ be an abelian Grothendieck category such that the
derived category $D(\cA)$ has a compact generator $K$. Denote $A=\bR
\Hom (K,K),$ so that $D(\cA)\simeq D(A)$ (Proposition 2.6). Then
$D(\cA)$ is called {\rm smooth} if $A$ is smooth.

c) Let $\cE$ be an exact Frobenius category such that the stable
category $\underline{\cE}$ is cocomplete and has a compact
generator. Then $\underline{\cE}\simeq D(A)$ for a DG algebra $A$
(Theorem 2.8). We call $\underline{\cE}$ {\rm smooth} if $A$ is
smooth.

Note that b) and c) are well defined by Remarks 2.7,2.9.
\end{defi}

Note that we have defined smoothness only for "big", i.e. cocomplete
categories.

\section{Definition of a categorical resolution of singularities}

\begin{defi} Let $A$ be a DG algebra. A {\rm categorical resolution}
of $D(A)$ (or of $A$) is a pair $(B,X)$, where $B$ is a smooth DG
algebra and $X\in D(A^{op}\otimes B)$ is such that the restriction
of the functor
$$\theta (-):=(-)\stackrel{\bL}{\otimes}_AX:D(A)\to D(B)$$
 to the subcategory $\Perf (A)$ is full and faithful.
We also call a categorical resolution of $D(A)$ a pair $(B,E),$
where $B$ is a smooth DG algebra and $E\in D(A\otimes B^{\op})$ is
such that the restriction of the functor
$$\theta (-):=\bR \Hom (E,-):D(A)\to D(B)$$
to the subcategory $\Perf (A)$ is full and faithful.

 Sometimes we
 will say that the pair $(D(B), \theta)$, or simply $D(B)$ or $\theta$ is a
 resolution of $D(A)$.
 \end{defi}

Let us try to explain this definition. For {\it any} DG algebra $A$
the perfect DG $A$-modules form (in our opinion) a "smooth dense
subcategory" of $D(A).$ Hence a categorical resolution of $D(A)$
should not change the subcategory $\Perf (A).$

\begin{remark} Let $A$ be a DG algebra and $B$ be a smooth DG
algebra. Let $E$ be a DG $A\otimes B^{\op}$-module such that the
functor $\bR \Hom (E,-):D(A)\to D(B)$ is full and faithful on the
subcategory $\Perf (A).$ Then the functor
$(-)\stackrel{\bL}{\otimes}_A\bR \Hom (E,A):D(A)\to D(B)$ is also a
categorical resolution of singularities. Indeed, there is a natural
isomorphism of functors from $\Perf (A)$ to $D(B)$
$$(-)\stackrel{\bL}{\otimes}_A\bR \Hom (E,A)\to \bR \Hom (E,-).$$
So the existence of two possibilities in Definition 4.1 is only for
convenience.
\end{remark}

\begin{defi} Let $A$ be a DG algebra and $(B,\theta),$ $(B^\prime
,\theta ^\prime)$ two categorical resolutions of $D(A).$ We say that
these resolutions are equivalent if there exists a DG $B^{\op}
\otimes B^\prime$-module $S$ such that the functor $\Phi
_Y(-):=(-)\stackrel{\bL}{\otimes}_BS:D(B)\to D(B^\prime )$ is an
equivalence and the functors $\Phi _Y\cdot \theta $ and $\theta
^\prime$ are isomorphic.
\end{defi}

In the rest of the paper we will discuss some examples of
categorical resolutions.

\section{Miscellaneous examples of categorical resolutions}

\begin{example} Assume that $k$ is a perfect field. Let $X$ be an algebraic variety
over $k$ and $\pi :\tilde{X}\to X$ its resolution of singularities.
Then by Proposition 3.13 the category $D(\tilde{X})$ is smooth. The
pair $(D(\tilde{X}), \bL \pi ^*)$ is a categorical resolution of
$D(X)$ if and only if the adjunction morphism
$$\phi (M): M\to \bR \pi _*\bL \pi ^*(M)$$
is a quasi-isomorphism for every $M\in \Perf (X)$. This question is
local on $X$, so it suffices to check if the morphism $\phi (\cO
_X)$ is a quasi-isomorphism. We conclude that $(D(\tilde{X}), \bL
\pi ^*)$ is a categorical resolution of $D(X)$ if and only if $X$
has rational singularities.
\end{example}

The above example may suggest that our definition of categorical
resolution of singularities is not the right one because it is
consistent with the usual geometric resolution only in the case of
rational singularities. To make things even worse let us note that
if a morphism of varieties $Y\to X$ defines a categorical resolution
of $D(X)$, then so does the morphism $\bbP ^n\times Y\to X$.
Nevertheless, in this paper we want to argue that our definition
makes sense. In particular, we will show that even if $X$ has
nonrational singularities (and the field $k$ has positive
characteristic!) there exists a categorical resolution of $D(X).$

\begin{example} Assume that  ${\rm char}(k)=0.$
Let $R$ be a commutative finitely generated $k$-algebra, such that
$Y=SpecR$ is smooth. Let $G$ be a finite group acting on $Y$ and
denote by $R *G$ the corresponding crossed product algebra. It is
smooth. Consider the possibly singular scheme $Y//G:=Spec R^G.$ Then
the functor
$$R\stackrel{\bL}{\otimes }_{R^G}(-):D(R^G)\to D(R * G)$$
is a categorical resolution of singularities. Note that $D(R^G)=
D(Y//G)$ and $D(R * G)$ is equivalent to the derived category of
$G$-equivariant quasi-coherent sheaves on $Y.$
\end{example}

\begin{example} [VdB] Let $k$ be algebraically closed and $R$ be a an
integral commutative Gorenstein $k$-algebra. Let $M$ be a reflexive
$R$-module such that the algebra $A=\End_R(M)$ has finite global
dimension and is a maximal Cohen-Macauley $R$-module. Van den Bergh
informs us that if $R$ is a localization of a finitely generated
$k$-algebra, then the DG algebra $A$ is smooth and so the functor
$$M\stackrel{\bL}{\otimes}_R(-):D(R)\to D(A^{op})$$
is a categorical resolution of $D(R).$
\end{example}

\begin{remark} Note that in the last two examples the singular
varieties ($Y//G$ and $SpecR$ respectively) have rational
singularities [StVdB].
\end{remark}

\subsection{Resolution by Koszul duality} Let $A$ be an augmented DG algebra with
the augmentation ideal $A^+$. Consider the shifted complex $A^+[1]$
and the corresponding DG tensor coalgebra $BA:=T (A^+[1])$. The
differential in $BA$ depends on the differential in $A$ and the
multiplication in $A.$ It is called the bar construction of $A$. Its
graded linear dual $(BA)^*$ is again an augmented DG algebra called
the {\it Koszul dual} of $A$ and denoted $\check{A}$. The map
$\sigma :BA\to (B(A^{\op}))^{\op},$ $\sigma (b_1\otimes ...\otimes
b_n)=(-1)^{(\Sigma_{i<j}\bar{b}_i\bar{b}_j)+n}b_n\otimes ...\otimes
b_1$ is an isomorphism of DG coalgebras. (Here $\bar{b}$ is the
degree of $b$). Therefore the Koszul dual of $A^{\op}$ is
$(\check{A})^{\op}.$

Since $A$ is a DG algebra and $BA$ is a DG coalgebra the complex
$\Hom  (BA,A)$ is naturally a DG algebra. An element $\alpha \in
\Hom ^1(BA,A)$ is called a {\it twisting cochain} if it satisfies
the Maurer-Cartan equation $d\alpha +\alpha ^2=0.$ The projection of
$T A^+[1]$ onto its first component $A^+[1]$ followed by the
(shifted) identity map $A^+[1]\to A^+$ is the universal twisting
cochain which we denote by $\tau.$

Consider the tensor product $BA\otimes A$ with the differential
$d=d_{BA}\otimes 1+1\otimes d_A+t_{\tau}$ where $t_{\tau}(b\otimes
a)=b_{(1)}\otimes \tau(b_{(2)})a$ (here $b\mapsto b_{(1)}\otimes
b_{(2)}$ is the symbolic notation for the comultiplication map $BA
\to BA\otimes BA$). Then indeed $d^2=0$ and we denote the
corresponding complex by $BA\otimes _{\tau}A.$ It is
quasi-isomorphic to $k$ and is called the bar complex of $A.$ This
bar complex is naturally a right DG $A$-module. It is also a left DG
$BA$-comodule in the obvious way and hence a right DG
$\check{A}$-module. Therefore in particular $BA\otimes _{\tau}A$ is
a DG $A\otimes \check{A}$-module.

Similarly using $-\tau$ (which is a twisting cochain in the DG
algebra $\Hom ((BA)^{\op},A^{\op})^{\op})$) we define the
differential $d=d_A\otimes 1+1\otimes d_{BA}+s_{-\tau}$ on $A\otimes
BA,$ where $s_{-\tau}(a\otimes b)=-a\tau (b_{(1)})\otimes b{(2)}.$
Denote the resulting complex by $A\otimes _\tau BA;$ it is a left DG
$A$-module and a right DG $BA$-comodule in the obvious way. Hence in
particular $A\otimes _{\tau }BA$ is a DG $A^{\op}\otimes
\check{A}^{\op}$-module. It is again quasi-isomorphic to $k.$

Define the Koszul functor
$$K_A(-):=(-)\stackrel{\bL}{\otimes }_A(A\otimes _\tau BA):D(A)\to
D(\check{A}^{\op}).$$
 This functor is often full and faithful on the
subcategory $\Perf (A)$. Hence it defines a categorical resolution
of $D(A)$ in case the DG algebra $\check{A}^{\op}$ is smooth.  The
following lemma is proved in [ELOII].

\begin{lemma} Assume that an augmented DG algebra $A$ satisfies the
following properties.

i) $A^{<0}=0;$

ii) $A^0=k;$

iii) $\dim A^i<\infty $ for every $i.$ Then the Kozsul functor $K_A$
is full and faithful on the subcategory $\Perf (A).$
\end{lemma}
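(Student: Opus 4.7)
The functor $K_A$ is triangulated and commutes with arbitrary direct sums (hence with shifts and cones). Since $\Perf(A)=\langle A\rangle_\infty$, a standard two-variable d\'evissage reduces the full-faithfulness of $K_A$ on $\Perf(A)$ to showing that the natural map
\[
\psi_n:\Hom_{D(A)}(A,A[n])\longrightarrow \Hom_{D(\check{A}^{\op})}(K_A(A),K_A(A)[n])
\]
is an isomorphism for every $n\in\bbZ$. The left-hand side equals $H^n(A)$, and $K_A(A)=A\otimes_\tau BA$.

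The first technical step is to exhibit $A\otimes_\tau BA$ as an h-projective DG $\check{A}^{\op}$-module. For this I would introduce the ascending filtration $F_p=A\otimes\bigoplus_{i\le p}(A^+[1])^{\otimes i}$. The twisting term $s_{-\tau}$ strictly decreases bar degree, so each $F_p$ is a DG submodule; the associated graded factors are of the form $A\otimes(A^+[1])^{\otimes p}$ with only the untwisted tensor differential, and they are free as graded $\check{A}^{\op}$-modules via the perfect pairing $BA\otimes\check{A}\to k$ (whose non-degeneracy rests on $\dim A^i<\infty$). The positivity assumptions $A^{<0}=0$, $A^0=k$ ensure that this filtration is convergent in the sense needed for semi-freeness, whence h-projectivity follows.

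With h-projectivity in hand, $\bR\Hom_{\check{A}^{\op}}(A\otimes_\tau BA, A\otimes_\tau BA)$ is computed by the ordinary Hom complex, and the commuting left $A$-module structure on $A\otimes_\tau BA$ produces a DG algebra homomorphism
\[
\phi:A\longrightarrow \End^\bullet_{\check{A}^{\op}}(A\otimes_\tau BA).
\]
The heart of the proof is to show that $\phi$ is a quasi-isomorphism. I would use the filtration $F_p$ in the target variable to filter the endomorphism complex and analyse the associated spectral sequence. On the $E_1$-page the computation reduces, via the bar/cobar identification, to an incarnation of the double Koszul dual $\check{\check{A}}$; the hypotheses on $A$ are precisely what is needed to identify $\check{\check{A}}$ with $A$ and to guarantee convergence of the spectral sequence in each cohomological degree.

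The main obstacle is this last step. Without $\dim A^i<\infty$ the natural map $A\to\check{\check{A}}$ picks up unwanted completions and fails to be a quasi-isomorphism; without $A^{<0}=0$ and $A^0=k$ the bar construction is either ill-defined or produces a filtration whose spectral sequence fails to converge in each degree. Granted these points, $\phi$ is a quasi-isomorphism and unwinding the identifications shows that each $\psi_n$ is an isomorphism, giving the full-faithfulness of $K_A$ on $\Perf(A)$.
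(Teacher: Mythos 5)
The paper does not prove this lemma itself; it is cited to [ELOII]. That said, your proposal contains a genuine gap at the central technical step.

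The reduction to showing that
$A\to\bR\Hom_{\check{A}^{\op}}(K_A(A),K_A(A))$
is a quasi-isomorphism is fine. The problem is the claim that the bar-degree filtration $F_p=A\otimes\bigoplus_{i\le p}(A^+[1])^{\otimes i}$ exhibits $A\otimes_\tau BA$ as semi-free over $\check{A}^{\op}$. The $\check{A}^{\op}$-module structure on $A\otimes_\tau BA$ comes from the right $BA$-comodule structure, i.e. $(a\otimes b)\cdot\xi=a\otimes b_{(1)}\,\xi(b_{(2)})$. For $b$ of bar degree $p$, every term $b_{(1)}\otimes b_{(2)}$ of the deconcatenation coproduct with $b_{(2)}\ne 1$ has $b_{(1)}$ of bar degree $<p$. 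Hence, after passing to the associated graded $F_p/F_{p-1}=A\otimes(A^+[1])^{\otimes p}$, only the term $b\otimes 1$ survives and the action of $\xi$ is multiplication by $\xi(1)=\epsilon(\xi)$. In other words, $\check{A}$ acts on each graded piece through its augmentation $\check{A}\to k$: the pieces are \emph{trivial} $\check{A}^{\op}$-modules, not free ones. So $A\otimes_\tau BA$ is not semi-free, and h-projectivity over $\check{A}^{\op}$ does not follow; indeed it fails in general. (The pairing $BA\otimes\check{A}\to k$ also need not be perfect here, since under conditions (i)--(iii) $BA$ can fail to be degreewise finite-dimensional, e.g. whenever $A^1\ne 0$.)

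It is instructive to compare with Lemma 5.8, whose proof the paper does give. There $A$ is finite-dimensional, nonpositively graded, with nilpotent $A^+$; one filters by powers of $A^+$, gets a \emph{finite} filtration, and shows that $A\otimes_\tau BA$ is h-\emph{injective} (not h-projective) over $\check{A}^{\op}$, because $BA=(\check{A})^*$ is the linear dual of a free module. Under the present hypotheses (nonnegatively graded, $A^0=k$, degreewise finite-dimensional but typically infinite-dimensional, non-nilpotent $A^+$) neither the h-projectivity argument you propose nor a naive h-injectivity argument works, and the argument of [ELOII] proceeds along different lines. The high-level sentiment in your last paragraph about double Koszul duality and convergence of a spectral sequence is the right intuition, but the concrete mechanism you offer for it does not stand.
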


Here we consider another example.

\begin{prop} Let $A$ be an augmented finite dimensional  DG
algebra concentrated in nonpositive degrees. Assume in addition that
the augmentation ideal $A^+$ is nilpotent. Then the Koszul functor
$K_A:D(A)\to D(\check{A}^{\op})$ is a categorical resolution.
\end{prop}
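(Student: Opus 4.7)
The plan is to verify the two requirements of Definition 4.1: that $K_A$ restricted to $\Perf(A)$ is full and faithful, and that the DG algebra $\check{A}^{\op}$ is smooth. For the first, since $A$ classically generates $\Perf(A)$ and the locus where $K_A$ is full and faithful is triangulated and closed under direct summands and shifts, it suffices to show that
$$H^n(A)=\Hom_{D(A)}(A,A[n])\longrightarrow \Hom_{D(\check{A}^{\op})}(K_A(A),K_A(A)[n])$$
is an isomorphism for every $n\in\bbZ$. By construction $K_A(A)=A\otimes_\tau BA$ is h-projective over $A$ and quasi-isomorphic to $k$, and under this quasi-isomorphism the induced $\check{A}^{\op}$-action is the canonical augmentation structure, so $K_A(A)\simeq k$ in $D(\check{A}^{\op})$. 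The bar resolution of $k$ over $\check{A}^{\op}$ then identifies the right-hand side with $H^n(\check{\check{A}}^{\op})$, so full-faithfulness reduces to the double Koszul duality statement that the canonical map $A\to\check{\check{A}}$ is a quasi-isomorphism. Under our hypotheses this holds: nilpotency of $A^+$ makes the tensor coalgebra $BA$ strictly conilpotent (each element is annihilated by iterated reduced coproducts in a uniformly bounded number of steps), and finite-dimensionality of $A$ makes $BA$ degreewise finite-dimensional, so the linear dualizations can be taken strictly without any completion; the detailed verification is of the kind developed in [ELOII].

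For the smoothness of $\check{A}^{\op}$, by Remark 3.2 it suffices to prove $\check{A}$ is smooth. I would construct an explicit small Koszul bimodule resolution of the diagonal $\check{A}$-bimodule: set $R:=\check{A}\otimes A\otimes\check{A}$, regarded as a DG $\check{A}^{\op}\otimes\check{A}$-module via the outer factors and equipped with a two-sided twisted differential built from the universal twisting cochain $\tau$ (the middle $A$ absorbing the twisting on both sides). A two-sided bar-construction argument, combining the one-sided contractibilities $A\otimes_\tau BA\simeq k$ and $BA\otimes_\tau A\simeq k$, shows that $R$ is quasi-isomorphic as a bimodule to the diagonal $\check{A}$. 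Since $A$ is finite-dimensional over $k$, $R$ is free of rank $\dim_k A$ as a graded $\check{A}^{\op}\otimes\check{A}$-module, hence perfect; this exhibits $\check{A}\in\Perf(\check{A}^{\op}\otimes\check{A})$, proving smoothness.

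The principal obstacle in this approach is establishing the two foundational Koszul inputs, namely the quasi-isomorphism $A\to\check{\check{A}}$ and the bimodule quasi-isomorphism $R\simeq\check{A}$. Both depend on the bar/cobar machinery behaving strictly, which is guaranteed by exactly the hypotheses of the proposition: finite-dimensionality of $A$ prevents convergence issues when linearly dualizing, and nilpotency of $A^+$ enforces the strict conilpotence of $BA$ needed for the duality to close up. Once these inputs are in place, both the full-faithfulness and the smoothness follow by direct manipulation of the twisted tensor products.
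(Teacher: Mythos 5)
Your overall architecture is right---the proof splits into (i) full-faithfulness of $K_A$ on $\Perf(A)$, which reduces (since $A$ classically generates) to a single computation at $A$, and (ii) smoothness of $\check{A}^{\op}$, for which you try to build a finite bimodule resolution of the diagonal $\check{A}$. This matches the paper's Lemmas 5.7--5.8. But both halves contain genuine gaps.

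For the smoothness half, the object you write down, $\check{A}\otimes A\otimes\check{A}$ ``with a two-sided twisted differential built from $\tau$,'' does not exist as stated: the universal twisting cochain $\tau\colon BA\to A$ goes from a coalgebra to an algebra, whereas in your $R$ the middle $A$ is an algebra, not a coalgebra, so there is no natural two-sided twisted differential. The paper's construction is the linear dual of the two-sided bar complex $BA\otimes_\tau A\otimes_\tau BA$, which is $\check{A}\otimes_{\tau^*}A^*\otimes_{\tau^*}\check{A}$ with $A^*$ (the linear dual of $A$, a coalgebra) in the middle and the dual cochain $\tau^*\colon A^*\to\check{A}$ providing the twisting; $A$ and $A^*$ are not interchangeable here. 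More seriously, even with the right object, the inference ``free of finite rank as a \emph{graded} $\check{A}^{\op}\otimes\check{A}$-module, hence perfect'' is invalid: a DG module can be graded-free of finite rank without being a finite extension of shifts of the free module, because the differential can be a nontrivial matrix over the DG algebra. What actually proves perfectness in the paper is a finite filtration of $A$ by powers of $A^+$ (refined by the image of $d_A$), which induces a filtration of $\check{A}\otimes_{\tau^*}A^*\otimes_{\tau^*}\check{A}$ whose associated graded pieces are honest finite sums of shifted free $\check{A}^{\op}\otimes\check{A}$-modules --- because the twisting term strictly increases the power of $A^+$. This is exactly where the nilpotency hypothesis enters the smoothness argument, and your proposal never uses it for that purpose.

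For the full-faithfulness half, you reduce to a ``double Koszul duality'' quasi-isomorphism $A\to\check{\check{A}}$ and defer to [ELOII]. That reference proves Lemma 5.5, which has different hypotheses ($A^{<0}=0$, $A^0=k$, degreewise finite-dimensional), incompatible with the present ones (nonpositive degrees, $A^0$ possibly larger than $k$); it does not apply. Moreover $\check{\check{A}}=(B\check{A})^*$ is a completed object, and whether $A\to\check{\check{A}}$ is a quasi-isomorphism is a convergence statement that is not settled just by noting $BA$ is conilpotent. The paper avoids this entirely: it observes that $K_A(A)\simeq k$, and that under the same $A^+$-filtration the DG $\check{A}^{\op}$-module $A\otimes_\tau BA$ is a finite extension of shifted copies of $BA=(\check{A})^*$, hence h-injective; then $\bR\Hom_{\check{A}^{\op}}(k,k)=\Hom_{\check{A}^{\op}}(k,A\otimes_\tau BA)=A$ by a direct computation. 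This bypasses the double-dual question altogether and is where the nilpotency hypothesis is used a second time. You should replace the appeal to $A\simeq\check{\check{A}}$ with this explicit resolution argument, and replace ``graded-free hence perfect'' with the $A^+$-filtration argument.
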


 The proposition is equivalent to the following two lemmas.

\begin{lemma} Let $A$ be as in Proposition 5.6. Then the DG algebras
$\check{A}$ and $\check{A}^{\op}$ are smooth.
\end{lemma}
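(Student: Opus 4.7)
Since Remark~3.2 says smoothness is invariant under $B\mapsto B^{\op}$, it suffices to prove $\check{A}$ is smooth. The plan is to identify $\check{A}$ as a quasi-free DG algebra on finitely many generators and exhibit a short perfect bimodule resolution.

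Under the hypotheses on $A$, the shifted augmentation ideal $A^+[1]$ lies in degrees $\leq -1$, so $BA=T(A^+[1])$ is concentrated in nonpositive degrees with $BA^0=k$ and finite-dimensional graded components (only $(A^+[1])^{\otimes n}$ with $n\leq -d$ contributes to $BA^{d}$, and each is finite-dimensional). Dualizing the cofree tensor-coalgebra structure on $BA$, the underlying graded algebra of $\check{A}=(BA)^*$ is the tensor algebra $T(V)$ on $V:=(A^+[1])^*$, a finite-dimensional graded vector space concentrated in strictly positive degrees; and the differential on $\check{A}$ (dual to the bar coderivation) is a derivation completely determined by $d|_V\colon V\to \check{A}$. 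Hence $(\check{A},d)$ is a quasi-free DG algebra on the finite generating set $V$.

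I would then invoke the standard K\"ahler-differentials presentation: the short exact sequence of DG $(\check{A}^{\op}\otimes\check{A})$-modules
\begin{equation*}
0 \longrightarrow \Omega^1_{\check{A}/k} \longrightarrow \check{A}\otimes\check{A} \xrightarrow{\mu} \check{A} \longrightarrow 0,
\end{equation*}
where $\mu$ is multiplication and $\Omega^1_{\check{A}/k}\cong \check{A}\otimes V\otimes\check{A}$ as a graded bimodule (the universal derivation sends $v\mapsto 1\otimes v\otimes 1$), the DG structure being determined by $d(1\otimes v\otimes 1)=D(dv)$. The free bimodule $\check{A}\otimes\check{A}$ is tautologically perfect. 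The key observation is that $\Omega^1_{\check{A}/k}$ is also perfect: since $V$ is finite-dimensional with basis $v_1,\ldots,v_N$, as a graded $(\check{A}^{\op}\otimes\check{A})$-module we have $\Omega^1_{\check{A}/k}\cong\bigoplus_{i=1}^N(\check{A}\otimes\check{A})[-\deg v_i]$, a \emph{finite} direct sum of shifts of the free rank-one bimodule. By the Bondal--Kapranov twisted-complex principle, any DG module whose underlying graded module is such a finite direct sum of shifts of the free generator lies in the pretriangulated hull of that generator, hence is perfect. This gives $\Omega^1_{\check{A}/k}\in\Perf(\check{A}^{\op}\otimes\check{A})$.

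Combining the distinguished triangle $\Omega^1_{\check{A}/k}\to\check{A}\otimes\check{A}\to\check{A}\to\Omega^1_{\check{A}/k}[1]$ with the perfectness of both other terms yields $\check{A}\in\Perf(\check{A}^{\op}\otimes\check{A})$, so $\check{A}$ is smooth; by Remark~3.2 so is $\check{A}^{\op}$. The main technical point is the Bondal--Kapranov step, which uses that the Maurer--Cartan twisting differential on a finite direct sum of shifts of the free rank-one bimodule $\check{A}\otimes\check{A}$ exhibits the totalization as a finite iterated extension of shifts of $\check{A}\otimes\check{A}$, hence as an object of the smallest thick subcategory containing the free bimodule; verifying this organization rigorously for the specific twisting $D(dv)$ coming from the bar coderivation is where the care is needed.
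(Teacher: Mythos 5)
Your reduction of the problem to the perfection of $\Omega^1_{\check{A}/k}$ via the cotangent sequence $0\to\Omega^1_{\check{A}/k}\to\check{A}\otimes\check{A}\to\check{A}\to 0$ is a sensible alternative route to the paper's, which instead dualizes the two-sided bar complex $BA\otimes_\tau A\otimes_\tau BA$ to get an explicit perfect bimodule resolution $\check{A}\otimes_{\tau^*}A^*\otimes_{\tau^*}\check{A}\to\check{A}$. The identification of $\check{A}$ as a quasi-free DG algebra on the finite-dimensional $V=(A^+[1])^*$ is correct, and the sequence does reduce everything to $\Omega^1\in\Perf(\check{A}^{\op}\otimes\check{A})$. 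But this is exactly where the proof has a gap, and it is a genuine one, not merely a routine verification.

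The ``Bondal--Kapranov twisted-complex principle'' as you state it is false: a DG module whose underlying graded module is a finite direct sum of shifts of the free rank-one bimodule is \emph{not} automatically in the pretriangulated hull of that bimodule. The one-sided (triangularity) condition on the Maurer--Cartan twisting is essential --- that is what makes the totalization a finite iterated cone. Without it one gets ``curved''-type objects, and for these there is no perfectness in general. In the case at hand, if you split $\Omega^1_{\check{A}/k}$ as $\bigoplus_i(\check{A}\otimes\check{A})[-\deg v_i]$ and examine the twisting coming from $D(dv_j)$, you will find both components that raise the shift by one (dual to $d_A$, giving scalar entries of the twisting matrix) and components that lower or preserve it (dual to $\mu_A$); a naive ordering by degree is not strictly triangular. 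What produces the necessary triangularity is precisely the hypothesis that $A^+$ is nilpotent --- a hypothesis your argument never invokes. Dualizing the finite filtration of $A^+$ by its powers (refined by the image of the differential, as the paper does in the proof of Lemma~5.7) yields a finite filtration of $V$, hence of $\Omega^1_{\check{A}/k}$, whose subquotients have \emph{untwisted} differential and so are finite free $\check{A}^{\op}\otimes\check{A}$-modules. Supplying that filtration is not a formality; it is the content of the lemma, and without it your argument does not close. Were you to add this filtration step, the Kähler-differentials route would be a legitimate variant of the paper's proof, arguably a bit shorter since the cotangent sequence replaces the two-sided bar resolution, but the essential combinatorial input (nilpotence of $A^+$) is identical in both.
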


\begin{proof} It suffices to prove that the DG algebra $\check{A}$
is smooth. Indeed, replace $A$ by $A^{\op}.$

Let us combine the two versions of the bar complex in one. Consider
the tensor product $BA\otimes A\otimes BA$ with the differential
$$d=d_{BA}\otimes 1\otimes 1+1\otimes d_A\otimes 1+1\otimes 1\otimes
d_{BA}+t_{\tau}\otimes 1+1\otimes s_{-\tau}.$$ Then $d^2=0$ and
$BA\otimes A\otimes BA$ is a DG $(BA)^{\op}\otimes BA$-comodule in
the obvious way. We denote it by $BA\otimes _{\tau}A\otimes
_{\tau}BA.$ The map $\nu :BA\to BA\otimes A\otimes BA,$ $\nu
(b)=b_{(1)}\otimes 1\otimes b_{(2)}$ is a morphism of DG
$(BA)^{\op}\otimes BA$-comodules. Our assumption on $A$ implies that
$BA\otimes A\otimes BA$ is finite dimensional in each degree. Hence
its graded dual is $\check{A}\otimes A^*\otimes \check{A}.$ It is a
DG $\check{A}^{\op}\otimes \check{A}$-module which we denote by
$\check{A}\otimes _{\tau ^*}A^*\otimes _{\tau ^*}\check{A}.$

The dual of the morphism $\nu $ is the morphism of  DG
$\check{A}^{\op}\otimes \check{A}$-modules
$$\nu ^* :\check{A}\otimes _{\tau ^*}A^*\otimes _{\tau
^*}\check{A}\to \check{A},$$ where $\check{A}$ is the diagonal
 DG
$\check{A}^{\op}\otimes \check{A}$-module.

Notice that $\nu ^*$ is a quasi-isomorphism. Indeed, it suffices to
show that $\nu$ is such. Let $\epsilon :A\to k$ and $\eta :BA\to k$
be the augmentation and the counit respectively. Then the map $\eta
\otimes \epsilon :BA\otimes _{\tau}A\to k$ is a quasi-isomorphism.
Thus the morphism of complexes
$$\eta \otimes \epsilon \otimes 1:BA\otimes _{\tau} A\otimes
_{\tau}BA\to k\otimes BA=BA$$ is a quasi-isomorphism. But the
composition $\eta \otimes \epsilon \otimes 1\cdot \nu :BA\to BA$ is
the identity. Hence $\nu $ is a quasi-isomorphism.

We claim that $\check{A}\otimes _{\tau ^*}A^*\otimes _{\tau
^*}\check{A}$ is a perfect DG $\check{A}^{\op}\otimes
\check{A}$-module. Indeed consider the finite filtration of $A$ by
powers of the augmentation ideal and refine this filtration by the
image of the differential. (Note that $\cap _n(A^+)^n=0$ since $A^+$
is nilpotent.) This induces a filtration of the DG
$(BA)^{\op}\otimes BA$-comodule $BA\otimes _{\tau}A\otimes
_{\tau}BA$ with the subquotients being isomorphic to a direct sum of
shifted copies of $(BA)^{\op}\otimes BA$. This implies that the
subquotient of the dual filtration of $\check{A}\otimes _{\tau
^*}A^*\otimes _{\tau ^*}\check{A}$ are finite sums of free shifted
DG $\check{A}^{\op}\otimes \check{A}$-modules. That is
$\check{A}\otimes _{\tau ^*}A^*\otimes _{\tau ^*}\check{A}$ is a
perfect DG $\check{A}^{\op}\otimes \check{A}$-module. This proves
the lemma.
\end{proof}

\begin{lemma} Let $A$ be as in Proposition 5.6. Then the Kozsul functor
$K_A$ is full and faithful on the subcategory $\Perf (A).$
\end{lemma}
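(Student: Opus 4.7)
\emph{Overview and dévissage.} My plan is to reduce full-faithfulness on $\Perf (A)$ to a computation of $\Hom$-spaces at the single object $A$, and then to compute those $\Hom$-spaces on the $\check{A}^{\op}$ side using the bimodule resolution of Lemma 5.7. Since $\Perf (A)=\langle A\rangle _\infty$, the standard dévissage used in the proof of Proposition 2.6 (the full subcategories of $\Perf (A)$ consisting of objects on which $K_A$ is bijective on $\Hom$ into, or out of, a fixed object are triangulated and closed under direct summands) reduces the proposition to showing that the natural map
$$\eta _n\colon H^n(A)=\Hom _{D(A)}(A,A[n])\lto \Hom _{D(\check{A}^{\op})}(K_A(A),K_A(A)[n])$$
is an isomorphism for every $n\in \bbZ$.

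\emph{Identifying $K_A(A)$.} Since $A$ is h-projective as a DG $A$-module, $K_A(A)=A\otimes _\tau BA$, which as a DG $\check{A}^{\op}$-module is quasi-isomorphic to $k$ endowed with its augmentation action. So the goal becomes constructing a natural quasi-isomorphism of DG algebras $A \to \bR \Hom _{\check{A}^{\op}}(k,k)$ and checking that it realizes the $\eta _n$'s on cohomology.

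\emph{Koszul computation.} Here I would invoke the two-sided bimodule resolution of Lemma 5.7, $\nu ^*\colon \check{A}\otimes _{\tau ^*}A^*\otimes _{\tau ^*}\check{A} \overset{\sim}{\lto} \check{A}$. The finite filtration used in the proof of Lemma 5.7 (dual to the nilpotent filtration $A^+\supset (A^+)^2\supset \ldots$) has free $\check{A}^{\op}\otimes \check{A}$-subquotients, so the bimodule is semi-free, hence h-projective, both as a left and as a right DG $\check{A}$-module. Tensoring $\nu ^*$ on the right over $\check{A}$ with the augmentation module $k$ therefore produces a semi-free resolution
$$P:=\check{A}\otimes _{\tau ^*}A^*\twoheadrightarrow k$$
of $k$ as a left $\check{A}$-module, and
$$\bR \Hom _{\check{A}^{\op}}(k,k)\simeq \Hom _{\check{A}}(P,k)\simeq \Hom _k(A^*,k)\simeq A,$$
where the twisted differential on $P$ coming from $-\tau ^*$ reassembles into the differential of $A$ under the last identification.

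\emph{Main obstacle.} The step that requires genuine care is verifying that the quasi-isomorphism $A \simeq \bR \Hom _{\check{A}^{\op}}(k,k)$ produced via $P$ coincides, on cohomology, with the map $\bigoplus _n \eta _n$ induced by $K_A$ itself, so that the dévissage of Step~1 actually concludes. This is a naturality/bookkeeping argument amounting to chasing the universal twisting cochain $\tau$ through both constructions. The hypotheses of Proposition 5.6 enter precisely here: nilpotency of $A^+$ is what makes the filtration in Lemma 5.7 finite (so the resolution $P$ behaves like a perfect complex under dualization and the derived tensor $-\stackrel{\bL}{\otimes }_{\check{A}} k$ is computed strictly), while finite-dimensionality of $A$ is what guarantees $A^{**}\simeq A$ in the computation above.
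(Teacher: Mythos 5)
Your dévissage to the single generator $A$ and the identification $K_A(A)=A\otimes_\tau BA\simeq k$ are both correct, as is your computation that $\bR \Hom _{\check{A}^{\op}}(k,k)$ has cohomology isomorphic to $A$ via the semi-free resolution $P=\check{A}\otimes _{\tau ^*}A^*$ obtained by killing the right $\check{A}$-action on the bimodule of Lemma~5.7. But the point you yourself flag as the ``main obstacle'' is exactly where the argument stops being a proof. You have identified the abstract cohomology of $\bR \Hom _{\check{A}^{\op}}(k,k)$, but you have not shown that the specific DG algebra map
$$A=\operatorname{End}_{D(A)}(A)\lto \bR \Hom _{\check{A}^{\op}}(K_A(A),K_A(A))$$
induced by functoriality of $K_A$ is a quasi-isomorphism. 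A unital DG algebra map can fail to be a quasi-isomorphism even when source and target have isomorphic cohomology algebras, so an abstract calculation of $\Ext$-groups via a separately constructed resolution $P$ does not by itself close the argument; nothing you have written pins down the map in question.

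The paper removes this difficulty not by more bookkeeping but by a more economical choice of resolution. Instead of resolving the source $k$ h-projectively, it observes that the target $K_A(A)=A\otimes _\tau BA$ is itself h-injective as a right $\check{A}^{\op}$-module: the same finite filtration you invoke from the proof of Lemma~5.7 exhibits $A\otimes _\tau BA$ as a finite iterated extension of finite sums of shifts of $BA$, and $BA=(\check{A})^*$ is h-injective because it is the graded dual of $\check{A}$ with finite-dimensional graded pieces. Consequently $\operatorname{End}_{D(\check{A}^{\op})}(K_A(A))$ is computed by the honest endomorphism complex $\Hom _{\check{A}^{\op}}(A\otimes _\tau BA,A\otimes _\tau BA)$, and the map from $A$ is tautologically the one coming from the left $A$-action on $A\otimes _\tau BA$ — there is no second, independently built quasi-isomorphism to compare against. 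Replacing the source of the Hom by its quasi-isomorphic submodule $k$ (valid because the target is h-injective) reduces everything to the elementary identity $\Hom _{\check{A}^{\op}}(k,A\otimes _\tau BA)=A$. The moral, which is the one step missing from your write-up, is to choose the resolution so that it \emph{is} $K_A(A)$: then the naturality you were worried about becomes vacuous.
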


\begin{proof} Notice that $K_A(A)=k,$ hence it suffices to prove
that the natural map $A\to \bR \Hom _{\check{A}^{\op}}(k,k)$ is a
quasi-isomorphism.

As in the proof of Lemma 5.7 consider the filtration of $A$ by the
powers of the augmentation ideal $A^+$ refined by the image of the
differential. Then the induced filtration of the DG $BA$-comodule
$A\otimes _{\tau}BA$ has subquotients which are finite sums of
shifted copies of $BA.$ Notice that the DG $\check{A}^{\op}$-module
$BA$ is h-injective. (Indeed, $BA=(\check{A})^*$ since $BA$ is
finite dimensional in each degree.) Hence the DG
$\check{A}^{\op}$-module $A\otimes _{\tau}BA$ is h-injective so that
$$\bR \Hom _{\check{A}^{\op}}(k,k)=\Hom _{\check{A}^{\op}}(k,A\otimes
_{\tau}BA).$$ But $\Hom _{\check{A}^{\op}}(k,A\otimes _{\tau}BA)=A.$
This proves the lemma and finishes the proof of Proposition 5.6
\end{proof}

Here are some examples illustrating Proposition 5.6.

\begin{example} Let $V$ be a finite dimensional (graded) vector space
concentrated in degree zero. Consider the DG algebra $A=T
V/V^{\otimes 2}$ - the truncated tensor algebra on $V$. This DG
algebra is not smooth if $\dim V>0$. The Koszul dual DG algebra
$\check{A}$ has zero differential and is isomorphic to the tensor
algebra $T (V^*[-1])$, where $V^*[-1]$ is the dual space to $V$
placed in degree 1. This is a smooth DG algebra and the Koszul
functor $K_A$ is a categorical resolution of $D(A)$.
\end{example}

\begin{example} Let $A$ be a finite dimensional augmented algebra
(concentrated in degree zero) with the nilpotent augmentation ideal.
For example we can take the group algebra k[G] of a finite p-group G
in case the field $k$ is algebraically closed and has characteristic
p. Then again the Koszul functor $K_A$ is a categorical resolution
of $D(A).$
\end{example}

\section{Categorical resolution for schemes}

The following theorem was proved in [Rou].

\begin{theo} Let $X$ be a separated scheme of finite type over a
perfect field. Then there exists $E\in D^b(cohX)$ and $d\in \bbN$
such that $D^b(cohX)=\langle E\rangle _d.$
\end{theo}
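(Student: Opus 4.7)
The plan is to argue by Noetherian induction on the closed-support structure of $X$, combining resolution of singularities over a perfect field with the calculus of the operation $\diamond$ from Section 2.1 (recall $\langle E\rangle_a \diamond \langle F\rangle_b \subset \langle E\oplus F\rangle_{a+b}$). First I would reduce to the reduced case: if $\cN\subset\cO_X$ is the nilradical and $\cN^n=0$ (which holds since $X$ is of finite type, hence noetherian), then every $F\in D^b(\coh X)$ has a finite filtration of length $\leq n$ whose subquotients are annihilated by $\cN$, i.e.\ are pushforwards along $i:X_{\red}\hookrightarrow X$ of objects of $D^b(\coh X_{\red})$. Thus, if $E_{\red}$ strongly generates $D^b(\coh X_{\red})$ in $d'$ steps, then $i_*E_{\red}$ strongly generates $D^b(\coh X)$ in roughly $nd'$ steps.

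For reduced $X$ I would induct on $\dim X$. Because $k$ is perfect and $X$ is of finite type, the regular locus $U\subset X$ is open and dense; set $Z = X\setminus U$ with its reduced structure, so $\dim Z < \dim X$. The inductive hypothesis furnishes a strong generator $E_Z$ of $D^b(\coh Z)$, and I take $G_Z := j_*E_Z$ with $j:Z\hookrightarrow X$ the closed immersion. Separately, cover $U$ by finitely many affine opens $U_i=\Spec A_i$ with $A_i$ a finite-type regular $k$-algebra; each $A_i$ has finite global dimension, so $\cO_{U_i}$ is a strong classical generator of $D^b(\coh U_i)$. Using the Mayer--Vietoris triangle across the cover, combined inductively by the $\diamond$-calculus, one produces a strong generator $E_U$ of $D^b(\coh U)$ in a uniformly bounded number of steps.

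Given any $F\in D^b(\coh X)$, I would use the open--closed localization triangle to decompose $F$ (up to uniform bounded-step manipulations) as an extension of a piece supported on $Z$ — which is built from $G_Z$ in $O(1)$ steps by induction — by a piece whose restriction to $U$ lies in $\langle E_U\rangle_d$ for a uniform $d$. The crucial point is that each coherent extension from $U$ to $X$ exists (standard extension of coherent sheaves from an open subscheme of a noetherian scheme), and the kernel/cokernel of the restriction map is supported on $Z$, so the cost of \emph{lifting} $E_U$ to an object of $D^b(\coh X)$ is absorbed into further extensions by $G_Z$. Then $E:=G_Z\oplus(\text{chosen lifts of }E_U)$ is the desired strong generator, with $d$ obtained by summing the bounds from each of the three layers (nilpotent filtration, open-closed gluing, affine cover on $U$).

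The main obstacle is bookkeeping the \emph{uniformity} of the bound $d$ across all $F\in D^b(\coh X)$: one must ensure that the number of $\diamond$-steps needed to build $F$ from $E$ does not depend on $F$ itself, only on $X$. This is delicate for the open-closed gluing, where a priori the length of the filtration of $F$ obtained from localization could grow with $F$; controlling it requires knowing that one can choose a \emph{single} lift functor (or at worst a bounded iteration of lifts) absorbing the Ext-theoretic discrepancy between $F$ and the extension of $F|_U$, which is where the finite-dimensionality of the local cohomology $\bR\Gamma_Z$ enters and bounds the cost uniformly in terms of $\dim X$ and the global dimension of the $A_i$.
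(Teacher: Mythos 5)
The paper does not supply a proof of this theorem: it is quoted from Rouquier [Rou], so there is no internal argument to compare your sketch against. What the paper \emph{does} prove, building on Rouquier's result, is the stronger Theorem 6.3; the closest analogue of your construction inside the paper is Proposition 6.8, which builds a \emph{classical} generator of $D^b(\coh X)$ by the same kind of stratification (dense regular stratum plus lower-dimensional singular locus, exploiting perfection of $k$) together with the localization Lemma 6.10 --- but that lemma handles only $\langle\cdot\rangle$, not $\langle\cdot\rangle_d$, precisely because it sidesteps the uniformity issue you flag.

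Your broad architecture (nilpotent filtration, dense regular open via perfection, Noetherian induction on dimension, open--closed gluing) does match Rouquier's strategy, but the sketch has two genuine gaps, both at the uniformity step you already identify as the main obstacle. First, the open--closed gluing for \emph{strong} generation cannot be reduced to a lift functor. The localization $D^b_Z(\coh X)\to D^b(\coh X)\stackrel{\pi}{\to}D^b(\coh U)$ tells you that if $\pi(F)\in\langle\pi(E_U)\rangle_d$, then there is some $F'$ built from lifts of $E_U$ in at most $d$ steps that becomes isomorphic to $F$ only after passing to the Verdier quotient, i.e.\ through a roof $F\leftarrow W\to F'$ whose cones lie in $D^b_Z$. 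Turning this into a bounded statement in $D^b(\coh X)$ requires (i) absorbing the two extra cones into the $G_Z$-budget, and (ii) controlling the direct summands that arise because $\langle\cdot\rangle_d$ is closed under summands while the Verdier quotient only sees objects up to idempotent completion. Rouquier handles exactly this with a dedicated technical lemma; your sketch gestures at it with ``a single lift functor $\ldots$ absorbing the Ext-theoretic discrepancy,'' which is not a construction, and the appeal to $\bR\Gamma_Z$ does not obviously bound the number of cones (it bounds cohomological amplitude, a different invariant). Second, the Mayer--Vietoris step on the regular open $U$ is stated but not carried out: a strong generator on each affine piece $U_i$ does not automatically produce one on $U$, since the candidates live on different spaces and the intersection loci contribute. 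Rouquier proves a specific ``cocovering'' lemma to make this work, bounding the generation time of $D^b(\coh U)$ in terms of the pieces and the combinatorics of the cover; invoking ``Mayer--Vietoris combined inductively by the $\diamond$-calculus'' presupposes the uniformity it needs to establish. In short: right skeleton, essentially the Rouquier strategy, but the two load-bearing bounds (quotient step and cocovering step) are asserted rather than proved, and those are exactly the places where Rouquier's paper does the real work.
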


Denote $A=\bR \Hom (E,E).$ The theorem implies that the functor
$$\bR \Hom (E, -):D(X)\to D(A)$$
induces an equivalence of subcategories $D^b(cohX)\simeq \Perf (A).$
Consequently $\Perf (A)=\langle A\rangle _d,$ i.e. $A$ is a strong
generator for $\Perf (A).$

\begin{remark} Unlike in [Rou] we do not regard the equivalence
$D^b(cohX)\simeq \Perf (A)$ with $A$ weakly smooth (or even smooth)
as saying that "going to the DG world, $X$ becomes regular". Indeed,
according to our definition only the "big" category $D(X)$ can be
smooth or not.
\end{remark}

We are going to strengthen Rouquier's result.

\begin{theo} Let $X$ be a separated scheme of finite type over a
perfect field $k.$  Then

a) There exists a classical generator $E\in D^b(cohX)$, such that
the DG algebra $A=\bR \Hom (E,E)$ is smooth and hence the functor
$$\bR Hom (E,-):D(X)\to D(A)$$
is a categorical resolution.

b) Given any other classical generator $E^\prime \in D^b(cohX)$ with
$A^\prime =\bR \Hom (E^\prime ,E^\prime)$, the DG algebras $A$ and
$A^\prime $ are derived equivalent (hence $A^\prime$ is also smooth)
and the categorical resolutions $D(A)$ and $D(A^\prime)$ of $D(X)$
are equivalent.
\end{theo}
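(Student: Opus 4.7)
The plan divides naturally into the two parts (a) and (b).

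For part (a) --- existence of a smooth resolving DG algebra --- my strategy is to upgrade Rouquier's classical generator using the dualizing complex, then prove smoothness by an induction on a smooth stratification. First, by Theorem 6.1 fix any classical generator $G_0\in D^b(\coh X)$. I would then modify $G_0$ into a new classical generator $E\in D^b(\coh X)$ by adjoining data derived from $\omega_X^\bullet$ (for instance Grothendieck-duality pairings between stratum-wise pieces of $G_0$), so that $A=\bR\Hom(E,E)$ acquires a built-in Serre-type self-duality $A\simeq A^{\op}$ --- this is the ``flavor of Koszul duality'' advertised in the introduction.

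To show $A\in\Perf(A^{\op}\otimes A)$, pick a finite stratification of $X$ by smooth locally closed subschemes $\{X_\alpha\}$ (available since $k$ is perfect and $X$ is of finite type). On each stratum the corresponding component of $E$ gives a smooth DG algebra $A_\alpha$ via Proposition 3.13. I would then assemble the $A_\alpha$ into $A$ by iterated application of Proposition 3.11, the ``glueing bimodule'' $N$ at each step coming from the cross-stratum Ext groups controlled by $\omega_X^\bullet$. Finite-dimensionality and explicit Koszul-type resolutions furnished by the dualizing complex are precisely what make these bimodules perfect, so that the hypotheses of Proposition 3.11 apply; for the Koszul-type step I model the argument on Proposition 5.6. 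Once $A$ is smooth, the resolution property is immediate: since $E$ classically generates $D^b(\coh X)$, the discussion after Theorem 6.1 gives $\Perf(A)\simeq D^b(\coh X)$, and $\Perf(X)\subset D^b(\coh X)$, so the restriction of $\bR\Hom(E,-)$ to $\Perf(X)$ is full and faithful as required in Definition 4.1.

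For part (b), given another classical generator $E'\in D^b(\coh X)$ with $A'=\bR\Hom(E',E')$, the argument of Remark 2.7 applied to the DG $A^{\op}\otimes A'$-module $K:=\bR\Hom(E',E)$ yields a derived equivalence $(-)\stackrel{\bL}{\otimes}_A K\colon D(A)\isomoto D(A')$; by Lemma 3.9, $A'$ is then also smooth. A direct Yoneda-style check (using that $E$ and $E'$ generate $D^b(\coh X)$ and Theorem 2.2 c)) shows that this equivalence intertwines $\bR\Hom(E,-)$ with $\bR\Hom(E',-)$ as functors from $D(X)$, which is exactly the equivalence of categorical resolutions in Definition 4.3.

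The main obstacle is the global assembly in part (a): the iterated application of Proposition 3.11 requires perfectness of the inter-stratum bimodules over the smooth stratum-wise DG algebras $A_\alpha^{\op}\otimes A_\beta$, which is genuinely delicate. This is where the dualizing complex is indispensable and where the Koszul-type mechanism of Proposition 5.6 must be deployed stratum by stratum; I also expect the unbounded cohomology of $A$ flagged in the introduction to first appear precisely in these boundary contributions.
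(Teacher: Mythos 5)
Your treatment of part (b) is essentially right and matches the paper: one considers the bimodule $\bR\Hom(E',E)$, shows it induces an equivalence $\Perf(A)\to\Perf(A')$, and invokes Lemma 2.12. For part (a), however, your proposal takes a genuinely different route from the paper, and there are real gaps in it.

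The paper does not prove smoothness of $A=\bR\Hom(E,E)$ by iterated gluing via Proposition~3.11. That machinery is used for poset schemes in Section~10, where the triangular structure of the endomorphism algebra is built in by construction. Here there is no such triangularity: if $E=\bigoplus_j i_{j*}E_j$ for a stratification $\{Z_j\}$, the groups $\bR\Hom(i_{j*}E_j,i_{s*}E_s)$ do \emph{not} vanish in one direction, so $A$ is simply not a triangular algebra over the stratum-wise pieces $A_\alpha$. Consequently your inductive assembly has no hypotheses under which to start: there is no decomposition of $A$ into an upper-triangular form with a single bimodule corner to which Proposition~3.11 could be applied. Moreover, the invocation of Proposition~5.6 (Koszul duality for \emph{finite-dimensional, nonpositively graded, nilpotent augmented} DG algebras) is out of scope here; the $\bR\Hom$ complexes between strata are quasi-coherent Ext's and have none of those finiteness/nilpotence features. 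So the claim that the bimodules are perfect has no supporting argument, and the ``Serre self-duality $A\simeq A^{\op}$'' you want to build into $E$ is not used in the paper as a mechanism for smoothness --- it is deduced after the fact from part (b) applied to $E$ and $D(E)$.

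The mechanism the paper actually uses is the passage to $X\times X$. One constructs $E=\bigoplus_j i_{j*}E_j$ from a ``generating data'' (an admissible covering by reduced closed subschemes, with quasi-generators on the smooth loci of each $Z_j$) and shows it is a classical generator of $D^b(\coh X)$ (Proposition~6.8). The crucial steps are then: generating data multiply under external products (Proposition~6.14, relying on $k$ perfect), so $E\boxtimes D(E)$ is a classical generator of $D^b(\coh(X\times X))$ (Corollary~6.16); the functor $\bR\Hom(E\boxtimes D(E),-)$ identifies $D^b(\coh(X\times X))$ with $\Perf(A^{\op}\otimes A)$ and takes $\Delta_*D_X$ to the diagonal bimodule $A$ (Proposition~6.17). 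Smoothness of $A$ is then automatic, since $\Delta_*D_X$ lies in $D^b(\coh(X\times X))$. This is where the dualizing complex enters: it is needed to identify $\bR\Hom(D(E),D(E))$ with $A^{\op}$ and to land on the diagonal module. Your proposal never produces the equivalence $D^b(\coh(X\times X))\simeq\Perf(A^{\op}\otimes A)$, which is the key lemma, and cannot recover it from stratum-wise data alone.
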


\begin{proof} Let us first prove b) assuming a):

The functors $\bR \Hom (E,-),$ $\bR \Hom (E^\prime ,-)$ induce
respective equivalences $D^b(cohX)\simeq \Perf (A),$ $D^b(coh
X)\simeq \Perf (A^\prime).$ Consider the DG $A^\prime\otimes
A^{\op}$-module $\bR \Hom (E^\prime ,E)$ and the obvious morphism of
functors from $D^b(cohX)$ to $\Perf (A^\prime)$
$$\mu :\bR \Hom (E,-)\stackrel{\bL}{\otimes }_A\bR \Hom (E^\prime
,E)\to \bR \Hom (E^\prime ,-).$$ Then $\mu (E)$ is an isomorphism,
hence $\mu$ is an isomorphism. This implies that the functor
$$(-)\stackrel{\bL}{\otimes }_A\bR \Hom (E^\prime ,E):D(A)\to D(A^\prime)$$ induces an equivalence $\Perf
(A)\stackrel{\sim}{\to}\Perf (A^\prime).$ Thus it is an equivalence
by Lemma 2.12, so that $A$ and $A^\prime$ are derived equivalent and
the categorical resolutions $D(A)$ and $D(A^\prime)$ of $D(X)$ are
equivalent (Definition 4.3).

The proof of part a) requires some preparation. All schemes are
assumed to be $k$-schemes.

For a scheme of finite type $Z$ we denote by $Z^{\red}$ (resp.
$Z^{\ns}$, resp. $Z^{\sg}$) the scheme $Z$ with the reduced
structure (resp. the open subscheme of regular points, resp. the
closed subscheme of singular points).

\begin{defi} Let $Y$ be a scheme of finite type. An {\rm admissible covering} of
$Y$ is a finite collection of closed reduced subschemes $\{ Z_j\}$
such that the following set theoretical conditions hold

a) $Y=\cup Z_j,$

b) for every $j$ $$Z_j^{\sg} \subset \bigcup _{\{ s\vert Z_s\subset
Z_j\}}Z_s^{\ns}.$$
\end{defi}

\begin{example} For each scheme of finite type $Y$ there exists a  canonical admissible
covering: $Z_1=Y^{\red},$ $Z_{j+1}=(Z_j^{\sg})^{red}.$
\end{example}

\begin{defi} Let $Z$ be a reduced scheme of finite type.
We call $F\in D^b(cohZ)$ a
 {\rm quasi-generator} for $D(Z)$ if $F\vert _{Z^{\ns}}$
 is a compact generator
 for $D(Z^{\ns}).$
 \end{defi}

 For example if $Z$ is a reduced separated scheme of finite type and
$F\in \Perf (Z)$ is a generator for $D(Z)$ (Theorem 2.10b)), then it
is a quasi-generator.

\begin{defi} A {\rm generating data} on a  scheme of finite type
$Y$ is a collection $\{ Z_j,E_j\},$ where $\{Z_j\}$ is an admissible
covering of $Y$ and
 $E_j\in D^b(cohZ_j)$ is a quasi-generator for $D(Z_j)$ for each
 $j.$
\end{defi}

If $Y$ is a separated scheme of finite type, then it admits a
generating data. Indeed, we can take the canonical admissible
covering $\{Z_j\}$ as in Example 6.5 above, with $E_j\in \Perf
(Z_j)$ being a compact generator for $D(Z_j).$

\begin{prop} Let $Y$ be a separated scheme of finite type
with a generating data $\{Z_j,E_j\}.$ Let $i_j:Z_j\to Y$ be the
corresponding closed embedding. Then $$E:=\bigoplus _ji_{j*}E_j$$ is
a classical generator for $D^b(cohX).$
\end{prop}

\begin{proof} For a noetherian scheme $S$ and a closed subset $W\subset S$
we denote as usual by $D^b_W(cohS)$ the full subcategory of
$D^b(cohS)$ consisting of complexes whose cohomology sheaves are
supported on $W.$

We may assume that $ Z_i\subset Z_j$  implies that $i<j.$ Define the
closed subsets $W_j:=\cup _{s\leq j}Z_s.$ It suffices to prove for
each $j$ the following assertion
\medskip

$(*_j):$ $\quad$ The object $\bigoplus _{s\leq j}i_{s*}E_s$ is a
classical generator for the category $D_{W_j}^b(cohY).$
\medskip

Let us prove these assertions $(*_j)$ by induction on $j.$

$\underline{j=1}.$ We have $Z_1^{\ns}=Z_1,$ hence $E_1$ is a
classical generator for $D^b(cohZ_1)=\Perf (Z_1)=D(Z_1)^c$ (Theorem
2.2 b), Theorem 2.10a)).

\begin{lemma} Let $T$ be a separated noetherian scheme and $i:Z\to T$ be the embedding of
a reduced closed subscheme. Let $F\in D^b(cohZ)$ be a classical
generator. Then $i_*F$ is a classical generator for the category
$D^b_Z(cohT).$
\end{lemma}

\begin{proof} This follows from Lemmas 7.37, 7.41 in [Rou].
\end{proof}

Thus $i_{1*}E_1$ is a classical generator of
$D_{Z_1}^b(cohY)=D_{W_1}^b(cohY).$

$\underline{j-1\Rightarrow j}.$ Consider the following localization
sequence of triangulated categories
$$D_{W_{j-1}}^b(cohY)\to D_{W_j}^b(cohY)\to
D_{W_j-W_{j-1}}^b(coh(Y-W_{j-1})).$$

By our assumption $W_j-W_{j-1}\subset Z_j^{\ns}$ and $E_j\vert
_{Z_j^{\ns}}$  is a compact generator for $D(Z_j^{\ns})$, hence a
 classical generator for $D^b(cohZ_j^{\ns})=\Perf (Z_j)$. Since $W_j-W_{j-1}$ is an open subset
 of the scheme $Z_j^{\ns},$ we may consider it with the induced (reduced) scheme structure.
 Then
$E_j\vert _{W_j-W_{j-1}}$ is a classical generator for
$D^b(coh(W_j-W_{j-1}))=\Perf (W_j-W_{j-1}).$  So by Lemma 6.9
$(i_{j*}E_j)\vert _{Y-W_{j-1}}$ is a classical generator for
$D^b_{W_j-W_{j-1}}(coh(Y-W_{j-1})).$ Now the next Lemma 6.10 and the
induction hypothesis imply that
$$D^b_{W_j}(cohY)=\langle \bigoplus _{s\leq j}i_{s*}E_j\rangle,$$
which completes the induction step and proves the proposition.
\end{proof}

\begin{lemma} Let $\cS \to \cT \stackrel{\pi}{\to} \cT/\cS$ be a localization
sequence of triangulated categories. Let $G_1\subset \cS$ and
$G_2\subset \cT$ be subsets of objects such that $\cS =\langle
G_1\rangle$ and $\cT /\cS =\langle \pi (G_2)\rangle.$ Then $\cT
=\langle G_1\cup G_2\rangle .$
\end{lemma}

\begin{proof} Denote $\cT ^\prime :=\langle G_1\cup G_2\rangle \subset
T.$ Then $\cT ^\prime$ is by definition closed under direct
summands. It suffices to prove that $\cT /\cT ^\prime =0.$ But $\cS
\subset T^\prime \subset T.$ Hence $\cT/\cT ^\prime \simeq (\cT
/\cS)/(\cT ^\prime /\cS),$ and $\cT /\cS =\langle \pi (G_2)\rangle
\subset \cT ^\prime /\cS.$ Thus $\cT /\cT ^\prime =0.$
\end{proof}

In Proposition 6.8 above we have constructed a special classical
generator $E$ for the category $D^b(cohY).$ We will show that the DG
algebra $\bR \Hom (E,E)$ is smooth (if $k$ is perfect). This will
complete the proof of Theorem 6.3.

For a scheme of finite type $Y$ denote by $D_Y\in D^b(cohY)$ a
dualizing complex on $Y$ (which exists and is unique up to a shift
and a twist by a line bundle on each connected component of $Y$,
[Ha2],VI,Thm.3.1,\S 10 ), so that the functor
$$D(-):=\bR \Hom (-,D_Y):D^b(cohY)\to D^b(cohY)$$
is an anti-involution.
 Clearly, if $E$ is a classical generator
for $D^b(cohY),$ then so is $D(E).$ Recall that the duality commutes
with direct image functors under proper morphisms. In particular, if
$i:Z\to Y$ is a closed embedding and $F\in D^b(cohZ)$, then
$$i_*D(F)\simeq D(i_*F).$$
(Here one should take $D_Z=i^!D_Y.$, [Ha2],III,Thm.6.7;V,Prop.2.4.)

\begin{lemma} Let $\{Z_j,E_j\}$ be a generating data on a scheme of
finite type $Y.$ Then so is $\{Z_j, D(E_j)\}.$
\end{lemma}

\begin{proof} Fix $Z_j.$ We need to show that $D(E_j)\vert
_{Z_j^{ns}}$ is a compact generator of $D(Z_j^{\ns}).$ We have
 $D(E_j)\vert _{Z_j^{ns}}=D(E_j\vert _{Z_j^{\ns}}),$ hence the
 assertion follows from the next lemma.
 \end{proof}

 \begin{lemma} Assume that $W$ is a smooth scheme of finite type and
 $F\in \Perf
 (W)$ is a compact generator for $D(W).$ Then so is $D(F).$
 \end{lemma}

 \begin{proof} Since $W$ is regular, $\cO_W$ is a dualizing complex
 on $W.$ The functor $\bR \Hom (-,\cO _W)$ induces an
 anti-involution of the subcategory $\Perf (W).$ The lemma follows.
 \end{proof}

\begin{defi} Let $Y$ be a separated scheme of finite type with a
generating data $\{ Z_j, E_j\}.$  We call $\{Z_j, D(E_j)\}$ the {\it
dual generating data}.  We have $\oplus i_{j*}D(E_j)=D(\oplus
i_{j*}E_j),$ hence the dual generating data produces the dual
generator of $D^b(cohY).$
\end{defi}

\begin{prop} Assume that the field $k$ is perfect. Let $S,Y$ be
separated schemes of finite type. Let $\{ Z_j,E_j\}$ (resp. $\{ W_s,
F_s\}$) be a generating data on $S$ (resp. on $Y$). Then $\{
Z_j\times W_s, E_j\boxtimes F_s\}$ is a generating data for $S\times
Y.$
\end{prop}

\begin{proof} We need a lemma.

\begin{lemma}  Let $k$ be a perfect field, $A,B$ -
noetherian $k$-algebras. Assume that $A$ and $B$ are reduced. Then
so is $A\otimes B.$
\end{lemma}

\begin{proof} Let $p_1,...p_n\subset A$ (resp.
$q_1,...,q_m\subset B$) be the minimal primes. Then by our
assumption $A\subset \prod A/p_i,$ $B\subset \prod B/q_j.$ Hence
also $A\otimes B\subset \prod (A/p_i\otimes B/q_j).$ Therefore we
may assume that $A$ and $B$ are integral domains.

The algebra $A$ is the union of its finitely generated
$k$-subalgebras $A=\cup A_i,$ and $A\otimes B=\cup (A_i\otimes B).$
So we may assume that $A$ is finitely generated. Also, replacing $B$
by its fraction field, we may assume that $B$ is a field. Then by
Exercise II, 3.14 in [Ha1] it suffices to prove that the algebra
$A\otimes \overline{k}$ is reduced. But this algebra is the union of
its subalgebras which are etale over $A$ (since the field $k$ is
perfect). Therefore it is reduced. This proves the lemma.
\end{proof}

The lemma implies that for each $j,s$ the scheme $Z_j\times W_s$ is
a closed reduced subscheme of $S\times Y.$ Clearly
$$S\times Y=\bigcup _{j,s}Z_j\times W_s.$$
 By Proposition 3.8a) for each $j,s$
 $Z_j^{\ns}\times W_s^{\ns}\subset (Z_j\times W_s)^{\ns}.$
Actually the two schemes are equal. Indeed, let $x\in Z_j$ be a
point and $B$ the corresponding local ring. Let $y\in Z_j\times W_s$
be a nonsingular point lying over $x$ with the corresponding local
ring $C.$ Then $C$ is a flat over $B.$ Hence by [Gro],Prop.17.3.3 or
by [Ma],Thm.23.7i) $B$ is also regular.

Therefore
$$(Z_j\times W_s)^{\sg}=(Z_j^{\sg}\times W_s)\cup (Z_j\times
W_s^{\sg}).$$
 This implies that $\{Z_j\times W_s\}$
is an admissible covering of $X\times Y.$

We have
$$(E_j\boxtimes F_s)\vert _{(Z_j\times W_s)^{\ns}}=
(E_j\boxtimes F_s)\vert _{Z_j^{\ns}\times W_s^{\ns}}= (E_j\vert
_{Z_j^{\ns}})\boxtimes (F_s\vert _{W_s^{\ns}}).$$ Since $E_j\vert_{
Z_j^{\ns}}$ and $F_s\vert _{W_s^{\ns}}$ are compact generators of
$D(Z_j^{\ns})$ and $D(W_s^{\ns})$ respectively, then $(E_j\boxtimes
F_s)\vert _{(Z_j\times W_s)^{\ns}}$ is a compact generator by Lemma
3.14. This proves the proposition.
\end{proof}

\begin{cor} Let $\{Z_j,E_j\}$ be a generating data on a separated
scheme of finite type $X.$ Let $i_j:Z_j\to X$ denote the
corresponding closed embedding. Then $\{ Z_j\times Z_s, E_j\boxtimes
D(E_s)\}$ is a generating data on $X\times X.$ In particular, if
$E=\oplus _ji_{j*}E_j,$ then $E\boxtimes D(E)$ is a classical
generator for $D^b(coh(X\times X)).$
\end{cor}

\begin{proof} Follows from Lemma 6.11 and Proposition 6.14.
\end{proof}

\begin{prop}Let $Y$ be a separated scheme of finite type over a perfect field $k.$ Choose
a classical generator $E$ of $D^b(cohY)$ as in Proposition 6.8 above
and denote $A=\bR \Hom (E,E).$ Let $D(E)$ be the dual generator.
Then there exist canonical quasi-isomorphisms of DG algebras

a) $\bR \Hom (D(E),D(E))\simeq A^{\op},$

b) $\bR \Hom (D(E)\boxtimes E, D(E)\boxtimes E)\simeq A^{\op}\otimes
A.$

Let $\Delta :Y\to Y\times Y$ be the diagonal closed embedding.

c) There exists a canonical equivalence of categories
$D^b(coh(Y\times Y))\simeq \Perf (A^{\op}\otimes A )$ which takes
the object $\Delta _*(D_Y)$ to the diagonal DG $A ^{\op}\otimes A
$-module $A.$ In particular the DG algebra $A$ is smooth.
\end{prop}

We prove this proposition in Subsection 6.1 below.

Part a) of Theorem 6.3 now follows. Indeed, let $E$ be a classical
generator for $D^b(cohX)$ as in Proposition 6.8, then by Proposition
6.17 the DG algebra $A=\bR \Hom (E,E)$ is smooth.
\end{proof}

\subsection{Proof of Proposition 6.17}

a). Since $D:D^b(cohY)\to D^b(cohY)$ is an anti-involution the map
$$D:\Ext (E,E)\to \Ext  (D(E),D(E))$$
is an isomorphism. Choose h-injective resolutions $E\to I,$ $D_Y\to
J,$ so that $A=\Hom (I,I)$ and $D(E)=\cH om (I,J).$ Let $\rho :\cH
om (I,J)\to K$ be an h-injective resolution, so that $B:=\Hom
(K,K)=\bR \Hom (D(E),D(E)).$ We have the natural homomorphism of DG
algebras
$$\epsilon :A^{\op}\to \Hom (\cH om (I,J),\cH om (I,J))$$
such that the composition of $\epsilon$ with the map
$$\Hom (\cH om(I,J),\cH om(I,J))\stackrel{\rho _*}{\to}\Hom (\cH
om(I,J),K)$$ is a quasi-isomorphism (since this composition induces
the map $D$ above between the $\Ext$-groups). Notice also that the
map $\rho ^*:B\to \Hom (\cH om (I,J),K)$ is a quasi-isomorphism. It
follows from Lemma 2.14 that the DG algebra
$$\left(
    \begin{array}{cc}
      B & \Hom (\cH om (I,J)[1],K) \\
      0 & A^{\op} \\
    \end{array}
  \right)
$$
(where the differential is defined using the above maps) is
quasi-isomorphic to DG algebras $B$ and $A^{\op}$ by the obvious
projections. This proves a).

b). The proof is similar and we will use the same notation. In
addition to resolutions $E\to I,$ $D(E)\to K$ choose an h-injective
resolution $\sigma :D(E)\boxtimes E\to L,$ so that $\bR \Hom
(D(E)\boxtimes E,D(E)\boxtimes E)=\Hom (L,L).$ We need a couple of
lemmas.

\begin{lemma} The obvious morphism of sheaves of DG algebras on
$Y\times Y$
$$\cH om (K,K)\boxtimes \cH om (I,I)\to \cH om (K\boxtimes
I,K\boxtimes I)$$ is a quasi-isomorphism.
\end{lemma}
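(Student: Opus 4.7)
The statement is local on $Y\times Y$, so we may restrict to an affine open. Since $I$ and $K$ are h-injective resolutions of $E$ and $D(E)$ (objects of $D^b(\coh Y)$), the underived complexes $\cH om(I,I)$ and $\cH om(K,K)$ compute the derived sheaf-Homs $\bR\cH om(E,E)$ and $\bR\cH om(D(E),D(E))$ in $D(Y)$. The projections $p_i\colon Y\times Y\to Y$ are flat on quasi-coherent sheaves, so the underived exterior tensor product preserves quasi-isomorphisms; hence $\cH om(K,K)\boxtimes\cH om(I,I)$ represents $\bR\cH om(D(E),D(E))\stackrel{\bL}{\boxtimes}\bR\cH om(E,E)$, and $K\boxtimes I$ represents $D(E)\stackrel{\bL}{\boxtimes}E$ in the appropriate derived categories.

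Composing with an h-injective resolution of $K\boxtimes I$ identifies the morphism of the lemma, in $D(Y\times Y)$, with the canonical K\"unneth map
\[
\mu\colon\;\bR\cH om(D(E),D(E))\stackrel{\bL}{\boxtimes}\bR\cH om(E,E)\;\longrightarrow\;\bR\cH om\!\bigl(D(E)\stackrel{\bL}{\boxtimes}E,\,D(E)\stackrel{\bL}{\boxtimes}E\bigr).
\]
Thus the lemma is reduced to showing that $\mu$ is a quasi-isomorphism.

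To prove this I would exploit Grothendieck duality. Using the identifications $D(E)=\bR\cH om(E,D_Y)$ and $E\simeq \bR\cH om(D(E),D_Y)$, together with the K\"unneth splitting of the dualizing complex $D_{Y\times Y}\simeq D_Y\boxtimes D_Y$ (valid because $k$ is perfect, cf.~[Ha2], V), one may rewrite both sides of $\mu$ as derived functors involving only exterior products and the duality $D$ on $Y\times Y$. The formal compatibility of Grothendieck duality with exterior products then makes the two expressions match, yielding the quasi-isomorphism. This approach sidesteps the naive K\"unneth identity $\bR\cH om(F,G)\simeq F^\vee\stackrel{\bL}{\otimes}G$, which requires $F$ to be perfect and is therefore unavailable when $Y$ is singular.

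The main obstacle is precisely this failure of the perfect-complex identity for non-perfect $E$ and $D(E)$: the argument cannot proceed via local freeness and must instead be threaded through the dualizing complex. The essential input is the splitting $D_{Y\times Y}\simeq D_Y\boxtimes D_Y$, whose proof uses the perfectness of $k$ (via Proposition~3.8a to ensure $Y\times Y$ is a well-behaved scheme of finite type carrying a dualizing complex). Once this is in hand, the quasi-isomorphism $\mu$ follows by formal manipulations of $\bR\cH om$, $\stackrel{\bL}{\boxtimes}$ and $D$, with Lemma 2.14 available if a further identification of DG algebras is required downstream.
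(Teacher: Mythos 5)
Your reduction to the derived-category K\"unneth map $\mu$ is fine (and the exactness of $\boxtimes$ here is really just the exactness of $\otimes_k$ over the base field), but the proposal then stalls: you never actually prove that $\mu$ is a quasi-isomorphism, you only suggest that ``formal manipulations'' with Grothendieck duality and the splitting $D_{Y\times Y}\simeq D_Y\boxtimes D_Y$ would do it. They don't, at least not formally. To exploit $D_{Y\times Y}\simeq D_Y\boxtimes D_Y$ you would at some point need an identity of the shape $\bR\cHom(A,D_Y)\boxtimes\bR\cHom(B,D_Y)\simeq\bR\cHom(A\boxtimes B,D_Y\boxtimes D_Y)$, and that is itself an instance of the very K\"unneth statement you are trying to prove; nothing in Grothendieck duality gives it for free when $A,B$ are not perfect. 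So the argument is circular, or at best pushes the difficulty around without resolving it.

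More importantly, the premise that ``the argument cannot proceed via local freeness'' is wrong, and that is precisely where the actual proof lives. Non-perfectness of $E$ means you cannot take a \emph{bounded} free resolution, but over a noetherian affine open $\Spec B$ any object of $D^b(\coh)$ (in particular $E$ and $D(E)$) has a bounded-\emph{above} resolution $P$ (resp.\ $Q$) by free $B$-modules of finite rank, and a bounded-\emph{below} resolution $M$ (resp.\ $N$) by injectives. Because $P,Q$ are bounded above and $M,N$ bounded below, each graded piece of $\Hom_B(P,M)$, $\Hom_B(Q,N)$ and $\Hom_{B\otimes B}(P\otimes Q,M\otimes N)$ involves only finitely many indices, and because the $P^i,Q^j$ are free of finite rank, the natural map
\[
\Hom_B(P,M)\otimes_k\Hom_B(Q,N)\longrightarrow\Hom_{B\otimes B}(P\otimes Q,M\otimes N)
\]
is an \emph{isomorphism} of complexes, reducing to $\Hom_B(B,S)\otimes\Hom_B(B,T)=S\otimes T=\Hom_{B\otimes B}(B\otimes B,S\otimes T)$. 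That is the entire proof. Your proposal both misdiagnoses the obstacle and replaces a two-line local computation with an incomplete duality argument; you should redo it along the lines just indicated.
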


\begin{proof} The question is local so we may assume that $Y=SpecB$
for some noetherian $k$-algebra B. Then we can find bounded above
complexes $P,Q$ of free $B$-modules of finite rank which are
quasi-isomorphic to $D(E)$ and $E$ respectively. Similarly, we can
find bounded below complexes $M,N$ of injective $B$-modules which
are quasi-isomorphic to $D(E)$ and $E$ respectively. It suffices to
prove that the corresponding map
$$\Hom _B(P,M)\otimes \Hom _B(Q,N)\to \Hom _{B\otimes B}(P\otimes
Q,M\otimes N)$$ is an isomorphism. This follows from the formula
$$\Hom _B(B,S)\otimes \Hom _B(B,T)=S\otimes T=\Hom _{B\otimes
B}(B\otimes B,S\otimes T)$$ for any $B$-modules $S,T.$
\end{proof}

\begin{lemma} $\bR \Gamma (\cH om (I,I))=\Gamma (\cH om (I,I)),$
$\bR \Gamma (\cH om (K,K)) =\Gamma (\cH om (K,K)).$
\end{lemma}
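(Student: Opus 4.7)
The strategy is to check, via a careful choice of the h-injective resolutions, that both $\Gamma(\cH om(I,I))$ and $\bR\Gamma(\cH om(I,I))$ compute the same complex---namely $\bR\Hom_{\O_Y}(E,E)$---and hence coincide. The identical argument will handle the corresponding claim for $\cH om(K,K)$.

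First I would refine the choice of the resolution $E\to I$. Since $Y$ is noetherian and separated and $E\in D^b(\coh Y)$, one can take $I$ to be a bounded-below complex of injective objects of $\Qcoh Y$. On a noetherian scheme, injective quasi-coherent sheaves are also injective as $\O_Y$-modules (a standard result of Hartshorne), so $I$ becomes a bounded-below complex of injectives in $\O_Y\text{-Mod}$, and is in particular h-injective in the larger category $C(\O_Y\text{-Mod})$. I would impose the analogous refinement on the resolution $D(E)\to K$.

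Next I would compute the two sides. Directly from the definition of the internal hom sheaf, $\Gamma(\cH om(I,I)) = \Hom_{\O_Y}(I,I)$ as a complex of $k$-vector spaces. For the derived version, h-injectivity of $I$ in $C(\O_Y\text{-Mod})$ implies $\cH om(-,I)=\bR\cH om(-,I)$ as functors, and combining this with the standard identity $\bR\Hom_{\O_Y}(-,-)=\bR\Gamma\circ\bR\cH om(-,-)$ yields
$$\bR\Gamma(\cH om(I,I)) \;=\; \bR\Hom_{\O_Y}(I,I) \;=\; \Hom_{\O_Y}(I,I),$$
where the last equality uses h-injectivity of $I$ once more. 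Comparing the two expressions produces the desired equality.

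The main obstacle is the refinement step: one must verify that the h-injective resolutions $I$ and $K$ can indeed be chosen as bounded-below complexes of injective quasi-coherent sheaves, and that such a choice remains h-injective in the ambient category $C(\O_Y\text{-Mod})$ rather than merely in $C(\Qcoh Y)$. Both points rely essentially on $Y$ being noetherian together with the boundedness and coherence of $E$ and $D(E)$; once they are in hand, the rest of the argument is formal.
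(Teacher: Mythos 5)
Your proof is correct and takes essentially the same route as the paper: both arguments reduce to the observation that on a noetherian scheme $E\in D^b(\coh Y)$ admits a bounded-below resolution by sheaves that are injective in $\Mod_{\cO_Y}$ (via Hartshorne II.7.18), and both then invoke Spaltenstein's machinery for unbounded complexes — you do so via the composite identity $\bR\Hom=\bR\Gamma\circ\bR\cHom$, while the paper cites Spaltenstein's Props.\ 5.14 and 6.7 directly to conclude that $\cHom(I,I')$ is weakly injective and hence $\Gamma$-acyclic. The only cosmetic difference is that you refine the original choice of $I$, whereas the paper keeps $I$ and compares it to a homotopy-equivalent $I'$ of the desired form, which avoids having to revisit the earlier choices made in the proof of Proposition 6.17.
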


\begin{proof} It suffices to prove the first assertion.
Since $I$ is quasi-isomorphic to a bounded complex we can find a
quasi-isomorphism $\theta :I\to I^\prime,$ where $I^\prime$ is a
bounded below complex of injective quasi-coherent sheaves which are
also injective in the category $\Mod _{\cO _Y}$ of {\it all} $\cO
_Y$-modules [Ha],II,Thm.7.18. Both $I$ and $I^\prime$ are
h-injective in $D(Y),$ so the map $\theta$ is a homotopy
equivalence. Hence also $\theta _*:\cH om (I,I)\to \cH om
(I,I^\prime)$ is a homotopy equivalence. So it suffices to prove
that $\bR \Gamma (\cH om (I,I^\prime ))=\Gamma (\cH om (I,I^\prime
)).$ The complex $I^\prime $ is h-injective in the category $C(\Mod
_{\cO _Y}),$ hence $\cH om (I,I^\prime)$ is {\it weakly injective}
in this category in the terminology of [Sp],Prop.5.14. Hence $\bR
\Gamma (\cH om (I,I^\prime ))=\Gamma (\cH om (I,I^\prime ))$ by
Proposition 6.7 in [Sp].
\end{proof}

Recall the Kunneth formula [Lip],Th.3.10.3: the natural map
$$\bR \Gamma (S)\otimes \bR \Gamma (T)\to \bR \Gamma (S\boxtimes
T)$$ is a quasi-isomorphism for all $S,T\in D(Y).$ Applying this to
$S=\cH om (K,K),\ T=\cH om (I,I)$ and using Lemmas 6.18 and 6.19 we
conclude that the composition of the homomorphism of DG algebras
$B\otimes A \to \Hom (K\boxtimes I,K\boxtimes I)$ with the map
$\sigma _*:\Hom (K\boxtimes I,K\boxtimes I)\to \Hom (K\boxtimes
I,L)$ is a quasi-isomorphism. Now as in the proof of part a) we
conclude that the DG algebra
$$\left(
    \begin{array}{cc}
      \Hom (L,L) & \Hom (\cH om (K\boxtimes I)[1],L) \\
      0 & B\otimes A \\
    \end{array}
  \right)
$$
is  quasi-isomorphic to both $\Hom (L,L)$ and $B\otimes A.$ But
$B\simeq A^{\op}$ by a), which proves b).

c). We still use the same notation. By definition $I$ is a DG
$A^{\op}$-module (more precisely, a sheaf of DG $A^{\op}$-modules),
hence $\cH om (I,J)$ is a DG $A$-module via the action on $I.$ It
follows that
$$\Psi (-):=\bR \Hom (\cH om (I,J)\boxtimes I,-)$$
is a functor from $D(Y\times Y)$ to $D(A^{\op}\otimes A).$ We claim
that $\Psi$ induces an equivalence between $D^b(coh(Y\times Y))$ and
$\Perf (A^{\op}\otimes A).$ Indeed, by Corollary 6.16 $L$ is a
classical generator for $D^b(coh(Y\times Y))$. Hence it suffices to
show that $\Psi (L)=A^{\op}\otimes A.$ Consider the commutative
diagram
$$\begin{array}{ccccc}
          B\otimes A & \to  & \Hom (K\boxtimes I,K\boxtimes I) & \stackrel{\sigma _*}{\to} & \Hom (K\boxtimes I,L) \\
          \downarrow & & \downarrow & & \downarrow \\
          \Hom (\cH om (I,J),K)\otimes A & \to  & \Hom (\cH om (I,J)\boxtimes I,K\boxtimes I) & \to  & \Hom (\cH om (I,J)\boxtimes I,L)
        \end{array}
$$
where the maps in the top row were considered in the proof of b)
(and the composition is a quasi-isomorphism), and the vertical
arrows are induced by the quasi-isomorphism $\cH om (I,J)\to K.$ At
least the left and right vertical arrows are quasi-isomorphisms.
Thus the composition of arrows in the bottom row (which are maps of
DG $A^{\op}\otimes A$-modules) is a quasi-isomorphism. Now recall
the quasi-isomorphism of DG $A^{\op}$-modules $A^{\op}\to \Hom (\cH
om (I,J),K)$ from the proof of a). As a result we obtain a
quasi-isomorphism of DG $A^{\op}\otimes A$-modules
$$A^{\op}\otimes A\to \Hom (\cH om (I,J),K)\otimes A\to \Hom (\cH om
(I,J)\boxtimes I,L)=\Psi (L)$$ as required.

Now it is easy to see that $\Psi (\Delta _*D_Y)=A$ (with the
diagonal DG $A^{\op}\otimes A$-module structure). Namely, denote by
$Y\stackrel{p}{\leftarrow} Y\times Y \stackrel{q}{\to} Y$ the two
projections. Then
$$\begin{array}{rcl}
  \Psi (\Delta _*D_Y) & = & \bR \Hom (\cH om (I,J)\boxtimes I,\Delta _*D_Y) \\
   & = & \bR \Hom (p^*I, \bR \cH om (q^* \cH om (I,J),\Delta _*D_Y)) \\
   & = &  \bR \Hom (p^*I, \Delta _*\cH om (\bL \Delta ^* q^* \cH om (I,J),J))\\
   & = & \bR \Hom (p^*I, \Delta _*\cH om (\cH om (I,J),J)) \\
   & = & \bR \Hom (\bL \Delta ^* p^*I, \cH om ( \cH om (I,J),J)) \\
   & = & \bR \Hom (I, \cH om ( \cH om (I,J),J))
\end{array}
$$
Note that all these equalities are quasi-isomorphisms of DG
$A^{\op}\otimes A$-modules. Note also that the natural map $I\to \cH
om (\cH om (I,J),J)$ is a quasi-isomorphism of DG $A^{\op}$-modules.
Hence we obtain a quasi-isomorphism of DG $A^{\op}\otimes A$-modules
$$\bR \Hom (I, \cH om ( \cH om (I,J),J))=\Hom (I,I)=A$$
as required. This proves c) and the proposition.

The proof of Proposition 6.17 gives more than stated. Namely, using
similar arguments we obtain the following result.

\begin{prop} Let $Y,Z$ be noetherian $k$-schemes, $F_1,F_2\in
D^b(cohY),$ $G_1,G_2\in D^b(cohZ).$

a) There exists a natural quasi-isomorphism of complexes
$$\bR \Hom (F_1,F_2)\otimes \bR \Hom (G_1,G_2)\simeq \bR \Hom
(F_1\boxtimes G_1,F_2\boxtimes G_2).$$

b) There exists a natural quasi-isomorphism of DG algebras
$$\bR \Hom (F_1,F_1)\otimes \bR \Hom (G_1,G_1)\simeq \bR \Hom
(F_1\boxtimes G_1,F_1\boxtimes G_1).$$
\end{prop}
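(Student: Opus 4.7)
The plan is to mimic essentially verbatim the argument given in the proof of Proposition 6.17(b), now applied to four arbitrary objects $F_1,F_2\in D^b(\coh Y)$ and $G_1,G_2\in D^b(\coh Z)$ instead of the specific pairs used there. The whole computation was already factored through three ingredients: a local sheaf-$\cH om$ Künneth statement, a ``derived sections equal sections'' vanishing statement, and the global Künneth formula of [Lip]; none of them used the specific form $F_1=F_2=D(E)$, $G_1=G_2=E$.

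First I would choose h-injective resolutions $F_i\to I_i$ in $C^{\dg}(\coh Y)$ and $G_i\to J_i$ in $C^{\dg}(\coh Z)$, where $I_i,J_i$ can be taken bounded below with components injective in the category of all $\cO$-modules (as in the proof of Lemma 6.19, using [Ha],II,Thm.7.18). Then by definition
\[
\bR\Hom(F_1,F_2)=\Hom(I_1,I_2)=\bR\Gamma(\cH om(I_1,I_2)),\qquad
\bR\Hom(G_1,G_2)=\Hom(J_1,J_2)=\bR\Gamma(\cH om(J_1,J_2)),
\]
and we must compare these with $\bR\Hom(F_1\boxtimes G_1,F_2\boxtimes G_2)$, which is computed via any h-injective resolution $\sigma:I_1\boxtimes J_1\to L$.

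Second I would establish the local sheaf-$\cH om$ Künneth quasi-isomorphism
\[
\cH om(I_1,I_2)\boxtimes \cH om(J_1,J_2)\lto \cH om(I_1\boxtimes J_1,I_2\boxtimes J_2),
\]
by literally the argument of Lemma 6.18: reduce to affine $Y=\Spec B$, $Z=\Spec B'$, represent $F_i$ by bounded above complexes of finite free modules $P_i$ and $F_i\to I_i$ by bounded below complexes of injectives, and likewise for the $G_i$; then the desired map is induced by the isomorphism $\Hom_B(B,S)\otimes\Hom_{B'}(B',T)=S\otimes T=\Hom_{B\otimes B'}(B\otimes B',S\otimes T)$. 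Next, reprove the analog of Lemma 6.19 for the four-object version: $\bR\Gamma(\cH om(I_1,I_2))=\Gamma(\cH om(I_1,I_2))$ (same replacement $I_2\to I'_2$ by a bounded below complex of $\cO_Y$-injectives, so that $\cH om(I_1,I'_2)$ is weakly injective in $C(\Mod_{\cO_Y})$ in the sense of [Sp],Prop.5.14, then quote Proposition 6.7 in [Sp]), and likewise on $Z$.

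Third, combining these two steps with the global Künneth formula of [Lip],Th.3.10.3, applied to $S=\cH om(I_1,I_2)$, $T=\cH om(J_1,J_2)$, produces a quasi-isomorphism
\[
\bR\Hom(F_1,F_2)\otimes \bR\Hom(G_1,G_2)\lto \bR\Gamma(\cH om(I_1\boxtimes J_1,I_2\boxtimes J_2)).
\]
Finally, the map $\sigma_*:\cH om(I_1\boxtimes J_1,I_2\boxtimes J_2)\to \cH om(I_1\boxtimes J_1,L)$ is a quasi-isomorphism (since $\sigma$ is), and the right hand side computes $\bR\Hom(F_1\boxtimes G_1,F_2\boxtimes G_2)$; taking global sections yields the quasi-isomorphism in a). For b), one specializes to $F_1=F_2$ and $G_1=G_2$ and observes that the whole chain is compatible with composition: each arrow above is induced either by the monoidal structure of $\boxtimes$ (Yoneda composition on tensor products) or by a map of complexes that does not interact with multiplication, so the composite is a morphism of DG algebras, not merely of complexes.

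The only real obstacle is the local Künneth step for the sheaves $\cH om(I_i,I_j)$: one needs the resolutions $I_i$ to be simultaneously good enough to compute $\cH om$ locally and to make the tensor product of sheaves behave correctly. This is exactly where the $D^b(\coh)$ hypothesis is used (to guarantee existence of bounded free and bounded injective resolutions on affines), and it is the step that must be written out carefully; the rest is bookkeeping identical to Proposition 6.17(b).
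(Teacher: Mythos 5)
Your treatment of part (a) is correct and is exactly the route the paper intends: the paper itself disposes of Proposition 6.20 with the single sentence ``using similar arguments we obtain the following result,'' referring back to the proof of Proposition 6.17(b), and your proposal generalizes that proof verbatim --- replacing the specific pairs $(D(E),D(E))$ and $(E,E)$ by arbitrary $(F_1,F_2)$ and $(G_1,G_2)$, and re-running the three ingredients (the local sheaf-$\cH om$ K\"unneth map of Lemma~6.18, the ``$\bR\Gamma=\Gamma$'' statement of Lemma~6.19, and Lipman's global K\"unneth formula) followed by composition with $\sigma_*$.

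Part (b), however, has a real gap. You assert that since each map in the chain is either induced by the monoidal structure of $\boxtimes$ or is ``a map of complexes that does not interact with multiplication,'' the composite is a morphism of DG algebras. That cannot be right as stated: the composite lands in $\Hom(I_1\boxtimes J_1,L)$, which is not a DG algebra (it is a bimodule, the source and target of $\Hom$ being different complexes), so it cannot be the target of a DG algebra morphism at all. The map $\Hom(I_1,I_1)\otimes\Hom(J_1,J_1)\to\Hom(I_1\boxtimes J_1,I_1\boxtimes J_1)$ is indeed a DG algebra homomorphism, but there is no reason for $\Hom(I_1\boxtimes J_1,I_1\boxtimes J_1)$ to compute $\bR\Hom(F_1\boxtimes G_1,F_1\boxtimes G_1)$ --- $I_1\boxtimes J_1$ is not h-injective, so one must pass to $\Hom(L,L)$, and there is no direct DG algebra map between $\Hom(I_1\boxtimes J_1,I_1\boxtimes J_1)$ and $\Hom(L,L)$. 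What the paper actually does at this point (both in the proof of 6.17(a) and 6.17(b)) is invoke Lemma~2.14: having established that $\sigma^*\colon\Hom(L,L)\to\Hom(I_1\boxtimes J_1,L)$ and the composite $\Hom(I_1,I_1)\otimes\Hom(J_1,J_1)\to\End(I_1\boxtimes J_1)\to\Hom(I_1\boxtimes J_1,L)$ are both quasi-isomorphisms of complexes, one forms the upper-triangular DG algebra
\[
\left(\begin{array}{cc}\Hom(L,L)&\Hom((I_1\boxtimes J_1)[1],L)\\ 0&\Hom(I_1,I_1)\otimes\Hom(J_1,J_1)\end{array}\right)
\]
and the two projections are quasi-isomorphisms of DG algebras, producing the required zigzag. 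Recall that in this paper ``quasi-isomorphism of DG algebras'' means a zigzag of genuine DG algebra quasi-isomorphisms, not a single map; the triangular trick is precisely how one manufactures the zigzag. You need to add this step --- ``the rest is bookkeeping identical to Proposition 6.17(b)'' would be fine if you had not first asserted the false claim that a single DG algebra morphism does the job.
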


\subsection{Concluding remarks on Theorem 6.3} Assume that the field $k$ is perfect.
 By Theorem 6.3 for a separated scheme of finite type $X$
 there exists a canonical (up to equivalence)
categorical resolution of singularities $D(X)\to D(A).$ It has the
flavor of Kozsul duality (Subsection 5.1) and may be called the
"inner" resolution. It has two notable properties: 1) The DG algebra
$A$ is derived equivalent to $A^{\op}$ (indeed, we can use a
classical generator $E$ for $D^b(cohX)$ or its dual $D(E)$); 2) $A$
usually has unbounded cohomology. In the forthcoming paper [Lu2] we
suggest a different type of a categorical resolution of $D(X):$ the
resolving smooth DG algebra has bounded cohomology, but is usually
not derived equivalent to its opposite.

\subsection{Some remarks on duality for noetherian
schemes}

\begin{defi} Let $\cD$ be a triangulated category. An object $M\in
\cD$ is called {\rm homologically} (resp. {\rm cohomologically})
{\rm finite} if for every $N\in \cD,$ $\Hom (M,N[i])=0$ for $\vert
i\vert  >>0$ (resp. $\Hom (N,M[i])=0$ for $\vert i\vert  >>0.$)
Denote by $\cD _{\hf}$ (resp. $\cD _{\chf}$) the full triangulated
subcategory of $\cD$ consisting of homologically (resp.
cohomologically) finite objects.
\end{defi}

\begin{defi} For a noetherian scheme $Y$ consider the bifunctor
$$\bR \cH om (-,-):D^b(cohY)^{\op}\times D^b(cohY)\to D^+(cohY).$$
We say that $F\in D^b(cohY)$ is {\rm locally homologically} (resp.
{\rm locally cohomologically}) {\rm finite} if $\bR \cH om(F,G)\in
D^b(cohY)$ (resp. $\bR \cH om(G,F)\in D^b(cohY)$) for all $G\in
D^b(cohY).$ Let $D^b(cohY)_{\lhf}$ (resp. $D^b(cohY)_{\lchf}$) be
the full subcategory of $D^b(cohY)$ consisting of locally
homologically (resp. locally cohomologically) finite objects.
\end{defi}

Let $Y$ be a noetherian scheme with a dualizing complex $D_Y\in
D^b(cohY).$ The duality equivalence
$$D(-)=\bR \cH om(-,D_Y):D^b(cohY)^{\op}\stackrel{\sim}{\to} D^b(cohY)$$
induces equivalences
$$D:D^b(cohY)^{\op}_{\hf}\stackrel{\sim}{\to}D^b(cohY)_{\chf},$$
$$D:D^b(cohY)^{\op}_{\lhf}\stackrel{\sim}{\to}D^b(cohY)_{\lchf}.$$

Denote by $\Fid (Y)\subset D^b(cohY)$ the full subcategory
consisting of complexes which are quasi-isomorphic to a finite
complex of injectives in $QcohX.$

\begin{lemma} Let $Y$ be a noetherian scheme with a dualizing complex,
$F\in D^b(cohY).$ Then the conditions a),b),c) are equivalent

a) $F\in \Perf (Y),$

b) $F\in D^b(cohY)_{\lhf},$

c) $F\in D^b(cohY)_{\hf}.$

Also the dual conditions d),e),f) are equivalent

d) $F\in \Fid (Y),$

e) $F\in D^b(cohY)_{\lchf},$

f) $F\in D^b(cohY)_{\chf}.$
\end{lemma}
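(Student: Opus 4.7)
The plan is to first observe that the duality $D(-) = \bR\cHom(-,D_Y)$ on $D^b(cohY)$ interchanges $\Perf(Y) \leftrightarrow \Fid(Y)$; locally, $D$ sends a bounded complex of free $\cO_Y$-modules of finite rank to a bounded complex whose terms are direct sums of shifts of $D_Y$, and $D_Y$ has finite injective dimension locally by definition of a dualizing complex. Combined with the identifications $\hf \leftrightarrow \chf$ and $\lhf \leftrightarrow \lchf$ recorded just above the statement, this shows that d), e), f) are the $D$-images of a), b), c) respectively. Hence it suffices to prove the chain a) $\Leftrightarrow$ b) $\Leftrightarrow$ c).

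For a) $\Rightarrow$ b), the assertion is local: on an affine open where $F$ is represented by a bounded complex of finite rank free sheaves, $\bR\cHom(F,G)$ is computed by a bounded complex of finite direct sums of shifts of $G$, and thus stays in $D^b(cohY)$ for any $G\in D^b(cohY)$. For b) $\Rightarrow$ c), a noetherian scheme admitting a dualizing complex is finite-dimensional, so $\bR\Gamma$ has finite cohomological amplitude on $D^b(cohY)$; applying this to $\bR\Hom(F,G) = \bR\Gamma(\bR\cHom(F,G))$ passes boundedness of the sheaf Hom to boundedness of the global Hom.

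The main content is c) $\Rightarrow$ a). I would test $F$ against skyscrapers at closed points: for each closed point $y \in Y$, the coherent sheaf $K_y = (i_y)_* k(y)$ lies in $D^b(cohY)$. By hypothesis $\bR\Hom_Y(F,K_y)$ is bounded, and since $K_y$ is supported at $y$ this equals $\bR\Hom_R(F_y,k(y))$ with $R = \cO_{Y,y}$. A standard local algebra fact --- for a bounded complex $F_y$ of finitely generated modules over a noetherian local ring $R$, boundedness of $\Ext^*_R(F_y,k(y))$ is equivalent to $F_y$ having finite projective dimension --- then implies that $F_y$ is perfect over $R$ at every closed point $y$. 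Since perfectness on a noetherian scheme is detected at closed stalks with a uniform bound on projective dimensions (furnished by the finite-dimensionality of $Y$), a truncation argument produces a bounded complex of finite rank locally free sheaves quasi-isomorphic to $F$, so $F \in \Perf(Y)$.

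The principal obstacle is the final globalization in c) $\Rightarrow$ a): converting pointwise finite projective dimension into an honest global bounded locally free resolution. Uniformity of the bound relies on finite Krull dimension of $Y$, a consequence of the existence of a dualizing complex; the actual resolution is then produced by truncation on affine opens. All other implications are routine manipulations of $\bR\cHom$, $\bR\Gamma$ and the duality $D$ as above.
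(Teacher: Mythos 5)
Your treatment of the chain a) $\Leftrightarrow$ b) $\Leftrightarrow$ c) is correct and essentially matches the paper's: for c) $\Rightarrow$ a) the paper also reduces to an affine open, takes a bounded-above free resolution, truncates, and detects freeness of the kernel term at a closed stalk from the vanishing of $\Ext^m(F,k(x))$ for $m\gg 0$. One small quibble: there is no ``uniform bound on projective dimensions furnished by the finite-dimensionality of $Y$'' --- for a non-regular local ring the projective dimension at a closed stalk is not controlled by the Krull dimension. Fortunately no uniformity is needed, since $\Perf(Y)$ is a local condition: once $F$ is perfect in a neighborhood of every closed point, it is perfect on $Y$ (closed points being dense in a noetherian scheme). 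You should drop the uniformity claim rather than lean on it.

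The real gap is in the reduction of d), e), f) to a), b), c). You assert that $D(-)=\bR\cHom(-,D_Y)$ interchanges $\Perf(Y)$ and $\Fid(Y)$ and conclude that d), e), f) are the $D$-images of a), b), c). But your local argument only establishes $D(\Perf(Y))\subseteq\Fid(Y)$: a bounded complex of free modules of finite rank dualizes into a bounded complex built from shifts of $D_Y$, and $D_Y$ has finite injective dimension. That is one inclusion. The converse inclusion $\Fid(Y)\subseteq D(\Perf(Y))$ --- equivalently, that a complex quasi-isomorphic to a bounded complex of quasi-coherent injectives dualizes to something locally of finite flat dimension --- is precisely the nontrivial point, and in fact it is the content of Corollary~6.24, which the paper \emph{derives from} this very lemma. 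Trying to use the full equivalence $\Perf(Y)\simeq\Fid(Y)$ as an input here is therefore circular. If you only have the one inclusion plus a) $\Leftrightarrow$ b) $\Leftrightarrow$ c) and the two given dualities $D:\hf\leftrightarrow\chf$, $D:\lhf\leftrightarrow\lchf$, you obtain $\chf=\lchf\subseteq\Fid$ (i.e.\ f) $\Rightarrow$ e) $\Rightarrow$ d)), but the implication d) $\Rightarrow$ e) (or f)) is not reached. The paper fills exactly this hole by citing Hartshorne's Residues and Duality II, Prop.~7.20 for d) $\Leftrightarrow$ e), and then gets f) $\Rightarrow$ e) by applying $D$ together with c) $\Rightarrow$ b). You need to supply a comparable argument --- either invoke Hartshorne's proposition directly for d) $\Rightarrow$ e), or give a local proof that a bounded complex with finite injective dimension is $\lchf$ (using that on a noetherian scheme quasi-coherent injectives are injective among all $\cO_Y$-modules, so a bounded injective resolution computes $\bR\cHom$, and that the $\cExt$ sheaves of a pair of bounded coherent complexes are coherent).
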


\begin{proof} It is obvious that $a)\Rightarrow b)\Rightarrow c).$

Assume that $F\in D^b(cohY)_{hf}.$ Let $U=SpecC$ be an open affine
subscheme of $Y.$ Then $C$ is a noetherian $k$-algebra. Choose a
bounded above complex $P=...\to P^n\stackrel{d^n}{\to}P^{n+1}\to
...$ of free $C$-modules of finite rank which is quasi-isomorphic to
$F\vert _U.$ Then for $n<<0$ the truncation
$$\tau _{\geq n}P=0\to \Ker d^n \to P^n\to P^{n+1}\to ...$$
is also quasi-isomorphic to $F\vert _U.$ Let $x\in U$ be a closed
point with the residue field $k(x).$ Since $\Ext ^m(F,k(x))=0$ for
$m>>0,$ this implies that  $\Ext ^{>0}_C(\Ker d^n,k(x))=0.$ Hence
the $C$-module $\Ker d^n$ is free at $x$ for $n>>0.$ Hence it is
free in an open neighborhood of $x.$ So $F\in \Perf (Y).$

Again the implications $d)\Rightarrow e)\Rightarrow f)$ are clear.
Actually $d)\Leftrightarrow e)$ by [Ha2],II,Prop.7.20. It remains to
prove that $f)\Rightarrow e).$ Let $F\in D^b(cohY)_{chf}.$ Then
$D(F)\in D^b(cohY)_{hf},$ so also $D(F)\in D^b(cohY)_{lhf}$ by
$c)\Rightarrow b).$ But then $D(D(F))=F\in F\in D^b(cohY)_{lchf}.$
\end{proof}

\begin{cor} In the above notation the duality functor induces an
equivalence $D:\Perf (Y)^{\op}\stackrel{\sim}{\to} \Fid (Y).$
\end{cor}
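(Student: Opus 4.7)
The plan is to assemble three ingredients already established in the excerpt; the corollary is essentially a bookkeeping statement once Lemma 6.24 is in place.

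First, recall the duality equivalence stated in the paragraph preceding Lemma 6.24: the functor $D(-) = \bR \cH om(-, D_Y)$ is an anti-involution of $D^b(\coh Y)$, and it restricts to an equivalence
\[
D : D^b(\coh Y)^{\op}_{\hf} \stackrel{\sim}{\to} D^b(\coh Y)_{\chf}.
\]
The key observation behind this (which I would spell out if not assumed) is that for $F, N \in D^b(\coh Y)$ one has natural isomorphisms $\Hom(D(F), N[i]) \simeq \Hom(D(N[i]), F) = \Hom(N, F[i])$, using that $D \circ D \simeq \id$, so the hf condition on $F$ translates directly into the chf condition on $D(F)$.

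Second, Lemma 6.24 identifies $\Perf(Y) = D^b(\coh Y)_{\hf}$ (by the equivalence $a) \Leftrightarrow c)$) and $\Fid(Y) = D^b(\coh Y)_{\chf}$ (by $d) \Leftrightarrow f)$). Substituting these identifications into the restricted duality equivalence displayed above yields precisely an equivalence $D : \Perf(Y)^{\op} \stackrel{\sim}{\to} \Fid(Y)$, which is the statement of the corollary.

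There is no real obstacle: the work has already been done in Lemma 6.24, whose nontrivial content lies in the implications $c) \Rightarrow a)$ (using the noetherian truncation argument with the residue field $k(x)$) and $f) \Rightarrow e)$ (bootstrapped from $c) \Rightarrow b)$ via a second application of $D$). Given these, the corollary follows by pure formalism from the fact that the contravariant functor $D$ swaps the hf and chf conditions.
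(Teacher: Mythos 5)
Your proof is correct and follows exactly the paper's route: the duality $D$ interchanges $D^b(\coh Y)_{\hf}$ with $D^b(\coh Y)_{\chf}$, and Lemma 6.23 identifies these with $\Perf(Y)$ and $\Fid(Y)$ respectively. One small slip in your explanatory aside: the chain $\Hom(D(F),N[i])\simeq\Hom(D(N[i]),F)$ should continue as $\simeq\Hom(D(N),F[i])$ rather than $\Hom(N,F[i])$ (using $D(N[i])=D(N)[-i]$); the conclusion still holds because $D$ is an equivalence, so $D(N)$ ranges over all of $D^b(\coh Y)$ as $N$ does.
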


\begin{proof} This follows from Lemma 6.23.
\end{proof}

Recall that a noetherian scheme $Y$ is called Gorenstein, if all its
local rings are Gorenstein local rings. Then $Y$ is Gorenstein if
and only if $\cO _Y$ is a dualizing complex on $Y$ [Ha2].

\begin{lemma} A noetherian scheme $Y$
is Gorenstein if and only if $\Perf (Y)=\Fid (Y).$
\end{lemma}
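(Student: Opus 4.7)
The argument splits naturally along the two implications, and the key mechanism is Corollary 6.24, which supplies an equivalence $D(-)=\bR\cH om(-,D_Y):\Perf(Y)^{\op}\stackrel{\sim}{\to}\Fid(Y)$ whenever $Y$ carries a dualizing complex $D_Y$. The plan is to apply this in the Gorenstein case with the canonical choice $D_Y=\cO_Y$, and to handle the converse by a purely local observation.

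For the $(\Leftarrow)$ direction, the plan is to start from $\cO_Y\in\Perf(Y)=\Fid(Y)$ and unwind the definition of $\Fid(Y)$. Being quasi-isomorphic to a bounded complex of injectives in $\Qcoh Y$ is a local property, so at each point $y$ the stalk $\cO_{Y,y}$ has a finite injective resolution over itself, i.e.\ finite self-injective dimension. This is precisely the definition of a Gorenstein local ring, so $Y$ is Gorenstein. No duality input is needed here.

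For the $(\Rightarrow)$ direction, the plan is to invoke the quoted characterization that when $Y$ is Gorenstein the structure sheaf $\cO_Y$ itself serves as a dualizing complex, and then to take $D_Y=\cO_Y$ in Corollary 6.24. The inclusion $\Perf(Y)\subseteq\Fid(Y)$ follows directly: since $\cO_Y\in\Fid(Y)$ and the property of having finite injective dimension is local and stable under finite direct sums, shifts and cones, any complex locally quasi-isomorphic to a bounded complex of finite-rank free $\cO_Y$-modules lies in $\Fid(Y)$. For the reverse inclusion $\Fid(Y)\subseteq\Perf(Y)$, the decisive observation is that with $D_Y=\cO_Y$ the functor $D=\bR\cH om(-,\cO_Y)$ preserves $\Perf(Y)$: locally, dualizing a bounded complex of finite-rank free modules against $\cO_Y$ again produces a bounded complex of finite-rank free modules. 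Combining this with Corollary 6.24, which identifies $\Fid(Y)$ with $D(\Perf(Y)^{\op})$, one gets $\Fid(Y)=D(\Perf(Y)^{\op})\subseteq\Perf(Y)$, completing the proof.

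There is no real obstacle; the only subtlety is recognizing that the content of Corollary 6.24 must be combined with the specific fact that $D$ \emph{restricts} to an endofunctor of $\Perf(Y)$ when $D_Y=\cO_Y$, rather than merely landing in $\Fid(Y)$. This extra preservation property is what upgrades the abstract equivalence $\Perf^{\op}\simeq\Fid$ into the equality $\Perf=\Fid$ in the Gorenstein setting.
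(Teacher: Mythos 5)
Your proof is correct. The forward direction is essentially the same as the paper's: both take $D_Y=\cO_Y$ in Corollary 6.24 and use that $\bR\cH om(-,\cO_Y)$ is an anti-auto-equivalence of $\Perf(Y)$ (locally, dualizing a bounded complex of finite free modules against $\cO_Y$ gives a bounded complex of finite free modules), so the equivalence $\Perf(Y)^{\op}\stackrel{\sim}{\to}\Fid(Y)$ of the corollary forces $\Perf(Y)=\Fid(Y)$. The converse is where you diverge from the paper. You argue directly: $\cO_Y\in\Fid(Y)$ restricts to each affine open $\Spec A$ to say $A$ has finite injective dimension as an $A$-module, and then localizing (which preserves injectives over noetherian rings) gives that each $\cO_{Y,y}$ has finite self-injective dimension, i.e.\ is Gorenstein. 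The paper instead notes that $\cO_Y\in\Fid(Y)$ together with the tautological isomorphism $\bR\cH om(\cO_Y,\cO_Y)=\cO_Y$ shows $\cO_Y$ is a dualizing complex by [Ha2], Ch.V, Prop.~2.1, and then invokes the already-quoted characterization ``$Y$ is Gorenstein iff $\cO_Y$ is a dualizing complex.'' The two routes are equivalent: the paper's is shorter because it outsources the local commutative algebra to Hartshorne's dualizing-complex criterion, while yours is more self-contained, working directly from the local definition of Gorenstein. One minor point to be explicit about: in your local argument you should note that on a noetherian scheme injective quasi-coherent sheaves restrict to injectives on affine opens and correspond there to injective modules, and that over noetherian rings localization preserves injectivity — these are the facts that make ``$\Fid$ is a local property'' precise. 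With that said, the argument is sound.
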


\begin{proof} The functor $\bR \cH om (-,\cO _Y):D^b(cohY)^{\op}\to
D^+(cohY)$ induces an equivalence $\Perf (Y)^{\op}\to \Perf (Y).$ So
if $Y$ is Gorenstein then $\Perf (Y)=\Fid (Y)$ by Corollary 6.24.

Conversely if $\Perf (Y)=\Fid (Y)$ then in particular $\cO _Y\in
\Fid(Y).$ In any case $\bR \cH om(\cO_Y,\cO _Y)=\cO _Y,$ so $\cO _Y$
is a dualizing complex on $Y$ by [Ha2],Ch.V,Prop.2.1.
\end{proof}

\subsection{Canonical categorical resolution as a mirror which switches "perfect" and "bounded"}

Let the field $k$ be perfect and $X$ be a separated $k$-scheme  of
finite type with a dualizing complex $D_X\in D^b(cohX).$

Choose a classical generator $E\in D^b(cohX)$ and denote the
corresponding equivalence
$$\Psi (-):=\bR \Hom (E,-):D(cohX)\to \Perf (A),$$
where $A=\bR \Hom (E,E)$ (Theorem 6.3). Consider also the
equivalence
$$\Psi \cdot D(-)=\bR \Hom (E,\bR \cH om (-,D_X)):D(cohX)^{\op}\to
\Perf (A).$$

\begin{defi} A DG $A$-module $M$ is called {\rm bounded} if
$H^i(M)=0$ for $\vert i\vert >>0.$ Denote by $D^b(A)\subset D(A)$
the full subcategory consisting of bounded DG modules. Put $\Perf
(A)^b=\Perf (A)\cap D^b(A).$
\end{defi}

\begin{prop}
a) The functor $\Psi$ induces an equivalence $\Fid
(X)\stackrel{\sim}{\to} \Perf (A)^b;$

b) The composition $\Psi \cdot D$ induces an equivalence $\Perf
(X)^{\op}\simeq \Perf (A)^b.$
\end{prop}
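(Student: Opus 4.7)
The plan is to deduce part (b) from part (a) together with Corollary 6.24, and to prove part (a) by matching the boundedness conditions across the equivalence $\Psi$. Specifically, Corollary 6.24 supplies the anti-equivalence $D\colon \Perf(X)^{\op}\stackrel{\sim}{\to}\Fid(X)$, and composing it with the equivalence $\Psi|_{\Fid(X)}\colon \Fid(X)\stackrel{\sim}{\to}\Perf(A)^b$ given by (a) yields $\Psi\cdot D\colon \Perf(X)^{\op}\stackrel{\sim}{\to}\Perf(A)^b$. So everything reduces to proving (a).

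For part (a), recall from Theorem 6.3 that $\Psi$ restricts to an equivalence $D^b(cohX)\stackrel{\sim}{\to}\Perf(A)$ with $\Psi(E)=A$. Since $\Fid(X)\subset D^b(cohX)$, I only need to identify, inside this equivalence, the subcategory $\Fid(X)$ with $\Perf(A)^b$. The bridge is the identification, for every $F\in D^b(cohX)$ and $M:=\Psi(F)$,
$$H^i(M)=\Hom_{D(A)}(A,M[i])=\Hom_{D(X)}(E,F[i])=\Ext^i(E,F),$$
which follows immediately from $\Psi$ being an equivalence sending $E$ to $A$. Consequently, $\Psi(F)\in\Perf(A)^b$ if and only if $\Ext^i(E,F)=0$ for $|i|\gg 0$.

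The heart of the argument is to check that this last condition characterizes the objects of $\Fid(X)$. By Lemma 6.23, $\Fid(X)=D^b(cohX)_{\chf}$, so I must show that $\Ext^i(E,F)=0$ for $|i|\gg 0$ is equivalent to $\Hom(G,F[i])=0$ for $|i|\gg 0$ (with a bound depending on $G$) for every $G\in D^b(cohX)$. One direction is immediate, by taking $G=E$. For the converse I will invoke that $E$ is a classical generator of $D^b(cohX)$: every $G$ lies in $\langle E\rangle_{d_G}$ for some $d_G$, hence is a direct summand of an iterated extension of finitely many shifts $E[k_1],\dots,E[k_m]$. Since $\Ext^i(E[k_j],F)=\Ext^{i-k_j}(E,F)$ vanishes for $|i-k_j|\gg 0$, this vanishing passes through cones and direct summands to yield the required vanishing for $G$ in an explicit range depending on $\max_j|k_j|$.

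The main (though mild) obstacle is to remember that Lemma 6.23 only requires the vanishing threshold to depend on $G$, not to be uniform; this is exactly why classical generation suffices and one does not need to appeal to the strong generator statement of Theorem 6.1. Apart from this bookkeeping, the whole proof is essentially formal once Theorem 6.3, Corollary 6.24, and Lemma 6.23 are in hand.
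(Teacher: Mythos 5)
Your proof is correct and follows essentially the same route as the paper: reduce (b) to (a) via Corollary~6.24, then prove (a) by transporting the boundedness of cohomology across $\Psi$ via $H^i(\Psi(F))\cong\Ext^i(E,F)$ and using that $E$ classically generates $D^b(\coh X)$ to pass from the condition on $E$ to the condition on arbitrary $G$, concluding with Lemma~6.23. You simply spell out more explicitly the identification $H^i(\Psi(F))\cong\Ext^i(E,F)$ and the propagation of the vanishing through cones, shifts and summands, which the paper leaves implicit.
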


\begin{proof} a). Clearly $\Psi (\Fid(X))\subset \Perf (A)^b.$ Vice
versa, assume that $\Psi (G)\in \Perf (A)^b$ for some $G\in
D^b(cohX).$ Since $E$ is a classical generator for $D^b(cohX)$ the
complex $\bR \Hom (F,G)$ has bounded cohomology for all $F\in
D^b(cohX).$ That is $G\in D^b(cohX)_{chf}.$ But then $F\in \Fid (X)$
by Lemma 6.23.

b). Follows from a) and Corollary 6.24.
\end{proof}

Recall the triangulated category of singularities
$D_{\sg}(X)=D^b(cohX)/\Perf (X)$ ([Or]).

\begin{cor} The functor $\Psi \cdot D$ induces an equivalence
$$D_{\sg}(X)^{\op}\simeq \Perf (A)/\Perf (A)^b.$$
\end{cor}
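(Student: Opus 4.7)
The plan is to get this corollary essentially for free from Proposition 6.27(b), by taking Verdier quotients on both sides of the equivalence already in hand.

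First I would observe that Theorem 6.3 gives that $\Psi=\bR\Hom(E,-)$ restricts to an equivalence $D^b(\coh X)\stackrel{\sim}{\to}\Perf(A)$, and the duality $D(-)=\bR\cH om(-,D_X)$ is an anti-equivalence of $D^b(\coh X)$ with itself. Composing, $\Psi\cdot D$ is an equivalence
$$\Psi\cdot D\colon D^b(\coh X)^{\op}\stackrel{\sim}{\to}\Perf(A).$$
By Proposition 6.27(b) this equivalence takes the triangulated subcategory $\Perf(X)^{\op}$ onto the triangulated subcategory $\Perf(A)^b\subset\Perf(A)$.

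Next I would pass to Verdier quotients. Since $\Psi\cdot D$ is a triangulated equivalence sending $\Perf(X)^{\op}$ isomorphically onto $\Perf(A)^b$, the induced functor on Verdier quotients is again an equivalence:
$$D^b(\coh X)^{\op}/\Perf(X)^{\op}\stackrel{\sim}{\to}\Perf(A)/\Perf(A)^b.$$
Finally I would identify the left-hand side with $D_{\sg}(X)^{\op}$ using the standard fact that $(\mathcal{T}/\mathcal{S})^{\op}=\mathcal{T}^{\op}/\mathcal{S}^{\op}$ for a thick subcategory $\mathcal{S}\subset\mathcal{T}$, together with the definition $D_{\sg}(X)=D^b(\coh X)/\Perf(X)$ from [Or]. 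Combining these two identifications yields the desired equivalence $D_{\sg}(X)^{\op}\simeq\Perf(A)/\Perf(A)^b$.

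There is essentially no obstacle here; the content is packaged into Proposition 6.27(b) and the Verdier-quotient formalism. The only minor point to be careful about is that $\Perf(X)$ and $\Perf(A)^b$ are indeed thick subcategories (closed under direct summands), so the Verdier quotients $D_{\sg}(X)$ and $\Perf(A)/\Perf(A)^b$ are honest triangulated categories rather than merely their Karoubi completions; this is standard for $\Perf(X)\subset D^b(\coh X)$, and for $\Perf(A)^b\subset \Perf(A)$ it follows by transporting thickness across the equivalence of Proposition 6.27(b).
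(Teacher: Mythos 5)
Your proof is correct and follows exactly the route the paper intends: the paper leaves Corollary 6.28 without an explicit proof because it is the immediate consequence of Proposition 6.27(b) together with the functoriality of Verdier quotients, which is precisely what you have written out. Your remark that $\Perf(X)$ and $\Perf(A)^b$ are thick is a reasonable point of care, though in this setting it is automatic since both are identified under a triangulated equivalence.
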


\begin{cor} Assume that $X$  Gorenstein. Then in the context of Proposition
6.27 the functor  $\Psi $ induces an equivalence $\Perf (X)\to \Perf
(A)^b.$ Hence in particular $D_{\sg}(X)\simeq \Perf (A)/\Perf
(A)^b.$
\end{cor}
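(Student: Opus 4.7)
The plan is to deduce this corollary as an almost immediate consequence of Proposition 6.27(a), Lemma 6.25, and the definition of the triangulated category of singularities. First I would invoke Lemma 6.25, which characterizes the Gorenstein property by the equality $\Perf(X) = \Fid(X)$ as full subcategories of $D^b(\coh X)$. Then, by Proposition 6.27(a), the functor $\Psi$ already restricts to an equivalence $\Fid(X) \stackrel{\sim}{\to} \Perf(A)^b$. Replacing $\Fid(X)$ by $\Perf(X)$ via the Gorenstein hypothesis yields the first assertion: $\Psi$ induces an equivalence $\Perf(X) \stackrel{\sim}{\to} \Perf(A)^b$.

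For the second assertion I would appeal to the fact, from Theorem 6.3, that $\Psi$ induces an equivalence of triangulated categories $D^b(\coh X) \stackrel{\sim}{\to} \Perf(A)$. By the first part, this equivalence carries the thick triangulated subcategory $\Perf(X) \subset D^b(\coh X)$ onto $\Perf(A)^b \subset \Perf(A)$. Passing to Verdier quotients, which is functorial with respect to triangulated equivalences sending thick subcategories to thick subcategories, produces an induced equivalence
\[
D^b(\coh X)/\Perf(X) \stackrel{\sim}{\longrightarrow} \Perf(A)/\Perf(A)^b.
\]
Since $D_{\sg}(X)$ is defined as $D^b(\coh X)/\Perf(X)$ (following Orlov), this is exactly the stated equivalence $D_{\sg}(X) \simeq \Perf(A)/\Perf(A)^b$.

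There is no serious obstacle here; the corollary is essentially a repackaging. All the real content sits upstream, in the equivalence $\Fid(X) \simeq \Perf(A)^b$ of Proposition 6.27(a) (whose proof relies on the characterization of $\Fid(X)$ via cohomological finiteness in Lemma 6.23) and in Lemma 6.25's characterization of Gorenstein schemes. The only point worth being careful about is that the matching of thick subcategories $\Perf(X) \leftrightarrow \Perf(A)^b$ under $\Psi$ is already a genuine equivalence of triangulated subcategories, not merely an inclusion — but this is built into the first part of the corollary.
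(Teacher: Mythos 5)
Your proof is correct and follows the same route as the paper: identify $\Perf(X)$ with $\Fid(X)$ via Lemma 6.25 (the Gorenstein characterization), apply Proposition 6.27(a), and pass to Verdier quotients using the equivalence $D^b(\coh X)\simeq\Perf(A)$ from Theorem 6.3. The paper's own proof is just a terser version of the same argument, leaving the Verdier-quotient step implicit.
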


\begin{proof} Since $X$ is Gorenstein $\Perf (X)=\Fid (X).$ Hence
the corollary follows from Proposition 6.27a).
\end{proof}

\subsection{Connection with the stable derived category of a locally
noetherian Grothendieck category} Let $\cA$ be a locally noetherian
Grothendieck category such that its derived category $D(\cA)$ is
compactly generated. Denote by $\noeth \cA \subset \cA$ the full
subcategory of noetherian objects. Let $\Inj \cA \subset \cA$ be the
full subcategory of injective objects and consider its homotopy
category $K(\Inj \cA):=Ho (\Inj \cA).$ Let $S(\cA) \subset K (\Inj
\cA)$ be the full triangulated category of acyclic complexes. In
[Kr] the following assertions were proved:

1) The natural diagram of triangulated categories and exact functors
$$S(\cA)\stackrel{I}{\lto} K(\Inj \cA)\stackrel{Q}{\lto} D(\cA)$$
is a localization sequence, in particular $D(\cA)\simeq K(\Inj
\cA)/S(\cA).$

2) The functors $I,Q$ have left adjoints $I_\lambda ,Q_\lambda$ and
right adjoints $I_\rho ,Q_\rho$ respectively.

3) The category $K(\Inj \cA)$ is cocomplete and compactly generated.

4) The functor $Q$ induces an equivalence of categories
$$K(\Inj \cA )^c\stackrel{\sim}{\lto} D^b(\noeth \cA)$$
with the quasi-inverse being induced by $Q_\rho.$

In [Kr] the category $S(\cA)$ is called the stable derived category
of $\cA$ and Krause suggests a deeper study of the category $K(\Inj
\cA).$

Let $k$ be a perfect field and $X$ be a separated $k$-scheme of
finite type. The Grothendieck category $\cA =Qcoh X$ is locally
noetherian with $\noeth \cA =coh X.$ The derived category
$D(X)=D(\cA)$ is compactly generated (Theorem 2.10). We denote $\Inj
X=\Inj \cA,$ $K(\Inj X)=K(\Inj \cA),$ $S(X)=S(\cA).$ So we obtain
the localization sequence
$$S(X)\stackrel{I}{\lto} K(\Inj X)\stackrel{Q}{\lto} D(X).$$

\begin{prop} The pair $(K(\Inj X),Q_{\rho})$ is a categorical
resolution of $D(X)$ which is equivalent to the canonical resolution
constructed in Theorem 6.3. In particular the category $K(\Inj X)$
is smooth.
\end{prop}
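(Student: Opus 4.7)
The plan is to transport the canonical resolution of Theorem 6.3 into $K(\Inj X)$ by identifying the latter with $D(A)$ via a compact generator, and then to check that this identification converts $Q_\rho$ into the functor $\bR\Hom(E,-)$.

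First I would fix a classical generator $E\in D^b(\coh X)$ as in Proposition 6.8, set $A=\bR\Hom(E,E)$ (smooth by Theorem 6.3), and choose an h-injective resolution $E\to E'$; since $E$ is bounded, $E'$ can be taken to be a bounded-below complex of injectives, hence an object of $K(\Inj X)$. Under Krause's equivalence $Q:K(\Inj X)^c\stackrel{\sim}{\lto} D^b(\coh X)$ the object $E'$ corresponds to $E$ and is therefore a compact generator of $K(\Inj X)$; likewise $Q_\rho E\simeq E'$. The category $C(\Inj\cA)$ with its degreewise-split admissible exact structure is Frobenius with stable category $K(\Inj X)$, so Theorem 2.8 together with the argument of Proposition 2.6 applied to the compact generator $E'$ produces an equivalence
$$\Phi\ :=\ \bR\Hom(E',-)\ :\ K(\Inj X)\ \stackrel{\sim}{\lto}\ D(A),$$
using that $\End_{C^{\dg}(\Qcoh X)}(E')$ simultaneously computes $\bR\Hom_{K(\Inj X)}(E',E')$ and $A$ because $E'$ is h-injective. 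In particular, by Definition 3.16(c), the category $K(\Inj X)$ is smooth.

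Second I would compute the composition $\Phi\circ Q_\rho$. For $M\in D(X)$, choose an h-injective resolution $M\to M'$, so $Q_\rho M\simeq M'$; then, functorially in $M$,
$$\Phi(Q_\rho M)\ =\ \Hom_{C^{\dg}(\Qcoh X)}(E',M')\ \simeq\ \bR\Hom_{D(X)}(E,M)$$
as DG $A$-modules. Thus $\Phi\circ Q_\rho$ agrees with the canonical resolution functor of Theorem 6.3, so its restriction to $\Perf(X)$ is full and faithful. Since $Q_\rho$ is itself fully faithful on all of $D(X)$ (being right adjoint to a Verdier localization with $QQ_\rho\simeq\id$), the pair $(K(\Inj X),Q_\rho)$ is a categorical resolution, and the DG bimodule implementing $\Phi$ exhibits it as equivalent to the canonical one in the sense of Definition 4.3.

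The main obstacle is the DG-enhancement bookkeeping: one must check that the DG algebra produced by applying Theorem 2.8 to Krause's Frobenius structure on $C(\Inj\cA)$ lies in the derived-equivalence class of $A$ (not merely yields an abstractly equivalent triangulated category), and that $\Phi$ is realized by an honest DG $A^{\op}\otimes A$-bimodule so that Definition 4.3 applies literally. Both points go through by running the proofs of Proposition 2.6 and Remark 2.7 verbatim with $E'$ in place of their $E$, since $E'$ is simultaneously an h-injective representative of $E\in D(X)$ and a compact generator of $K(\Inj X)$.
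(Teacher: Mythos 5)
Your proof takes essentially the same route as the paper: both identify $K(\Inj X)$ with $D(A)$ by choosing an h-injective representative $E'$ of the classical generator $E$ of $D^b(\coh X)$, observing that $E'$ is simultaneously a compact generator of $K(\Inj X)$ and an h-injective model of $E$ so that $\End_{C^{\dg}}(E')$ is the same DG algebra $A$ in both roles, and then running the argument of Proposition 2.6 to get the equivalence $\Phi=\Hom(E',-):K(\Inj X)\to D(A)$ (the paper's $\Psi'_E$), after which $\Phi\circ Q_\rho$ is seen to be the canonical resolution functor of Theorem~6.3. The only cosmetic difference is that the paper packages the identity $\Phi\circ Q_\rho\simeq\Psi_E$ by identifying $D(X)$ with the subcategory $K_{\Inj}(X)\subset K(\Inj X)$ so that $Q_\rho$ becomes the inclusion, whereas you verify it pointwise; this is the same computation.
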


\begin{proof} Let $K_{\Inj }(X)\subset K(\Inj X)$ be the full
subcategory consisting of h-injective complexes (with injective
components). The functor $Q$ induces an equivalence $K_{\Inj
}(X)\stackrel{\sim}{\to }D(X).$ Its quasi-inverse (="taking
h-injective resolution") composed with the inclusion
$K_{\Inj}(X)\subset K(\Inj X)$ is the functor $Q_{\rho}.$ Thus we
may identify $D(X)$ with $K_{\Inj}(X).$ Hence by 4) above the
category $D^b(cohX)$ is identified with $K(\Inj X)^c.$

Let $E\in K_{\Inj}(X)$ be a classical generator of $D^b(cohX),$
hence also of $K(\Inj X)^c;$ put $A=\Hom (E,E).$ By Theorem 6.3 the
functor
$$\Psi _E:K_{\Inj}(X)\to D(A),\quad \Psi _E(-)=\Hom (E,-)$$
is a categorical resolution. So it suffices to prove that the
functor $\Psi ^\prime _E:K(\Inj X)\to D(A)$ defined by the same
formula is an equivalence.

We know that the category $K(\Inj X)$ is cocomplete. Hence by
Theorem 2.2 b) $E\in K(\Inj X)^c$ is a compact generator for $K(\Inj
X).$ Now one shows that $\Psi ^\prime _E$ is an equivalence by
copying the proof of Proposition 2.6.
\end{proof}

We thank Michel Van den Bergh for pointing to us the connection
between our categorical resolution of $D(X)$ and the category
$K(\Inj X).$

\medskip

\noindent{\bf Question.} For which locally noetherian Grothendieck
categories $\cA$ (such that $D(\cA)$ is compactly generated) the
category $K(\Inj \cA)$ is smooth (hence a categorical resolution of
$D(\cA)$)?

\end{document}